\documentclass{article}
\title{Patnaik-Pearson intrinsic dimension for internal representations of neural networks}
\date{\today}
\author{Tom Hadfield\\
    Department of Mathematics, Imperial College London\\
	180 Queen's Gate, London SW7 5HF, United Kingdom\\
	t.hadfield@imperial.ac.uk
}
\date{\today}

\usepackage{amsmath, amssymb, bm}
\usepackage{graphicx}  % in preamble
\newtheorem{theorem}{Theorem}[section]
\newtheorem{lemma}{Lemma}[section]

\newtheorem{corollary}{Corollary}[section]
\newtheorem{conjecture}{Conjecture}[section]
\newtheorem{definition}{Definition}[section]

\DeclareMathOperator*{\bbA}{\mathbb{A}}

\DeclareMathOperator*{\CC}{\mathbb{C}}

\DeclareMathOperator*{\EE}{\mathbb{E}}

\DeclareMathOperator*{\PP}{\mathbb{P}}

\DeclareMathOperator*{\RR}{\mathbb{R}}
\DeclareMathOperator*{\RV}{\mathrm{RV}}
\DeclareMathOperator*{\Var}{\mathrm{Var}}
\DeclareMathOperator*{\Cov}{\mathrm{Cov}}
\DeclareMathOperator*{\Corr}{\mathrm{Corr}}

\DeclareMathOperator*{\dist}{\mathrm{dist}}
\DeclareMathOperator*{\Pareto}{\mathrm{Pareto}}
\DeclareMathOperator*{\secondmin}{\mathrm{secondmin}}

\DeclareMathOperator*{\rto}{\rightarrow}

\DeclareMathOperator*{\CDF}{\mathrm{CDF}}

\DeclareMathOperator*{\bfxi}{{\bf x}_i}
\DeclareMathOperator*{\bfxone}{{\bf x}_1}
\DeclareMathOperator*{\bfxtwo}{{\bf x}_2}
\DeclareMathOperator*{\bfxN}{{\bf x}_N}

\DeclareMathOperator*{\bb}{{\bf b}}

\DeclareMathOperator*{\ReLU}{\mathrm{ReLU}}
\DeclareMathOperator*{\Attention}{\mathrm{Attention}}
\DeclareMathOperator*{\Rbar}{r} % redefine
\DeclareMathOperator*{\xsubi}{{\bf{x}}_i}
\DeclareMathOperator*{\xsubj}{{\bf{x}}_j}

\DeclareMathOperator*{\xsubone}{{\bf{x}}_1}

\DeclareMathOperator*{\xsubN}{{\bf{x}}_N}
\DeclareMathOperator*{\xoverline}{\overline{\bf{x}}}

\DeclareMathOperator*{\blambda}{\bm{\lambda}}
\DeclareMathOperator*{\blambdaK}{\bm{\lambda_K}}

\DeclareMathOperator*{\bphi}{\bm{\phi}}
\DeclareMathOperator*{\bphiK}{\bm{\phi_K}}
\DeclareMathOperator*{\bpsi}{\bm{\psi}}

\DeclareMathOperator*{\TwoNN}{{\mathrm{TwoNN}}}
\DeclareMathOperator*{\PatnaikPearson}{\mathrm{PP}}
\DeclareMathOperator*{\ID}{\mathrm{ID}}
\DeclareMathOperator*{\MP}{\mathrm{MP}}
\DeclareMathOperator*{\rhoemp}{\rho_{\mathrm{emp}}}
\DeclareMathOperator*{\rhotail}{\rho_{\mathrm{tail}}}
\DeclareMathOperator*{\Xresid}{X_{\mathrm{resid}}}
\DeclareMathOperator*{\normXresid}{|| \Xresid ||}
\DeclareMathOperator*{\normalised}{\mathrm{normalised}}

\DeclareMathOperator*{\HTSR}{\mathrm{HTSR}}
\DeclareMathOperator*{\half}{{\tfrac{1}{2}}}
\DeclareMathOperator*{\nuinfty}{\nu_\infty}
\DeclareMathOperator*{\softmax}{\mathrm{softmax}}
\DeclareMathOperator*{\argmax}{\mathrm{argmax}}
\DeclareMathOperator*{\stablerank}{\mathrm{sr}}
\DeclareMathOperator*{\diag}{\mathrm{diag}}

\DeclareMathOperator*{\Tr}{\mathrm{Tr}}
\DeclareMathOperator*{\MultiHead}{\mathrm{MultiHead}}
\DeclareMathOperator*{\LayerNorm}{\mathrm{LayerNorm}}
\DeclareMathOperator*{\FFN}{\mathrm{FFN}}
\DeclareMathOperator*{\Concat}{\mathrm{Concat}}
\DeclareMathOperator*{\head}{\mathrm{head}}
\DeclareMathOperator*{\otherwise}{\mathrm{otherwise}}
\DeclareMathOperator*{\sff}{\mathrm{sf}}
\DeclareMathOperator*{\fracEOneSqrdOverETwo}{{\frac{e_1^2}{e_2}}}
\DeclareMathOperator*{\fracTwoSigmaYoverEOne}{{\tfrac{2 \sigma(Y)}{e_1}}}
\DeclareMathOperator*{\fracSigmaYSqrdOverETwo}{{\tfrac{\sigma(Y^2)}{e_2}}}
\DeclareMathOperator*{\fracBigOOneOverD}{O({\tfrac{1}{d}})}
\DeclareMathOperator*{\formulaOne}{\fracTwoSigmaYoverEOne Z_1 - \fracSigmaYSqrdOverETwo Z_2}
\DeclareMathOperator*{\formulaTwo}{\left( \formulaOne \right)}
\DeclareMathOperator*{\formulaThree}{{\tfrac{1}{\sqrt{d}}} \formulaTwo}
\DeclareMathOperator*{\formulaFour}{1 + \formulaThree + \fracBigOOneOverD}
\DeclareMathOperator*{\formulaFive}{\fracEOneSqrdOverETwo \left[ \formulaFour \right]}
\DeclareMathOperator*{\pdka}{p(d,K,\alpha)}
\DeclareMathOperator*{\pdkab}{p(d,K,\alpha,\beta)}

\DeclareMathOperator*{\myqed}{\hfill\blacksquare}

\begin{document}
\maketitle

\begin{abstract}
We define a new measure of intrinsic dimension of a data manifold, which we call the Patnaik-Pearson dimension, and apply this to internal representations of neural networks, in particular transformers. 
The inspiration for this comes from the HTSR and SETOL work of Martin, Mahoney and Hinrichs, combined with the TwoNN intrinsic dimension estimator of Facco et al.
We prove various properties of this intrinsic dimension estimator.
Treating weight matrices of neural networks as data manifolds, for weight matrices whose Empirical Spectral Density follows a Pareto (Power Law) distribution, we relate the Patnaik-Pearson dimension to the HTSR and SETOL analysis, and show that critical values of the tail exponent coincide for the two approaches.
Using a combination of theoretical and numerical techniques, we study the behaviour of the Patnaik-Pearson dimension of a data manifold under the transformations typical to neural networks.
We apply this machinery to the BERT-base and DeepSeek-R1-Distill-Qwen-1 models, to investigate first the Patnaik-Pearson dimension of the initial data manifold of token embeddings, and second the evolution of the Patnaik-Pearson dimension as token embeddings pass through the layers of the model. Code and notebooks used for the numerical results presented here is available at https://github.com/tdhadfield/PatnaikPearson 
\end{abstract}

\section{Introduction}

In this paper we develop machinery for studying
the evolution of the data manifold of internal representations of a neural network as it passes through the layers of the network, with particular application to the transformer architecture. 

Mathematical understanding of neural networks, in particular transformers \cite{attention_is_all_you_need}, is an area of intensive ongoing study by a number of mathematicians, with a variety of different approaches. 
In the work of Rigollet and coworkers \cite{math_pers_trans, mean_field_dyn}, 
the transformer is modelled as a mean-field interacting particle system on a spherical surface. The key mathematical tool here is the Wasserstein gradient flow. Tokens are particles, and each encoder layer is a discrete time-step of a continuity equation driven by a self-attention kernel. The long-time attractor is a point mass (corresponding to rank collapse and token uniformity), but there is a long-lived metastable phase of partial clustering. 
This relates to the measure-theoretic approach of Vuckovic et al. \cite{math_theory_attention}, who interpret self-attention as a system of self-interacting particles.

Neural networks have been extensively studied from the viewpoint of Riemannian geometry. 
In \cite{geometry_dln} Menon describes the training dynamics for Deep Linear Networks in terms of the geometric theory of dynamical systems, unifying results by several authors into a thermodynamic framework for deep learning.
In \cite{curved_spacetime} Di Sipio, Diaz-Rodriguez and Serrano present a geometric framework for understanding transformer-based models, via an explicit analogy to General Relativity. 

There are very active approaches via algebraic geometry.
In \cite{geom_poly_nn} Kubjas, Li and Wiesmann study the geometry of polynomial neural networks with monomial activation functions.
In \cite{algebraic_geometry_rnn} Grosdos and coworkers study rational neural networks using algebraic-geometric tools. 
When the activation function is polynomial, the corresponding function space, the ``neuromanifold", can be naturally described by polynomial equations and inequalities, enabling it to be studied algebraically.
In \cite{neuroalgebraic} Marchetti et al. build the foundations of neuroalgebraic geometry, a research direction combining algebraic geometry and deep learning. They study neural networks with polynomial activations, for which the associated function spaces are semi-algebraic varieties, and outline the correspondence between algebro-geometric invariants of these varieties and fundamental aspects of machine learning.

Topological data analysis and persistent homology provide very valuable insights.  
For example, in \cite{hidden_holes}, Fitz et al. study the evolution of internal topological structure in various LLMs across depth and time during training, finding surprising distinctions between the behaviour of transformer-based as compared to LSTM-based architectures.
In \cite{persistent_llm}, Gardinazzi and coworkers use zigzag persistence to build topological descriptors which measure how topological features develop, persist and evolve through the layers of the network.
In \cite{top_geom_dm}  Magai and Ayzenberg study internal representations of neural networks and the dynamics of changes in the topology and geometry of the data manifold on different layers.
We also mention the work of Fay et al. \cite{shape_adversarial} applying persistent homology to study how adversarial inputs to LLMs reshape the topology and geometry of their internal representation spaces.

Finally, we mention the neural operator approach of Boullé and Townsend \cite{operator_learning}. 
Operator learning aims to discover or approximate an unknown operator, which often corresponds to the solution operator associated with an unknown PDE. Neural operators generalize neural networks by taking the inputs and outputs to be functions rather than vectors.

For a very clear and comprehensive description of the mathematics underlying the transformer architecture, we recommend the paper of Noguer I Alonso \cite{complete_mot}.

In this paper we take a different approach. 
A major motivation for the work presented here is the pioneering work of Martin, Mahoney and Hinrichs \cite{spectral_rg, setol, htsr} on Heavy-Tailed Self-Regularization (HTSR) and their Semi-Empirical Theory of Learning (SETOL). 
They first of all observed empirically \cite{weight_watcher} a recurring pattern of heavy-tailed distributions of eigenvalues in the weight matrices of well-trained models, and then developed a theoretical model for this phenomenom, using techniques from statistical physics, quantum chemistry and random matrix theory.

Taking this as a starting point, we are interested in measuring the intrinsic dimension of a data manifold, and how this evolves as it is acted upon by weight matrices with specific empirical spectral densities.  
The notion of intrinsic dimension of a data manifold is not uniquely defined.
Recall that the Manifold Hypothesis \cite{testing_the_m_h, manifold_hypothesis} is the hypothesis that high dimensional data tend to lie in the vicinity of a low-dimensional submanifold.
This has been observed empirically in many real world situations, giving rise to the development of a wide range of statistical methods, and has been suggested as a key explanatory factor for the success of modern neural network architectures.
Intrinsic dimension is the dimension of this submanifold, namely the true number of degrees of freedom of the data.
In the context of internal data representation of neural networks, intrinsic dimension is the number of coordinates necessary to describe the data without meaningful information loss. It is well-known that deep neural networks are substantially over-parametrized, with significant redundancy among both weights and activations. 

There are many different approaches to defining intrinsic dimension, using a wide range of mathematical tools and capturing different aspects of the data manifold. 
A very important example for this paper is the TwoNN intrinsic dimension estimator \cite{twonn} of Facco et al. 
We also mention the L2N2 estimator of Ong et al. \cite{universal_nne}, which is proven to be universal, in the sense that it converges to the true intrinsic dimension independent of which distribution is used to generate the data.
Finally, we mention the work of Ansuini et al. \cite{int_dim_data_rep_dnn}, on intrinsic dimension of data-representations internal to neural networks, who observe that for trained networks, intrinsic dimension first increases and then progressively decreases as the data representation passes through the layers of the network.

In this work we start by studying a simple Gaussian point cloud generative model for a data manifold $X$, realised as a collection of $N$ points $\{ \bfxone, \bfxtwo, ..., \bfxN \}$ in $\RR^d$.
Our original motivation for this model came from considering the question, what is the intrinsic dimension of the data manifold represented by the token embeddings of the BERT model \cite{bert}.
We apply the TwoNN intrinsic dimension estimator \cite{twonn} to this generative model, and show that another measure of dimension naturally emerges, which we call the Patnaik-Pearson intrinsic dimension of $X$, 
due to its use of Patnaik and Pearson's moment-matching formulae \cite{patnaik_pearson, patnaik, pearson}, and denote by $\PatnaikPearson(X)$. 
We study the properties of $\PatnaikPearson(X)$ for various behaviours of our generative model. 
Treating weight matrices of neural networks as data manifolds, for weight matrices whose Empirical Spectral Density follows a Pareto (Power Law) distribution, we relate the Patnaik-Pearson dimension to the HTSR and SETOL analysis.
In particular, the critical values for the tail exponent for both the Patnaik-Pearson dimension and HTSR and SETOL coincide.

Using a combination of analytic and numerical techniques, we study the behaviour of the Patnaik-Pearson dimension of a data manifold under the transformations typical to neural networks - multiplication by weight matrices; application of activation functions and softmax; addition, interpolation and concatenation; layer normalisation; attention.
A consistent phenomenom throughout is the fact that heavier-tailed distributions dominate lighter-tailed.
Some of these operations typically decrease Patnaik-Pearson dimension, others tend to increase it.
For a data manifold $X$, and weight matrix $W$, represented by matrices 
in general position with shapes $N \times d$ and $d \times d$ drawn from so-called Pareto ensembles, using Voiculescu's free probability \cite{voiculescu_limit_laws, free_random_variables} we prove that, asymptotically for large $N$, $d$, then (Theorem \ref{theorem_pp_product_pareto_ensemble}):
\begin{equation}
{\tfrac{1}{d}} \EE(\PatnaikPearson(X)) * {\tfrac{1}{d}} \EE(\PatnaikPearson(W)) \leq
{\tfrac{1}{d}} \EE(\PatnaikPearson(XW))
\leq {\tfrac{1}{d}} \EE(\min \{ \PatnaikPearson(X), \PatnaikPearson(W) \})
\end{equation}

Finally, we apply this machinery to two examples - the BERT-base and DeepSeek-R1-Distill-Qwen-1 models, to study the Patnaik-Pearson dimension of the data manifold of token embeddings, and the evolution of the Patnaik-Pearson dimension as data passes through the layers of the model. Consistent with the observation of many previous studies, we observe that dimension decreases.

\section{Preliminaries}

\subsection{Random Matrices and heavy-tailed distributions}

We summarise some basic material about random matrices \cite{intro_random_matrices, rmt} 
and heavy-tailed distributions \cite{evt_tsm} that we use in the sequel.\\

{\bf Singular value decomposition:} An $N \times d$ (real-valued) matrix $A$ can be decomposed as 
\begin{equation}
\label{svd}
A = U S V^T
\end{equation}
where $U$ is an orthogonal $N \times N$ matrix, $V$ is $d \times d$ orthogonal and $S$ is a diagonal $N \times d$ matrix, with diagonal entries $S_{ii} = \sqrt{\lambda_i}$, for $1 \leq i \leq \min(N,d)$,
where the $\lambda_i$ are the non-zero eigenvalues of $A A^T$ (which coincide with the non-zero eigenvalues of $A^T A$).\\ 
 
{\bf Marchenko-Pastur distribution:}
Suppose $W$ is an $d \times n$ random matrix, whose entries are iid $N(0, \sigma^2)$, with $\sigma^2 < \infty$.
Define
$Y = {\tfrac{1}{d}} W^T W$
which is a symmetric $n \times n$ matrix, with non-negative eigenvalues $\lambda_1, \ldots ,\lambda_n$.
Then in the limit as $n, d \rto \infty$, keeping the aspect ratio $c := n/d$ constant, the probability distribution of the eigenvalues of $Y$ 
converges to a Marchenko-Pastur distribution $MP(c, \sigma^2)$.
This is supported on $[\lambda_{-} , \lambda_{+}]$, with 
\begin{equation}
\label{mp_lambda_pm}
\lambda_{\pm} = \sigma^2 (1 \pm \sqrt{c})^2
\end{equation}
plus, if $c > 1$, an atom at 0 of mass $1 - {\tfrac{1}{c}}$.
The pdf $f(x)$ is given by 
\begin{equation}
\label{mp_pdf}
f(x) = \frac{ \sqrt{(\lambda_{+} - x)(x - \lambda_{-})} }{2 \pi \sigma^2 c x}, \quad \lambda_{-} \leq x \leq \lambda_{+}
\end{equation}
and $f(x) = 0$ otherwise, apart from the possible atom at 0.\\

{\bf Pareto distribution:} A real-valued random variable $X$ follows a Pareto (Power Law) distribution, with tail exponent $\alpha > 0$, if $\PP(X > x) = x^{-\alpha}$, for $x \geq 1$. The corresponding pdf is 
\begin{equation}
\label{pareto_alpha_pdf_X}
f(t) = {\frac{\alpha}{t^{\alpha+1}}}, \, t \geq 1, \quad f(t) = 0, \, t < 1
\end{equation}

{\bf Regularly-varying distribution:} This all follows Mikosch and Wintenberger \cite{evt_tsm}. 
A random variable $X$ (with distribution $F$) is regularly-varying, with tail exponent $\alpha > 0$, if 
$$
\PP(|X| > x) = {\frac{L(x)}{x^\alpha}}, \quad \mathrm{and} \quad {\frac{\PP(\pm X > x)}{\PP(|X| > x)}} = p_\pm
$$
where $p_{+} + p_{-} = 1$, and $L$ is slowly varying, in the sense that, for all $c > 0$
$$\lim_{x \rto \infty} {\frac{L(cx)}{L(x)}} = 1$$
We denote this property by $X \in \RV(\alpha)$. In particular, if $X$ is Pareto with tail exponent $\alpha$, then $X \in \RV(\alpha)$. 

\begin{theorem}
\label{fellers_convolution_lemma} (Feller's convolution lemma) Assume $X \in \RV(\alpha)$, with $p_{+} > 0$, 
and either 
(i) $Y$ is independent of $X$, and $Y \in \RV(\alpha)$, or (ii) $\PP(|Y| > x) = o (\PP(|X| > x))$ as $x \rto \infty$. 
Then
\begin{equation}
\label{X_plus_Y_feller}
\PP(X + Y > x) \sim \PP(X > x) + \PP(Y > x) \quad \mathrm{as} \quad x \rto \infty
\end{equation}
\end{theorem}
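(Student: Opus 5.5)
The approach is the classical sandwich argument: bound $\PP(X+Y>x)$ from below and from above by $\PP(X>x)+\PP(Y>x)$ up to factors that tend to $1$ after letting a small parameter $\delta\downarrow 0$. Write $\bar F_X(x)=\PP(X>x)$ and $\bar F_Y(x)=\PP(Y>x)$. The inputs are two consequences of regular variation. First, for fixed $\delta\in(0,1)$,
\[
\frac{\bar F_X((1\pm\delta)x)}{\bar F_X(x)}\to(1\pm\delta)^{-\alpha},
\]
which follows from slow variation of $L$ together with $p_+>0$, since $\bar F_X(x)\sim p_+L(x)x^{-\alpha}$. Second, $\PP(|Y|>\delta x)$ is at most of order $\bar F_X(x)$, or is $o(\bar F_X(x))$ in case (ii).

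\textbf{Lower bound.} For fixed $\delta>0$, the events $\{X>(1+\delta)x,\ Y>-\delta x\}$ and $\{Y>(1+\delta)x,\ X>-\delta x\}$ are both contained in $\{X+Y>x\}$. Their intersection lies in $\{X>x,\ Y>x\}$.
\begin{itemize}
\item In case (i), independence gives $\PP(X>x,Y>x)\le \bar F_X(x)\bar F_Y(x)=o(\bar F_X(x)+\bar F_Y(x))$.
\item In case (ii), $\PP(Y>x)=o(\bar F_X(x))$, so the intersection is negligible there too.
\end{itemize}
Since $\PP(Y>-\delta x)\to1$ and $\PP(X>-\delta x)\to1$, in case (i) independence yields
\[
\PP(X+Y>x)\gtrsim (1+\delta)^{-\alpha}\bar F_X(x)+\bar F_Y((1+\delta)x)\,(1+o(1)).
\]
For $\bar F_Y$ I use $Y\in\RV(\alpha)$. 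If $p_-^Y=1$, then $\bar F_Y(x)=o(\bar F_X(x))$ and the $Y$ term is harmless anyway. In case (ii), $\bar F_Y$ is simply negligible, and $\PP(X>(1+\delta)x,\ Y\le-\delta x)\le \PP(|Y|>\delta x)=o(\bar F_X(x))$ by regular variation of $X$ at scale $\delta x$. Letting $\delta\to0$ gives $\liminf \PP(X+Y>x)/(\bar F_X(x)+\bar F_Y(x))\ge1$.

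\textbf{Upper bound.} Use the decomposition
\[
\{X+Y>x\}\subseteq\{X>(1-\delta)x\}\cup\{Y>(1-\delta)x\}\cup\{X>\delta x,\ Y>\delta x\}.
\]
The first two events contribute $(1-\delta)^{-\alpha}(1+o(1))\bigl(\bar F_X(x)+\bar F_Y(x)\bigr)$. This holds for $\bar F_Y$ when $p^Y_+>0$; otherwise one simply bounds $\bar F_Y((1-\delta)x)\le \PP(|Y|>(1-\delta)x)=o(\bar F_X(x))$ in case (ii), or uses a $\limsup$ bound via $\PP(|Y|>\cdot)$ in case (i). The third event is the main obstacle.
\begin{itemize}
\item In case (i), independence gives $\PP(X>\delta x)\PP(Y>\delta x)\le C\delta^{-2\alpha}\bar F_X(x)\,\PP(|Y|>x)$, and this is $o(\bar F_X(x))$ because $\PP(|Y|>\delta x)\to0$.
\item In case (ii) there is no independence, so I instead bound $\PP(X>\delta x,Y>\delta x)\le\PP(|Y|>\delta x)=o(\PP(|X|>\delta x))=o(\bar F_X(x))$, again using regular variation at the scale $\delta x$ and $p_+>0$.
\end{itemize}
Letting $x\to\infty$ and then $\delta\to0$ gives $\limsup\le1$.

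Combining the two bounds yields the asymptotic equivalence in \eqref{X_plus_Y_feller}. The only delicate bookkeeping is that every error term must be compared to $\bar F_X(x)+\bar F_Y(x)$, and $\bar F_X(x)$ alone suffices for this because $p_+>0$. This is why the hypothesis $p_+>0$ is needed.
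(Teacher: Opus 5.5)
The paper does not prove this lemma; it quotes it from Mikosch--Wintenberger. Your sandwich argument is the standard one. It is complete in case (ii), and in case (i) whenever $p_+^Y>0$.

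\textbf{The gap: case (i) with $p_+^Y=0$.}
\begin{itemize}
\item In the lower bound you assert $\bar F_Y(x)=o(\bar F_X(x))$.
\item In the upper bound you invoke an unspecified ``$\limsup$ bound via $\PP(|Y|>\cdot)$''.
\end{itemize}
Both rest on your opening claim that $\PP(|Y|>\delta x)$ is at most of order $\bar F_X(x)$. In case (i) nothing supports this. The hypothesis $X,Y\in\RV(\alpha)$ only gives $\PP(|Y|>x)/\PP(|X|>x)=L_Y(x)/L_X(x)$, a ratio of two unrelated slowly varying functions that may tend to $\infty$. Meanwhile $p_+^Y=0$ only gives $\bar F_Y(x)=o(\PP(|Y|>x))$. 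So $\bar F_Y$ need be neither regularly varying nor negligible against $\bar F_X$. Yet both halves of your sandwich need
\[
\bar F_Y((1\pm\delta)x)=(1+O(\delta))\,\bar F_Y(x)+o\bigl(\bar F_X(x)+\bar F_Y(x)\bigr),
\]
and that is exactly what is unavailable.

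\textbf{This cannot be patched.} With the tail-balance ratios read as limits, as you read them, the statement is false in this case. Take the following pair.
\begin{itemize}
\item $X$ Pareto with $\PP(X>x)=x^{-\alpha}$ for $x\ge1$, so $p_+=1$.
\item $Y$ independent of $X$, with $\PP(Y<-x)\sim x^{-\alpha}\log x$, and positive part consisting of atoms $\PP(Y=y_k)=c\,q_k$, where $q_k=y_k^{-\alpha}\sqrt{\log y_k}$ and $y_{k+1}=e^{y_k}$.
\end{itemize}
For $y_{k-1}\le x<y_k$ one has $\PP(Y>x)\le 2c\,q_k\le 2c\,x^{-\alpha}\sqrt{\log x}$, since $t^{-\alpha}\sqrt{\log t}$ is eventually decreasing. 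Hence $\PP(Y>x)=o(\PP(|Y|>x))$, so $Y\in\RV(\alpha)$ with $p_+^Y=0$. But at $x_k=y_k+\tfrac12$, using $X\ge1$,
\[
\PP(X+Y>x_k)\ge c\,q_k,\qquad \PP(X>x_k)+\PP(Y>x_k)\le y_k^{-\alpha}+c\sum_{j>k}q_j=y_k^{-\alpha}(1+o(1)).
\]
So the ratio is at least of order $\sqrt{\log y_k}\to\infty$.

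If the ratio condition in the paper's definition is read literally as an identity, then $p_+^Y=0$ forces $Y\le0$ and this case becomes trivial. That is not the reading you use, however.

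\textbf{How to fix it.} Add an extra hypothesis in case (i): either $p_+^Y>0$, or $\PP(|Y|>x)=O(\PP(|X|>x))$. Under the latter, $p_+^Y=0$ really does give $\bar F_Y=o(\bar F_X)$. With either hypothesis your argument goes through verbatim. Without one, what your argument actually proves in case (i) is the additive form
\[
\PP(X+Y>x)=\PP(X>x)+\PP(Y>x)+o\bigl(\PP(|X|>x)+\PP(|Y|>x)\bigr).
\]
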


\begin{corollary}
If $X \in \RV(\alpha)$, $Y \in \RV(\beta)$, then $X + Y \in \RV( \min \{ \alpha, \beta \} )$
\end{corollary}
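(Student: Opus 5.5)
Without loss of generality I take $\alpha \leq \beta$, so the claim is $X + Y \in \RV(\alpha)$. I reduce to Theorem \ref{fellers_convolution_lemma} (Feller's convolution lemma), applied to each tail $X+Y$ and $-(X+Y)$, and then check the two defining properties: the tail of $|X+Y|$ is regularly varying with index $\alpha$, and the tail-balance ratios converge. Throughout I take $X$ and $Y$ independent, as in case (i) of the lemma. This is the setting in which the corollary will be used, and without it the statement fails, e.g. $Y=-X$ gives $X+Y = 0$.

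\textbf{Case $\alpha < \beta$.} Write $\PP(|X|>x) = L_X(x)x^{-\alpha}$ and $\PP(|Y|>x)=L_Y(x)x^{-\beta}$. Then $\PP(|Y|>x)/\PP(|X|>x) = (L_Y(x)/L_X(x))\,x^{-(\beta-\alpha)}$. Slowly varying functions satisfy $x^{-\epsilon}L(x)\rto 0$ and $x^{\epsilon}L(x)\rto\infty$ for every $\epsilon>0$ (Potter bounds), so this ratio tends to $0$. Hypothesis (ii) of Feller's lemma therefore holds for $Y$ relative to $X$. If $p_+>0$, the lemma gives $\PP(X+Y>x)\sim \PP(X>x)+\PP(Y>x) \sim \PP(X>x) = p_+L_X(x)x^{-\alpha}$. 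The same argument applied to $(-X,-Y)$ gives $\PP(X+Y<-x)\sim p_-L_X(x)x^{-\alpha}$ when $p_->0$. If one of $p_\pm$ is zero, the lemma cannot be invoked on that side. There I bound directly: $\PP(X+Y>x)\le \PP(X>(1-\delta)x)+\PP(Y>\delta x)$, which is $o(\PP(|X|>x))$ by regular variation, so that side contributes $0 = p_\pm$ in the limit. Adding the two sides gives $\PP(|X+Y|>x)\sim L_X(x)x^{-\alpha}$ with tail-balance constants $p_\pm$ of $X$. Hence $X+Y\in\RV(\alpha)$.

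\textbf{Case $\alpha=\beta$.} Here I use case (i) of the lemma. On the $+$ side, provided at least one of $p_+^X, p_+^Y$ is positive (swapping the roles of $X$ and $Y$ if needed), I get $\PP(X+Y>x)\sim p_+^XL_X(x)x^{-\alpha}+p_+^YL_Y(x)x^{-\alpha}$. The $-$ side is handled in the same way. Summing gives $\PP(|X+Y|>x)\sim (L_X(x)+L_Y(x))x^{-\alpha}$, and $L_X+L_Y$ is slowly varying because $L_X(cx)+L_Y(cx)$ lies between $\min$ and $\max$ of the two ratios $L_X(cx)/L_X(x)$, $L_Y(cx)/L_Y(x)$ times $L_X(x)+L_Y(x)$. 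For the tail balance, the ratio $\PP(X+Y>x)/\PP(|X+Y|>x)$ equals $(p_+^X L_X+p_+^Y L_Y)/(L_X+L_Y)$. This need not converge unless $L_X/L_Y$ has a limit in $[0,\infty]$.

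\textbf{Main obstacle.} The expected obstacle is the tail-balance condition in the equal-exponent case, together with the degenerate sides where $p_\pm=0$. For the latter I will use the elementary splitting bound above rather than Feller's lemma. For the former I will pass to a subsequence along which $L_X/L_Y$ converges, or else note that the convergence of the balance ratio is a standing convention that holds automatically in the Pareto setting used later, where all $L$ are constant. If needed, I will state the corollary with the mild extra assumption that $L_X/L_Y$ converges, which covers all later applications. The key analytic input is otherwise just Potter's bound for slowly varying functions.
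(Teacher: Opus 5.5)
Your route is the one the paper intends. The corollary is stated immediately after Theorem \ref{fellers_convolution_lemma} with no further argument, and your case split supplies the details the paper omits: Potter bounds to verify hypothesis (ii) when $\alpha<\beta$, case (i) when $\alpha=\beta$, the splitting bound on sides where $p_\pm=0$, and slow variation of $L_X+L_Y$. Your $\alpha<\beta$ argument uses only hypothesis (ii) and the splitting bound, so it never needs independence. Independence is therefore required only when $\alpha=\beta$, where $Y=-X$ shows it cannot be dropped.

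One small repair in the equal-exponent case. If, say, $p_+^Y=0$, you only know $\PP(Y>x)=o(L_Y(x)x^{-\alpha})$, and this need not be $o(L_X(x)x^{-\alpha})$. So the one-sided relation $\PP(X+Y>x)\sim p_+^XL_X(x)x^{-\alpha}+p_+^YL_Y(x)x^{-\alpha}$ can fail as written. State it with error $o\bigl((L_X(x)+L_Y(x))x^{-\alpha}\bigr)$ instead; that is all the sum and the ratio need.

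The real issue is how you handle the main obstacle. Passing to a subsequence along which $L_X/L_Y$ converges does not work. It gives convergence of $\PP(\pm(X+Y)>x)/\PP(|X+Y|>x)$ only along that subsequence, whereas membership in $\RV$ requires the limit as $x\rto\infty$.

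Moreover, no argument can avoid an extra hypothesis, because the statement is false without one even for independent variables. Take $X\ge 0$ with $\PP(X>x)=x^{-\alpha}$, and $Y\le 0$ independent with $\PP(Y<-x)=e^{\sin(\log\log x)}x^{-\alpha}$ for large $x$. This is a valid tail, since it is eventually decreasing once $\log x>1/\alpha$, and $e^{\sin(\log\log x)}$ is slowly varying. The sandwich bounds $\PP(X>(1+\delta)x)\PP(Y\ge-\delta x)\le\PP(X+Y>x)\le\PP(X>x)$, together with their mirror images, give $\PP(X+Y>x)\sim x^{-\alpha}$ and $\PP(X+Y<-x)\sim e^{\sin(\log\log x)}x^{-\alpha}$. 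So $|X+Y|$ has a regularly varying tail, but the balance ratio $\PP(X+Y>x)/\PP(|X+Y|>x)\sim(1+e^{\sin(\log\log x)})^{-1}$ oscillates between $(1+e)^{-1}$ and $(1+e^{-1})^{-1}$. Hence $X+Y\notin\RV(\alpha)$, and the corollary as printed is false in general.

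You should therefore commit to your last option. State the corollary with independence when $\alpha=\beta$, and with the assumption that $L_X/L_Y$ has a limit in $[0,\infty]$; this is automatic for the Pareto tails used later in the paper. Under those hypotheses your argument is complete.
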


\begin{theorem}
\label{product_rv} 
Assume that $X, Y > 0$ are independent, with $X \in \RV(\alpha)$ for some $\alpha > 0$, and also either $Y \in \RV(\alpha)$, or $\PP(Y > x) = o(\PP(X > x))$ (e.g. if $Y \in \RV(\beta)$, for $\beta > \alpha$), then 
$X Y \in \RV(\alpha)$.
In particular, 
$$
X \in \RV(\alpha), \quad Y \in \RV(\beta), 
\quad \alpha \ne \beta \quad 
\implies 
\quad
X Y \in 
\RV( \min \{ \alpha, \beta \}
$$  
If in addition $\EE( Y^{\alpha + \epsilon} ) < \infty$ for some $\epsilon > 0$, then (Breiman's lemma):
$$\PP(X Y > x) \sim \EE(Y^\alpha) \PP(X > x)$$
\end{theorem}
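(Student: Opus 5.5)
The statement has three parts: the product theorem under either hypothesis on $Y$, the corollary for $\alpha \neq \beta$, and Breiman's lemma. I would prove Breiman's lemma first, then the case $\PP(Y > x) = o(\PP(X > x))$, then the case $Y \in \RV(\alpha)$.

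\textbf{Breiman's lemma.} Condition on $Y$:
$$\PP(XY > x) = \EE\big[\PP(X > x/Y \mid Y)\big].$$
Dividing by $\PP(X > x)$ gives
$$\frac{\PP(XY>x)}{\PP(X>x)} = \EE\left[\frac{\bar F(x/Y)}{\bar F(x)}\right], \qquad \bar F(t) := \PP(X > t).$$
For each fixed $y>0$, regular variation gives $\bar F(x/y)/\bar F(x) \to y^{\alpha}$ as $x \to \infty$. To exchange limit and expectation, I would use dominated convergence with Potter's bounds. These give a constant $C$ and a threshold $x_0$ such that
$$\frac{\bar F(x/y)}{\bar F(x)} \le C \max\{y^{\alpha+\epsilon}, y^{\alpha-\epsilon}\} \quad \text{whenever } x/y \ge x_0 .$$
The majorant is integrable because $\EE(Y^{\alpha+\epsilon}) < \infty$, and $\EE(Y^{\alpha-\epsilon})$ is then finite by Jensen. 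The region $Y > x/x_0$ needs separate treatment. There I bound the ratio by $\PP(Y > x/x_0)/\bar F(x)$, which by Markov is at most
$$\frac{x_0^{\alpha+\epsilon}\,\EE(Y^{\alpha+\epsilon})\, x^{-\alpha-\epsilon}}{L(x)\, x^{-\alpha}} \to 0,$$
since $x^{-\epsilon}/L(x) \to 0$ for slowly varying $L$. This yields
$$\PP(XY > x) \sim \EE(Y^\alpha)\, \PP(X > x),$$
so $XY \in \RV(\alpha)$ with slowly varying function $\EE(Y^\alpha) L$.

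\textbf{Case $\PP(Y > x) = o(\PP(X > x))$ without moment assumptions.} This is the main obstacle, because $\EE(Y^{\alpha+\epsilon})$ may be infinite. The task is to show
$$\frac{\PP(XY > cx)}{\PP(XY > x)} \to c^{-\alpha}.$$
I would split at a level $M$, writing
$$\PP(XY>x) = \PP(XY>x,\, Y\le M) + \PP(XY>x,\, Y>M).$$
The bounded part $Y\le M$ satisfies Breiman trivially. For the part $Y > M$ the aim is to show it is negligible relative to $\PP(X>x)$, using the $o(\cdot)$ hypothesis on the tail of $Y$ together with uniform convergence of $\bar F(x/y)/\bar F(x)$ on compacts in $y$. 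If that negligibility estimate proves delicate, I would fall back on the classical result of Embrechts--Goldie (1980), which states that the product convolution of regularly varying tails is regularly varying when one tail dominates.

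\textbf{Case $Y \in \RV(\alpha)$.} Here Embrechts--Goldie applies directly. I would cite it rather than re-derive the Karamata-type integral estimates.

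\textbf{Corollary.} The statement for $\alpha \neq \beta$ follows from the preceding cases by taking the smaller index as the dominant factor.
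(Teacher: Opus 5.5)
The paper gives no proof of this theorem: it is quoted as background from Mikosch--Wintenberger \cite{evt_tsm}, so your appeal to Embrechts--Goldie for the equal-index case matches the paper's own level of rigour. Your proof of Breiman's lemma is the standard correct argument: Potter bounds and dominated convergence on $\{Y\le x/x_0\}$, and Markov's inequality on $\{Y>x/x_0\}$. Take $\epsilon<\alpha$, or bound the ratio by $1$ on $\{Y\le 1\}$, so that the majorant is integrable near $0$. The reduction of the $\alpha\neq\beta$ statement to the $o(\cdot)$ case is also fine.

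The step that would fail is your main plan for the case $\PP(Y>x)=o(\PP(X>x))$. Suppose you showed that $\PP(XY>x,\,Y>M)$ is negligible relative to $\PP(X>x)$ and let $M\to\infty$. That would prove $\PP(XY>x)\sim\EE(Y^\alpha)\PP(X>x)$, which is Breiman's conclusion. But the hypothesis allows $\EE(Y^\alpha)=\infty$, and then $\PP(XY>x)/\PP(X>x)\to\infty$. For example, take $\PP(X>x)=x^{-\alpha}$ for $x\ge 1$ and $\PP(Y>y)=e^{\alpha}y^{-\alpha}/\log y$ for $y\ge e$, so that $\PP(Y>x)/\PP(X>x)=e^{\alpha}/\log x\to 0$. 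Then
\[
\PP(XY>x)=x^{-\alpha}\,\EE\bigl(Y^{\alpha};\,Y\le x\bigr)+\PP(Y>x)\sim \alpha e^{\alpha}x^{-\alpha}\log\log x .
\]
This is in $\RV(\alpha)$, as the theorem asserts, yet $\PP(XY>x,\,Y>M)/\PP(X>x)\to\infty$ for every fixed $M$. So the real obstacle is not a missing moment of order $\alpha+\epsilon$. It is that $\PP(XY>x)$ need not be $O(\PP(X>x))$, so no fixed cut-off normalised by $\PP(X>x)$ can work. The negligibility estimate is not merely delicate; it is false.

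The missing idea is to cut at a level that grows with $x$, and to compare $\PP(XY>cx)$ with $\PP(XY>x)$ directly.

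\begin{itemize}
\item Write $\bar F(u)=\PP(X>u)$. Fix $c>0$ and $\eta>0$, and choose $t$ with $|\bar F(cu)/\bar F(u)-c^{-\alpha}|\le\eta$ for all $u\ge t$.
\item On $\{Y\le x/t\}$ one has $x/Y\ge t$. Hence $\EE(\bar F(cx/Y);\,Y\le x/t)$ lies between $c^{-\alpha}-\eta$ and $c^{-\alpha}+\eta$ times $\EE(\bar F(x/Y);\,Y\le x/t)$.
\item The remaining pieces of $\PP(XY>cx)$ and $\PP(XY>x)$ are both at most $\PP(Y>x/t)=o(\bar F(x/t))=o(\bar F(x))$.
\item On the other hand, $\PP(XY>x)\ge\PP(Y>\delta)\,\bar F(x/\delta)\ge c_0\bar F(x)$ for suitable $\delta,c_0>0$ and large $x$. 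So these remainders are $o(\PP(XY>x))$.
\item It follows that $\limsup_{x\to\infty}|\PP(XY>cx)/\PP(XY>x)-c^{-\alpha}|\le\eta$, and $\eta$ is arbitrary.
\end{itemize}

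This needs no moment of $Y$ and no uniform convergence on compacts. It settles the $o(\cdot)$ case, and hence the $\alpha\neq\beta$ corollary, outright. Only the equal-index case requires a more symmetric argument, for instance splitting according to whether $X$ or $Y$ exceeds $\sqrt{x}$; citing Embrechts--Goldie there is reasonable.
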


{\bf Subexponential distributions:} Suppose $X$ is a non-negative random variable, and $X_i \sim X$ iid, for $i = 1,2, \ldots n$. 
Then (the distribution of) $X$ is subexponential if for all (equivalently, for some) $n \geq 2$, 
$$\lim_{x \rto \infty} {\frac{\PP( X_1 + \ldots + X_n > x)}{n \PP(X > x)}} = 1$$
This implies that $X$ obeys the ``single big jump" principle
\begin{equation}
\label{single_big_jump}
\PP( X_1 + \ldots + X_n > x) \sim \PP(M_n > x) \quad \mathrm{as} \quad x \rto \infty
\end{equation}
where $M_n = \max \{ X_1, \ldots , X_n \}$. 
By (\ref{X_plus_Y_feller}) regularly-varying implies subexponential.

\subsection{HTSR and SETOL}
\label{subsection_htsr_setol}

We give a brief summary of the Heavy-Tailed Self-Regularization (HTSR) and Semi-Empirical Theory of Learning (SETOL) work of Martin, Mahoney and Hinrichs \cite{weight_watcher, spectral_rg, setol, htsr}.

We are concerned with the weight matrices at each layer of some Neural Network. 
Given a real-valued $d \times n$ weight matrix $W$ define $Y$ to be the
$n \times n$ matrix
$$Y ={\frac{1}{d}} {W^T}W $$
The Empirical Spectral Density (ESD) of $W$, denoted $\rhoemp(\lambda)$ is formed from the eigenvalues $\lambda_j$ of $Y$:
$$
\rhoemp(\lambda) = 
{\frac{1}{n}} 
\sum_{j=1}^n 
\delta
(\lambda - \lambda_j)
$$
For randomly initialized weights, we would expect a Marchenko-Pastur distribution (\ref{mp_lambda_pm}, \ref{mp_pdf}). 
However, numerical results consistently show that, for the best-performing pre-trained models, the weight matrices (of all layers) have heavy-tailed ESDs, 
and that the tails of these ESDs 
can be fit well by a Pareto (Power Law) distribution, for some tail exponent $\alpha$, beyond some cutoff $\lambda_0$:
\begin{equation}
\label{htsr_alpha}
\rhotail(\lambda) = \rhoemp(\lambda) \sim \lambda^{-\alpha}, \quad \lambda \geq \lambda_0
\end{equation}
Note that this is a slightly different notational convention than used in (\ref{pareto_alpha_pdf_X}), to avoid any confusion later in this paper
note that the correspondence between the two is given by 
\begin{equation}
\label{alpha_HTSR_alpha_Pareto}
{\alpha_{\HTSR}} = {\alpha_{\Pareto}} + 1 
\end{equation}
See also Section \ref{correspondence_htsr_setol_pp_dim}.

The HTSR theory states that a Neural Network layer is optimally trained when its ESD can be fit to a Power Law distribution, with tail exponent (in the sense of (\ref{htsr_alpha})) of $\alpha = 2$. 
Crucially, this appears to be a universal property of all well-trained Neural Networks, irrespective of training data, model architecture, and training procedure.
Furthermore, empirical results show that $\alpha < 2$ (the so-called HTSR Very Heavy-Tailed class) characterises overfitting.
In addition HTSR uses Random Matrix Theory to identify qualitatively-distinct phases of learning, 
classifying the ESD of a weight matrix $W$ into one of 5+1 Phases of Training, as follows:
\begin{enumerate}
\item{Random: corresponds to the start of training - the randomly initialized weights given rise to a Marchenko-Pastur distribution for the ESD.}
\item{Bulk plus Spikes: as training progresses, larger eigenvalues (spikes) appear and separate themselves from the Marchenko-Pastur bulk.}
\item{Weakly Heavy-Tailed: a power-law distribution for the largest eigenvalues has developed, with tail exponent $\alpha > 6$.}
\item{Heavy (Fat) Tailed: as training continues, $\alpha$ steadily decreases, and now lies in the range $2 < \alpha < 6$.}
\item{Very Heavy Tailed: $1 < \alpha < 2$. This indicates overfitting.}
\item{Rank Collapse.}
\end{enumerate}

As discussed in \cite{complete_mot, grokking_htsr}, the evolution of the ESD during training relates to the ``grokking" phenomenom \cite{grokking}, where the initial phase of model training corresponds to memorization, and is characterised by high intrinsic dimension of weight matrices, followed by a sharp reduction in  intrinsic dimension at the grokking transition. 
The very useful WeightWatcher package \cite{weight_watcher} computes various HTSR Layer Quality metrics.
It is not fully understood why training via Stochastic Gradient Descent (SGD) produces heavy tails.

\subsection{The TwoNN intrinsic dimension formula}
\label{subsection_twonn}

We summarise the formulae given by Facco et al. \cite{twonn}. For a detailed derivation, see Appendix \ref{appendix_twonn}.

Suppose we have a collection $X$ of $N$ points $\{ \xsubone , \ldots , \xsubN \}$ in $\RR^d$, that lie on an $m$-dimensional submanifold of $\RR^d$. 
We want to estimate $m$. 
For a given point $\xsubi$, consider the list of its nearest neighbors. 
Let $r_{i,1} \leq  r_{i,2} \leq \ldots$ be a sorted list of their distances from $\xsubi$.
Thus $r_{i,1}$ is the distance from $\xsubi$ to its nearest neighbour, $r_{i,2}$ is the distance to the second-nearest neighbour, and so on.
For each $i$, define $\mu_i = r_{i,2}/r_{i,1}$, the ratio of the distances from $\xsubi$ to its second and first nearest neighbours.
We think of $\{ \mu_i \}_{1 \leq i \leq N}$ as representing draws from the distribution of a random variable $\mu$.
Then it can be shown (see \cite{twonn}, or the Appendix (\ref{appendix_twonn})), that the pdf and CDF of $\mu$ are given by
\begin{equation}
\label{mu_pdf_cdf}
f (t) = m t^{-(m+1)} {\bf{1}}_{[1,\infty)},
\quad
F(x) = (1 - x^{-m}) {\bf{1}}_{[1,\infty)}
\end{equation}

\begin{definition}
The Two Nearest Neighbors (TwoNN) estimator 
for the intrinsic dimension of $X$ is
\begin{equation}
\label{def_twonn_idx}
\TwoNN(X) 
= 
- \EE \left( {\frac{\log(1 - F(\mu))}{\log(\mu)}} \right) 
= 
-{\frac{1}{N}} \sum_{i=1}^N {\frac{\log(1 - F(\mu_i))}{\log(\mu_i)}}
\end{equation}
\end{definition}
The algorithm for estimating this is as follows.
\begin{enumerate}
\item{Compute the pairwise distances for all pairs of points in $X$.}
\item{For each point $\xsubi$, find the two shortest distances $r_{i,1}$ and $r_{i,2}$, and compute $\mu_i = {\frac{r_{i,2}}{r_{i,1}}}$.}
\item{Empirically estimate the cumulative distribution $F(x)$ by sorting the values $\mu_i$ in ascending order.}
\item{Plot the points $(\log(\mu_i), - \log(1 - F(\mu_i)))$ in the plane, and fit a straight line passing through the origin. The estimate for $\TwoNN(X)$ is the gradient of this line.}
\end{enumerate}
In the case where we have a single observation $\mu$, then we can estimate (\ref{def_twonn_idx}) as
\begin{equation}
\label{def_twonn_single_obs}
\TwoNN(X) 
=  - {\frac{\log(0.5)}{\log(\mu)}} = {\frac{\log(2)}{\log(\mu)}}
\end{equation}

\section{The Patnaik-Pearson intrinsic dimension}

We present a simple generative model for a data manifold, and define a new estimator for intrinsic dimension specifically for this situation, which we call the Patnaik-Pearson intrinsic dimension, denoted $\PatnaikPearson(X)$. This arises naturally from applying the TwoNN estimator to this model and then using Patnaik and Pearson's moment-matching formulae. 
In practice we have found numerically, for synthetic data of known intrinsic dimension, that the TwoNN estimator underestimates the real intrinsic dimension (this seems to be well-known). 
We prove various properties of $\PatnaikPearson(X)$, in particular its behaviour when the ESD of $X$ follows a Pareto distribution.

\subsection{A simple Gaussian point cloud generative model for a data manifold}
\label{generative_model}

Consider  a simple generative model for our data manifold $X$, realised as the collection $X = \{ \xsubone, \ldots , \xsubN \}$ of $N$ points in $\RR^d$ defined as follows. 
Let $\xsubi = (x_{i,k})_{1 \leq k \leq d}$, where $x_{i,k} \sim \lambda_k Z_{i,k}$, where the $Z_{i,k}$ are iid $N(0,1)$, and the scalars $\lambda_1$, .. ,$\lambda_d$ satisfy
$$0 \leq \lambda_1 \leq \lambda_2 \leq \ldots \leq \lambda_d$$
Denote $\blambda = (\lambda_k)_{1 \leq k \leq d}$.
Obviously, this Gaussian point cloud model is a strong assumption about the shape of our data manifold and there are many data manifolds that do not correspond to this, but it is a useful model for our analysis.

The general question is, given some estimator $\ID(X)$ for the intrinsic dimension of $X$, how does this depend on $\blambda$?
More precisely, can we find functions $g_1$, $g_2$ that are lower and upper bounds (for sufficiently large $N$ and $d$): 
\begin{equation}
\label{bounds_one_g}
g_1 (\blambda ) \leq \ID(X) \leq g_2 (\blambda)
\end{equation}
In particular, for the situation where the $\lambda_k$ are drawn from a Pareto distribution with tail density exponent $\alpha$ (\ref{pareto_alpha_pdf_X}), we would like to characterise $\ID(X)$ in terms of $\alpha$, i.e. find functions $h_1$, $h_2$ such that
\begin{equation}
\label{bounds_two_h}
h_1 (\alpha) \leq \ID(X) \leq h_2 (\alpha)
\end{equation}
We expect any sensible estimator $\ID(X)$ to have the following properties:
\begin{enumerate}
\item{$1 \leq \ID(X) \leq d$, provided $\blambda \neq {\bf 0}$.}
\item{For any $m$ with $0 \leq m \leq d - 1$, if $\lambda_1 = \ldots = \lambda_{d - m} = 0 \neq \lambda_{d - m + 1}$, then $\ID(X) \leq m$.}
\item{For any $m$ with $0 \leq m \leq d - 1$, if $\lambda_1 = \ldots = \lambda_{d - m} = 0$, and $\lambda_{d - m + 1} = \ldots = \lambda_d \neq 0$, then $\ID(X) = m$.}
\item{If we scale each $\lambda_k$ by a strictly positive scalar $\phi$, then $\ID(X)$ will be unchanged.}
\item{$\ID(X)$ is invariant under rotations and translations of $X$.}
\end{enumerate}

\subsection{The generalised Chi-Squared distribution}

We now apply the TwoNN estimator to our generative model and find that the Patnaik-Pearson estimator naturally emerges from our analysis. 
To find nearest neighbours we need to calculate the distances between points.
The (squared) distance between two randomly chosen points $\xsubi$, $\xsubj \in X$ is given by
$$
{\dist(\xsubi , \xsubj )}^2 = {|| \xsubi - \xsubj ||}^2 = \sum_{k=1}^d \lambda_k^2 (Z_{i,k} - Z_{j,k})^2
$$
Now, $Z_{i,k} - Z_{j,k} = \sqrt{2} Z_{i,j,k}$, for some $Z_{i,j,k} \sim N(0,1)$.
Hence 
\begin{equation}
\label{dist_sqrd}
{\dist(\xsubi , \xsubj )}^2 = 2 \sum_{k=1}^d \lambda_k^2 Z_{i,j,k}^2
\end{equation}
which is ``Chi-squared with $2 (\sum_{k=1}^d \lambda_k^2)$ degrees of freedom".
For each point $\xsubi$ we want to find the distances to its first and second neighbours, namely
$$
r_{i,1} = 
\min_{j \neq i} { \{ \dist(\xsubi , \xsubj ) \} },
\quad
r_{i,2} =  
\secondmin_{j \neq i} { \{ \dist(\xsubi , \xsubj ) \} }
$$

{\bf Patnaik-Pearson moment-matching}: Suppose that $Z_i \sim N(0,1)$ are iid, for $1 \leq i \leq d$. 
As before, assume that 
$0 \leq \lambda_1 \leq \lambda_2 \leq \ldots \leq \lambda_d$,
and define the weighted sum of squares
\begin{equation}
\label{generalised_chi_squared}
Y = \lambda_1 Z_1^2 + \lambda_2 Z_2^2 + \ldots + \lambda_d Z_d^2
\end{equation}
This follows a Generalised Chi-Squared distribution.
Patnaik \cite{patnaik} showed that this could be approximated (matching the first two moments of the distribution) by a scaled $\chi^2$-distribution
\begin{equation}
\label{def_nu_def_c}
Y \sim c \cdot \chi^2 (\nu) \sim \Gamma({\tfrac{\nu}{2}}, 2c)
\quad \mathrm{where} \quad
\nu = \nu({\blambda}) = {\frac 
{(\sum_{i=1}^d {\lambda_i})^2}
{\sum_{i=1}^d \lambda_i^2}
}
, \quad
c = c( {\blambda}) = {\frac {\sum_{i=1}^d {\lambda_i^2} }{\sum_{i=1}^d {\lambda_i}} 
}
\end{equation}
This was extended by (Egon) Pearson \cite{pearson} to match the first three moments of the distribution. 
It is immediate that $1 \leq \nu (\blambda) \leq d$,
with the upper and lower bounds being realised by the cases $\lambda_1 = \ldots = \lambda_d \neq 0$; and $\lambda_i = 0$  for all $i \neq d$, respectively. 

Now, $\Gamma({\tfrac{\nu}{2}}, 2c)$ has pdf given by 
\begin{equation}
\label{pdf_gamma}
f(t) = {\frac{1}{\Gamma({\tfrac{\nu}{2}})}} 
	2^{\nu/2} 
	x^{\nu/2 -1} 
	e^{-x/2}, 
	\quad t > 0
\end{equation}
and hence, for $x$ small, CDF given by 
\begin{equation}
\label{cdf_gamma}
F(x) \approx {\frac{1}{\Gamma({\tfrac{\nu}{2}} +1)}} 
{\left( 
{\frac{x}{2}} 
\right)
}^{\nu/2} 
\end{equation}
Starting with $Y \sim c \cdot \chi^2 (\nu)$ 
, then $\chi^2 (\nu)$ has pdf proportional to
$t^{\nu/2 -1} e^{-t/2}$
so near $t=0$, 
$${\CDF}_{\chi^2 (\nu)} (t) \sim {\frac{1}{\Gamma({\frac{\nu}{2}} + 1)}} \left({\frac{t}{2}}\right)^{\nu/2}$$
So to solve 
$\CDF_{\chi^2 (\nu)} (x) = y$, 
then 
$$
\left({\frac{x}{2}}\right)^{\nu/2} = y \Gamma ( {\tfrac{\nu}{2}} + 1) 
\quad \implies 
x = 2 y^{2/\nu} ( \Gamma ( {\tfrac{\nu}{2}} + 1) )^{2/\nu}
$$

Suppose we have a non-negative random variable $X$.
If we draw $N$ times from this distribution, obtaining $X_1$, $X_2$, .. , $X_N$, and define the first and second order statistics
$$X_{(1)} = \min(X_1, X_2, \ldots, X_N), \quad  X_{(2)} = \secondmin(X_1, X_2, \ldots, X_N)$$
Then 
$$\EE(X_{(1)}) = {\CDF}^{-1}({\tfrac{1}{N}}), \quad \EE(X_{(2)}) = {\CDF}^{-1}({\tfrac{2}{N}})$$
Hence, for $Y \sim c \cdot \chi^2 (\nu)$, then
$\PP( Y \leq x) = \PP( \chi^2(\nu) \leq {\tfrac{x}{c}})$,
and ${\CDF}_Y (x) = {\CDF}_{\chi^2 (\nu)} ( {\frac{x}{c}})$.
It follows that 
$${\CDF}_Y^{-1} ({\tfrac{1}{N}}) 
= c \, {\CDF}_{\chi^2 (\nu)}^{-1} ({\tfrac{1}{N}})$$
Therefore
$$ {\CDF}_Y^{-1} ({\tfrac{2}{N}}) 
= c \, {\CDF}_{\chi^2 (\nu)}^{-1} ({\tfrac{2}{N}}) 
\approx c \, 2^{2/\nu} \, {\CDF}_{\chi^2 (\nu)}^{-1} ({\tfrac{1}{N}})
= 2^{2/\nu} \, {\CDF}_Y^{-1} ({\tfrac{1}{N}})$$
hence
$${\CDF}_Y^{-1} ({\tfrac{2}{N}}) / {\CDF}_Y^{-1} ({\tfrac{1}{N}})  
\approx 2^{2/\nu}$$
and thus, using (\ref{dist_sqrd}), 
$$\mu = \mu(N) := \sqrt{{\CDF}_Y^{-1} ({\tfrac{2}{N}}) / {\CDF}_Y^{-1} ({\tfrac{1}{N}})}   
\approx 2^{1/\nu}$$
Combining this with (\ref{def_twonn_single_obs}) gives 
\begin{equation}
\label{twonn_meets_patnaik_pearson}
\TwoNN(X) \approx {\tfrac{\log(2)}{\log(\mu(N))}} 
= {\tfrac{\log(2)}{\log(2^{1/\nu})}}  
= \nu (\blambda)
\end{equation}
This suggests that $\nu (\blambda)$ may be a useful proxy for $\TwoNN(X)$. 

\subsection{The Patnaik-Pearson intrinsic dimension of a data manifold}

Following (\ref{def_nu_def_c}, \ref{twonn_meets_patnaik_pearson}), we define a function 
$\nu : \RR_{\geq 0}^d \backslash \{ {\bf 0} \} \rto \RR_{> 0}$,
and use this to define the Patnaik-Pearson intrinsic dimension $\PatnaikPearson(X)$ of a (realisation of a) data manifold $X$.

\begin{definition} For $\blambda \in \RR_{\geq 0}^d \backslash \{ {\bf 0} \}$, $\blambda = {( \lambda_i )}_{1 \leq i \leq d}$, define 
\begin{equation}
\label{defn_nu}
\nu( \blambda) = 
{\frac 
{(\sum_{i=1}^d {\lambda_i})^2}
{\sum_{i=1}^d \lambda_i^2}
}
\end{equation}
Obviously $\nu$ extends to a function $\RR^d \backslash \{ {\bf 0} \} \rto \RR$, 
but we want to emphasise the fact that we are assuming that all the $\lambda_i$ are non-negative.
\end{definition}

\begin{lemma} 
The following properties of $\nu(\blambda)$ are immediate:
\begin{enumerate} 
\item{$1 \leq \nu( \blambda) \leq d$, with the upper and lower bounds being realised by the cases $0 \neq \lambda_1 = \ldots = \lambda_d$; and $\lambda_i = 0$  for all $i \neq d$, respectively. }
\item{For any $m$ with $0 \leq m \leq d - 1$, if $\lambda_1 = \ldots = \lambda_{d - m} = 0 \neq \lambda_{d - m + 1}$, then $\nu( \blambda) \leq m$, with equality iff $\lambda_{d - m + 1} = \ldots = \lambda_d \neq 0$.}
\item{For any $\phi > 0$, then $\nu( \phi \blambda) = \nu( \blambda)$.}
\end{enumerate}
\end{lemma}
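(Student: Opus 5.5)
Write $S_1 = \sum_{i=1}^d \lambda_i$ and $S_2 = \sum_{i=1}^d \lambda_i^2$, so that $\nu(\blambda) = S_1^2/S_2$. Since $\blambda \neq {\bf 0}$ and all $\lambda_i \geq 0$, we have $S_1, S_2 > 0$, so $\nu$ is well defined and positive. The plan is to derive all three properties from two elementary inequalities, Cauchy--Schwarz and the expansion of a square of non-negative numbers, applied to the support of $\blambda$.

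For property 1, the upper bound is Cauchy--Schwarz applied to the vectors $(1,\ldots,1)$ and $\blambda$: $S_1^2 \leq d\, S_2$, with equality iff $\blambda$ is proportional to $(1,\ldots,1)$, i.e.\ $0 \neq \lambda_1 = \ldots = \lambda_d$. For the lower bound, expand
\begin{equation*}
S_1^2 = S_2 + 2\sum_{i<j} \lambda_i \lambda_j \geq S_2 ,
\end{equation*}
using non-negativity of every cross term. Equality holds iff every product $\lambda_i\lambda_j$ with $i<j$ vanishes, i.e.\ exactly one $\lambda_i$ is non-zero. Under the ordering $0 \leq \lambda_1 \leq \ldots \leq \lambda_d$ this means $\lambda_i = 0$ for all $i \neq d$. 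This is the one place where the sign assumption on the $\lambda_i$ is essential.

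For property 2, the hypothesis gives $\lambda_i = 0$ for $i \leq d-m$, so $\nu(\blambda) = \nu(\blambda')$, where $\blambda' = (\lambda_{d-m+1}, \ldots, \lambda_d) \in \RR_{\geq 0}^m \backslash \{{\bf 0}\}$. Applying the Cauchy--Schwarz bound from property 1 in dimension $m$ gives $\nu(\blambda) \leq m$, with equality iff all entries of $\blambda'$ are equal and non-zero. The hypothesis $\lambda_{d-m+1} \neq 0$ guarantees that $\blambda'$ is non-zero, and by the ordering it forces all entries of $\blambda'$ to be non-zero. The case $m = 0$ is vacuous or degenerate, since it would force $\blambda = {\bf 0}$, so I would remark that the statement is only meaningful for $m \geq 1$.

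Property 3 follows by direct substitution: $S_1(\phi\blambda) = \phi S_1$ and $S_2(\phi\blambda) = \phi^2 S_2$, so the factor $\phi^2$ cancels in the ratio. None of the steps is a real obstacle. The only points needing care are the equality characterisations in properties 1 and 2, which use the non-negativity and the ordering of the $\lambda_i$, and the degenerate index $m = 0$ in property 2.
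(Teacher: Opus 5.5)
Your proof is correct and supplies the standard argument the paper leaves implicit when it calls these properties ``immediate'': Cauchy--Schwarz against $(1,\ldots,1)$ for $\nu(\blambda)\le d$, non-negativity of the cross terms in $(\sum_i\lambda_i)^2$ for $\nu(\blambda)\ge 1$, restriction to the support of $\blambda$ for property 2, and degree-2 homogeneity for property 3. Your remark that the case $m=0$ in property 2 is degenerate is a fair correction to the statement as written, since that case forces $\blambda={\bf 0}$, which lies outside the domain of $\nu$.
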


\begin{definition} The Patnaik-Pearson intrinsic dimension $\PatnaikPearson(X)$ of a data manifold $X$. 
Suppose we have a realisation of $X$ as $N$ points $\bfxone, \bfxtwo, \ldots, \bfxN \in \RR^d$, considered as an $N \times d$ matrix whose rows are the $\bfxi^T$.
Define $\xoverline = {\tfrac{1}{N}} \sum_{i=1}^N \bfxi$, and define $\Xresid = X -  {\bf 1}_N \xoverline^T$, i.e. we subtract $\xoverline$(as a row vector) from each individual row of $X$.
Then, using (\ref{svd}) we have
$\Xresid = U S V^T$
where $S$ is a non-negative diagonal $N \times d$ matrix, with diagonal elements $\lambda_i$, for $1 \leq i \leq d$, 
and $U$ and $V$ are real-valued orthogonal matrices, of dimension $N \times N$ and $d \times d$ respectively.
We define the Patnaik-Pearson intrinsic dimension of $X$ as
\begin{equation}
\label{pp_int_dim}
\PatnaikPearson(X) = \nu(\blambda) = 
{\frac 
{(\sum_{i=1}^d {\lambda_i})^2}
{\sum_{i=1}^d \lambda_i^2}
}
\end{equation}
\end{definition}
Note that this resembles, but is distinct from, the stable rank of a matrix, which for $\Xresid$ is
\begin{equation}
\label{stable_rank}
\stablerank(\Xresid) = 
{\normXresid}_F^2 
/ {\normXresid}_{op}^2 
= \sum_{i=1}^d 
\lambda_i^2 / 
{\lambda_{\max}}
\end{equation}
The function $\nu(\blambda)$ appears in the literature in a wide range of situations under various names, for example ``participation ratio", ``concentration ratio", ``effective dimension" or ``effective sample size".
Most relevant for this work, the inverse of $\nu(\blambda)$ appears in Cizeau and Bouchaud \cite{levy_matrices} as the ``inverse participation ratio". 

Note also that we have defined $\PatnaikPearson(X)$ for an $N \times d$ data manifold $X$, but the definition makes sense for any matrix, so given (dimensionally compatible) matrices $A$, $B$ we can consider $\PatnaikPearson(A)$, $\PatnaikPearson(B)$ and $\PatnaikPearson(AB)$.  
In particular we can apply this to the weight matrices $W$ of neural networks, and compare $\PatnaikPearson(XW)$ with $\PatnaikPearson(X)$ and $\PatnaikPearson(W)$.

\begin{definition} 
In the sequel we will also work extensively with the normalised Patnaik-Pearson dimension ${\tfrac{1}{d}} \PatnaikPearson(X)$, which we will sometimes refer to as ``nu/d". 
\end{definition}

\begin{theorem} Provided $X = \Xresid$, if $O$ is an orthogonal $N \times N$ matrix, and $Q$ an orthogonal $d \times d$ matrix, then 
\begin{equation}
\label{pp_invariant_under_orthogonals}
\PatnaikPearson(OX) = \PatnaikPearson(X) = \PatnaikPearson(XQ) = \PatnaikPearson(OXQ)
\end{equation}
and further
\begin{equation}
\label{pp_invariant_under_transpose}
\PatnaikPearson(X^T) = \PatnaikPearson(X)
\end{equation}
\end{theorem}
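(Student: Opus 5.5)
The plan is to reduce everything to the fact that $\PatnaikPearson$ depends only on the singular values of the centred matrix, and that these singular values are unchanged by orthogonal multiplication on either side and by transposition. Since the definition first replaces $X$ by $\Xresid$, the one point that needs care is that the centring step commutes with the operations in (\ref{pp_invariant_under_orthogonals}) and (\ref{pp_invariant_under_transpose}). This is exactly why the hypothesis $X = \Xresid$ is imposed.

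\textbf{Right multiplication.} For $XQ$ the column mean is $\overline{XQ} = Q^T \xoverline$ (as a column vector). Subtracting it from each row gives $(XQ)_{\mathrm{resid}} = \Xresid Q = XQ$. If $X = U S V^T$, then $XQ = U S (Q^T V)^T$, and $Q^T V$ is orthogonal. So this is an SVD of $XQ$ with the same diagonal $S$. Hence the vector $\blambda$ is unchanged and $\PatnaikPearson(XQ) = \nu(\blambda) = \PatnaikPearson(X)$.

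\textbf{Left multiplication.} Here $OX = (OU) S V^T$ is again an SVD with the same $S$. The issue is centring: the row mean of $OX$ is ${\tfrac{1}{N}} X^T O^T {\bf 1}_N$, which need not vanish for a general orthogonal $O$.

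\begin{itemize}
\item The paper implicitly intends the statement for $OX$ to be read as the singular values of $OX$ itself, which is legitimate when $OX$ is again centred.
\item $OX$ is centred whenever $O$ fixes ${\bf 1}_N$, or more generally whenever $O^T {\bf 1}_N$ lies in the left null space of $X$.
\item In general $O$ can destroy centring: take $N=2$, $X$ with rows $x, -x$, and $O$ a rotation by $\pi/2$. Then $OX$ has rows $x, x$ with zero residual, so even the "$\PP$ of the uncentred matrix" reading differs from $\PP(OX)$ as defined.
\end{itemize}

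So I will state and prove the claim under the reading "$\nu$ of the singular values of the matrix", i.e.\ treating $\PatnaikPearson$ as applied to an already-centred matrix without re-centring. I will add a remark that with re-centring the statement holds exactly when $O$ preserves ${\bf 1}_N^\perp$. Handling this centring subtlety is the main obstacle. The composite $OXQ$ then follows by combining the two cases.

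\textbf{Transpose.} $X^T = V S^T U^T$ has the same nonzero singular values as $X$, padded with zeros. Since $\nu$ is unaffected by appending zero entries (neither $\sum \lambda_i$ nor $\sum \lambda_i^2$ changes), $\nu$ agrees for $X$ and $X^T$. Again the statement is read without re-centring $X^T$, since centring the columns of $X$ does not centre its rows. With these conventions made explicit, every step is a one-line consequence of uniqueness of singular values.
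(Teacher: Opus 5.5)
Your proof is essentially the paper's, which simply declares the identities immediate from the definition: $\PatnaikPearson$ is $\nu$ of the singular values, and these are unchanged by orthogonal factors on either side and by transposition (up to zero padding, which $\nu$ ignores). Your centring caveat is correct and is something the paper passes over. Re-centring commutes with right multiplication but not with a general left factor $O$ or with transposition; for example, a $3\times 2$ matrix $X$ with orthonormal columns orthogonal to ${\bf 1}_3$ has $\PatnaikPearson(X)=2$ but re-centred $\PatnaikPearson(X^T)=1$. So reading $\PatnaikPearson$ here as $\nu$ of the singular values of the matrix itself is the right fix, and it is exactly how the paper later uses the result when it strips $U_X$ off $XW$. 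Your side remark's ``exactly when'' is too strong, since for a fixed $X$ your own weaker condition $X^T O^T {\bf 1}_N = 0$ already suffices.
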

{\bf Proof:} 
These are all immediate from the definition.
$\myqed$\\

\begin{figure}[htbp]
    \centering
    \includegraphics[width=0.5\textwidth]{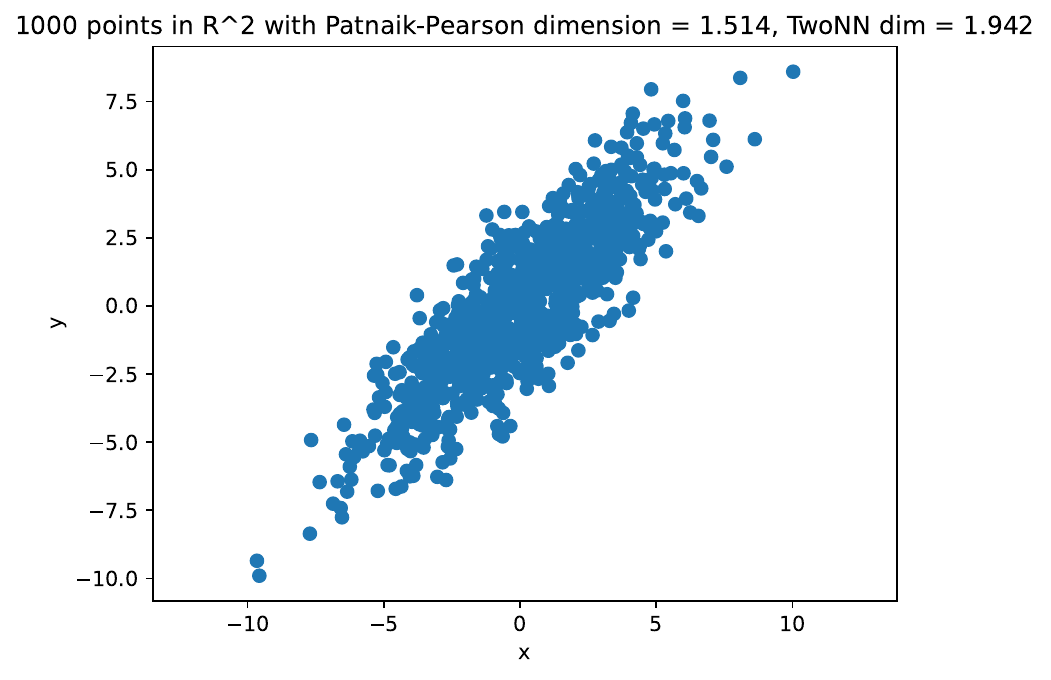}
    \caption{1000 points in $\RR^2$ with Patnaik-Pearson dimension 1.514, TwoNN dimension 1.942. This suggests that the Patnaik-Pearson dimension may be thought of as a ``global" measure of dimension, whereas the TwoNN dimension captures local dimensionality.}
    \label{fig:R_2_nu_eq_1_489_N_eq_1000.pdf}
\end{figure}

\begin{figure}
    \centering
	\includegraphics[width=0.45\textwidth]{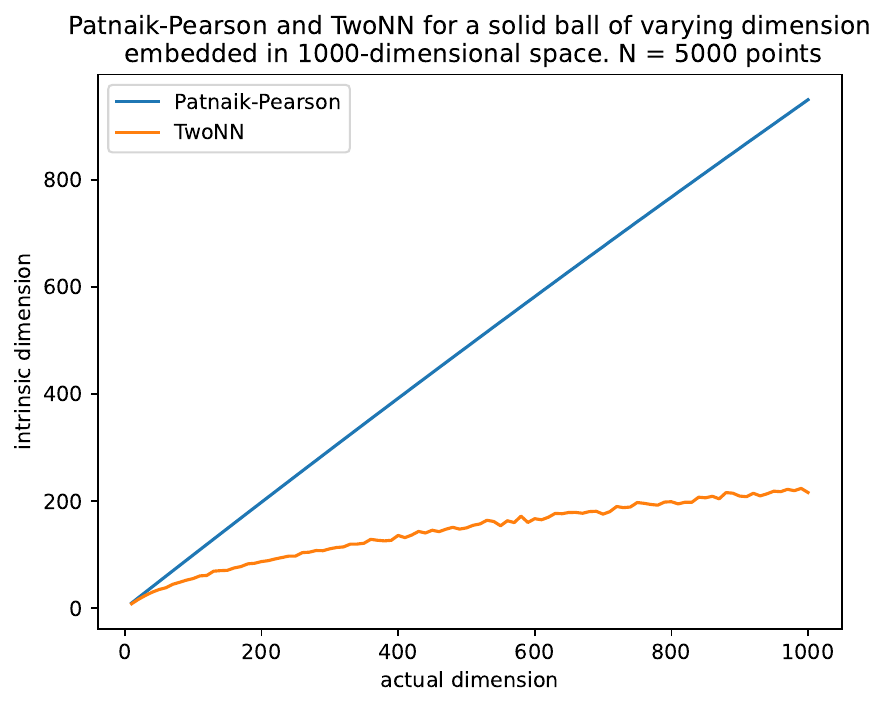}	
	\includegraphics[width=0.45\textwidth]{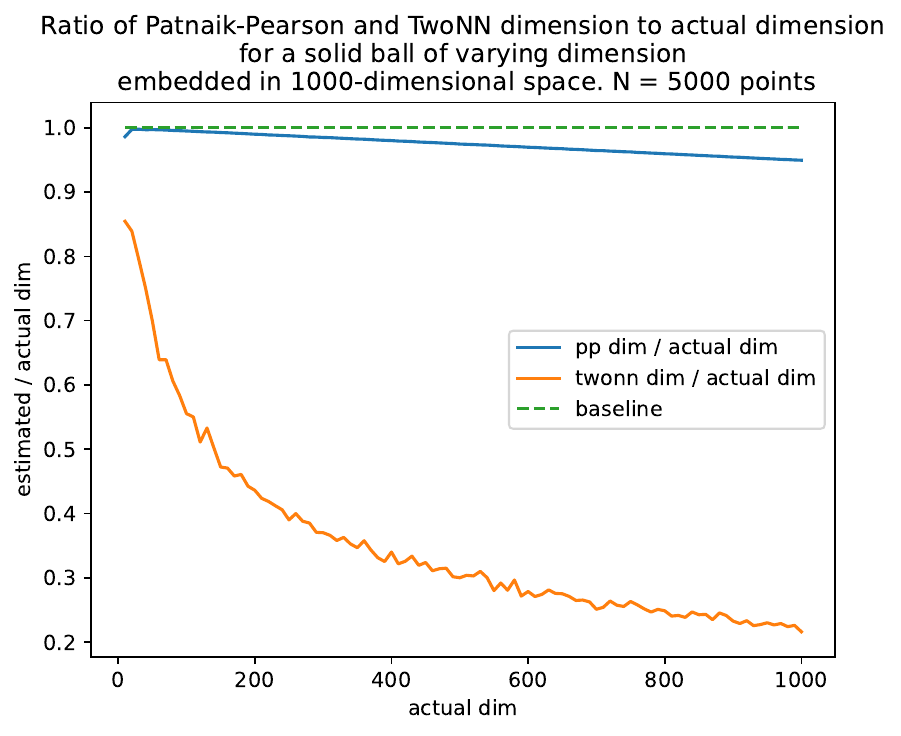}
    \caption{Patnaik-Pearson and TwoNN dimension estimates for a solid ball of dimension varying between 10 and 1000, in $\RR^{1000}$.} 
    \label{fig:pp_twonn_ball_d1000_two_figs.pdf}
\end{figure}

We establish a further useful result:\\

\begin{theorem}
\label{thm_h_Y_d} 
Suppose $Y$ is a non-negative random variable with finite moments $e_k = \EE(Y^k) < \infty$ for $k = 1, 2$, and $\EE(Y^2) \neq 0$.
Given iid $Y_i \sim Y$, for $i = 1, 2, \ldots d$, define
$$
h(Y,d) 
= {\frac{1}{d}} {\frac{(\sum_{i=1}^d Y_i)^2}{\sum_{i=1}^d Y_i^2}} 
= {\frac{ ({\tfrac{1}{d}} \sum_{i=1}^d Y_i)^2}{ {\tfrac{1}{d}} \sum_{i=1}^d Y_i^2}} 
$$
Then 
\begin{equation}
\label{lim_d_infty_h_Y_d}
\lim_{d \rto \infty} h(Y,d) = {\frac{ \EE(Y)^2}{\EE(Y^2)}}
\end{equation}
Furthermore, for $e_3$ and $e_4$ also finite, then 
$$h(Y,d)
\approx
\left(
{\frac{ \EE(Y)^2}{\EE(Y^2)}}
\right) 
\left[
1 + {\tfrac{1}{\sqrt{d}}} \sigma(Q) Z
\right]
$$ 
where $Z \sim N(0,1)$ and $\sigma(Q)$ is a constant that we find in (\ref{defn_sigma_Q}).
\end{theorem}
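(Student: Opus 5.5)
The plan is the strong law of large numbers for the limit, then the multivariate central limit theorem plus the delta method for the fluctuation statement. Write
$$A_d = {\tfrac{1}{d}} \sum_{i=1}^d Y_i, \quad B_d = {\tfrac{1}{d}} \sum_{i=1}^d Y_i^2,$$
so that $h(Y,d) = A_d^2 / B_d$.

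\textbf{The limit (\ref{lim_d_infty_h_Y_d}).} Since $e_1, e_2 < \infty$, the strong law of large numbers gives $A_d \rto e_1$ and $B_d \rto e_2$ almost surely. The map $(a,b) \mapsto a^2/b$ is continuous at $(e_1, e_2)$ because $e_2 \neq 0$. Hence $h(Y,d) \rto e_1^2/e_2$ almost surely, and therefore also in probability.

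\textbf{The Gaussian correction.} Now assume $e_3, e_4 < \infty$.
\begin{enumerate}
\item The vector $(Y_i, Y_i^2)$ has finite covariance matrix
$$\Sigma = \begin{pmatrix} e_2 - e_1^2 & e_3 - e_1 e_2 \\ e_3 - e_1 e_2 & e_4 - e_2^2 \end{pmatrix}.$$
\item By the multivariate CLT,
$$A_d = e_1 \left(1 + {\tfrac{1}{\sqrt{d}}} {\tfrac{\sigma(Y)}{e_1}} Z_1\right), \quad B_d = e_2 \left(1 + {\tfrac{1}{\sqrt{d}}} {\tfrac{\sigma(Y^2)}{e_2}} Z_2\right),$$
up to $o_P(d^{-1/2})$. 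Here $Z_1, Z_2$ are standard normals with correlation $\rho = (e_3 - e_1 e_2)/(\sigma(Y)\sigma(Y^2))$.
\item Expand $(1+x)^2/(1+y) = 1 + 2x - y + O(x^2 + y^2)$. This gives
$$h(Y,d) = \fracEOneSqrdOverETwo \left[ \formulaFour \right],$$
which matches the macros the paper has set up. The $O(1/d)$ term comes from the second-order terms, which are $O_P(1/d)$.
\item Collapse the linear combination $2\tfrac{\sigma(Y)}{e_1} Z_1 - \tfrac{\sigma(Y^2)}{e_2} Z_2$ into a single $\sigma(Q) Z$. Equivalently, define the scalar variable
$$Q = {\tfrac{2Y}{e_1}} - {\tfrac{Y^2}{e_2}},$$
whose variance is the gradient quadratic form of the delta method. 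This gives
$$\sigma(Q)^2 = {\tfrac{4\sigma(Y)^2}{e_1^2}} + {\tfrac{\sigma(Y^2)^2}{e_2^2}} - {\tfrac{4(e_3 - e_1 e_2)}{e_1 e_2}}.$$
This expression is the formula to record as (\ref{defn_sigma_Q}).
\end{enumerate}

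\textbf{Main obstacle.} The difficulty is making ``$\approx$'' rigorous. Convergence in distribution follows cleanly from the delta method:
$$\sqrt{d}\,\bigl(h(Y,d)/(e_1^2/e_2) - 1\bigr) \Rightarrow N(0, \sigma(Q)^2).$$
However, the statement's $O(1/d)$ phrasing suggests an expansion in expectation. That would need uniform integrability of $1/B_d$, or truncation on the event $\{B_d > e_2/2\}$, whose complement has probability $O(1/d)$ by Chebyshev using $e_4 < \infty$.

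I would state the result in the distributional sense, and remark that the bias term is $O(1/d)$ under that truncation argument. One further caveat: if $e_1 = 0$, then $Y = 0$ almost surely, which contradicts $e_2 \neq 0$. So division by $e_1$ is legitimate.
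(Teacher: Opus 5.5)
Your proposal is correct and is essentially the paper's argument: the strong law for the limit, then the CLT for the two sample means, a first-order expansion of the ratio, and collapsing the correlated pair $Z_1, Z_2$ into a single $\sigma(Q) Z$. Your delta-method phrasing, with the scalar $Q = 2Y/e_1 - Y^2/e_2$, simply makes the paper's heuristic ``$\approx$'' precise as convergence in distribution. Your correlation $\rho = (e_3 - e_1 e_2)/(\sigma(Y)\sigma(Y^2))$ is the correct one: the paper's intermediate display carries a spurious factor $\tfrac{1}{d}$, though its final formula (\ref{defn_sigma_Q}) agrees with yours. For your bias remark, note that $0 \le h(Y,d) \le 1$ by Cauchy--Schwarz, so the event $\{B_d \le e_2/2\}$ contributes at most its $O(1/d)$ probability and no uniform integrability of $1/B_d$ is needed.
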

{\bf Proof:} Since $e_1$ and $e_2$ are finite, then by Kolmogorov's Strong Law of Large Numbers (SLLN) \cite{modelling_extremal}
$$
{\tfrac{1}{d}} \sum_{i=1}^d Y_i \quad \rto^{a.s} \quad \EE(Y),
\quad
{\tfrac{1}{d}} \sum_{i=1}^d Y_i^2 \quad \rto^{a.s} \quad \EE(Y^2)
$$ 
and (\ref{lim_d_infty_h_Y_d}) follows. 
To estimate the rate of convergence, for large $d$,
$$
{\tfrac{1}{d}} \sum_{i=1}^d Y_i \sim e_1 + {\tfrac{\sigma(Y)}{\sqrt{d}}} Z_1
\quad \implies \quad
{\tfrac{1}{d^2}} (\sum_{i=1}^d Y_i)^2 \sim e_1^2 + {\tfrac{2 e_1 \sigma(Y)}{\sqrt{d}}} Z_1 + O( {\tfrac{1}{d}})
$$
and also, provided $e_3$ and $e_4$ are finite,
$$
{\tfrac{1}{d}} \sum_{i=1}^d Y_i^2 \sim e_2 + {\tfrac{\sigma(Y^2)}{\sqrt{d}}} Z_2
$$
where $Z_1$, $Z_2$ are $N(0,1)$. 
It follows that
$$
h(Y,d) = 
{\frac{1}{d}}
{\frac
{(\sum_{i=1}^d Y_i)^2}
{\sum_{i=1}^d Y_i^2}
} 
\sim 
\left(
e_1^2 + {\tfrac{2 e_1 \sigma(Y)}{\sqrt{d}}} Z_1
\right)
{\left(
e_2 + {\tfrac{\sigma(Y^2)}{\sqrt{d}}} Z_2
\right)}^{-1}
$$
\begin{equation}
\label{hY_intermediate}
 = \formulaFive
\end{equation}
Now, $Z_1$ and $Z_2$ are correlated, with
$$
\rho = \Corr(Z_1, Z_2) = \Cov(Z_1, Z_2) 
= \Cov 
\left(
{\tfrac{1}{\sigma(Y) \sqrt{d}}} \sum_{i=1}^d Y_i,
{\tfrac{1}{\sigma(Y^2) \sqrt{d}}} \sum_{j=1}^d Y_j^2
\right)
$$
$$
=
{\frac{1}{d \sigma(Y) \sigma(Y^2)}} \Cov( \sum_{i=1}^d Y_i , \sum_{j=1}^d Y_j^2)
= 
{\frac{1}{d}}
\left(
{\frac{e_3 - e_2 e_1}{\sigma(Y) \sigma(Y^2)}}
\right)
= 
{\frac{1}{d}} \Corr( Y^2, Y)
$$
In general, if $Q = a Z_1 + b Z_2$, with $\Corr(Z_1,Z_2) = \rho$, then $Q = \sigma(Q) Z$, with $Z \sim N(0,1)$ and
$\sigma(Q)^2 = a^2 + 2 \rho ab + b^2$. 
Applying this to (\ref{hY_intermediate}) gives
\begin{equation}
\label{hY_final}
h(Y,d)
\approx
\left(
\fracEOneSqrdOverETwo
\right) 
\left[
1 + {\tfrac{1}{\sqrt{d}}} \sigma(Q) Z
\right]
\end{equation}
with 
\begin{equation}
\label{defn_sigma_Q}
\sigma(Q) = 
{\left(  
{\frac{4 {\sigma(Y)}^2}{e_1^2}} 
- 
{\frac{4(e_3 - e_2 e_1)}{e_2 e_1}}
+ {\frac{{\sigma(Y^2)}^2}{e_2^2}}
\right)}^{1/2}
\end{equation}
This completes the proof. $\myqed$\\

Since our definition of Patnaik-Pearson intrinsic dimension was motivated by the Gaussian point cloud generative model presented in Section \ref{generative_model},
it is natural to ask, for what class of data distributions beyond this point cloud model does $\PatnaikPearson(X)$ give a consistent estimator of the true intrinsic dimension of the data manifold $X$?
Further, noting the universality results of the L2N2 estimator of Ong et al. \cite{universal_nne},
are there any corresponding universality results for $\PatnaikPearson(X)$, or does it have a systematic
bias for specific distribution families?

\subsection{Uniform distribution}
Suppose the $\lambda_i$ are drawn from a uniform $U[0,1]$ distribution.
For $Y \sim U[0,1]$, then $\EE(Y^k) = {\tfrac{1}{k+1}}$. 
Hence by (\ref{hY_final}),
\begin{equation}
\label{nu_over_d_uniform}
h(Y,d) 
\approx {\frac{3}{4}} 
\left[ 
1 + 
{\frac{\sigma(Q)}{\sqrt{d}}}
Z 
\right] 
\quad 
{\mathrm{as}} 
\quad d \rto \infty
\end{equation} 
with $\sigma(Q)  = \sqrt{{\tfrac{2}{15}}}$. 
So for large $d$, ${\frac{1}{d}} \nu(\blambda) \approx {\frac{3}{4}}$, with standard deviation 
$${\frac{3 \sigma(Q)}{4 \sqrt{d}}} = {\frac{3}{4 \sqrt{d}}} \sqrt{{\frac{2}{15}}} = {\frac{1}{\sqrt{d}}} \cdot {\frac{\sqrt{3}}{2 \sqrt{10}}} = {\frac{0.2739}{\sqrt{d}}}$$
This is confirmed numerically by Figure \ref{fig:nu_over_d_uniform_N_10000_d_100.pdf} (b) - here the predicted and observed standard deviations are 0.0274 and 0.0272 respectively.

\begin{figure}
    \centering
	\includegraphics[width=0.45\textwidth]{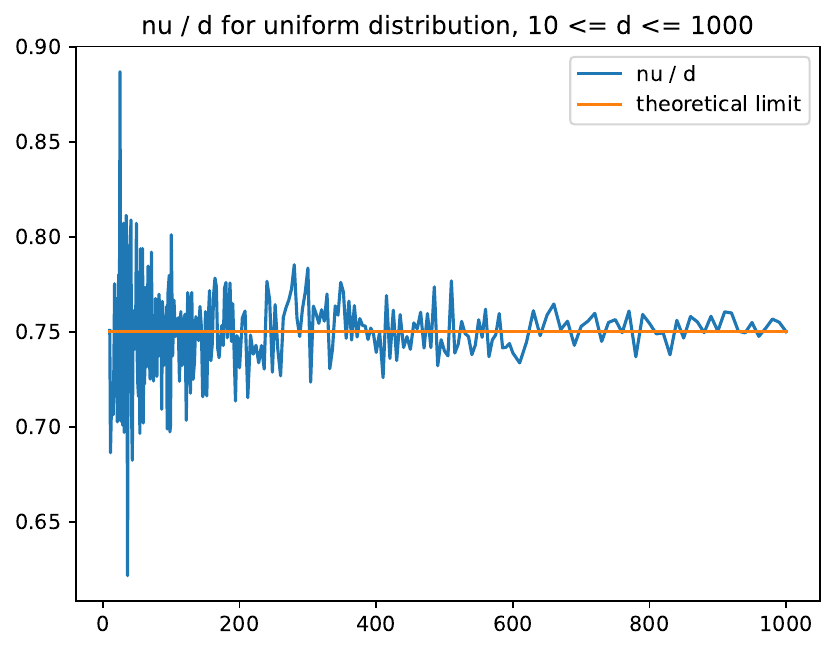} 
    \includegraphics[width=0.45\textwidth]{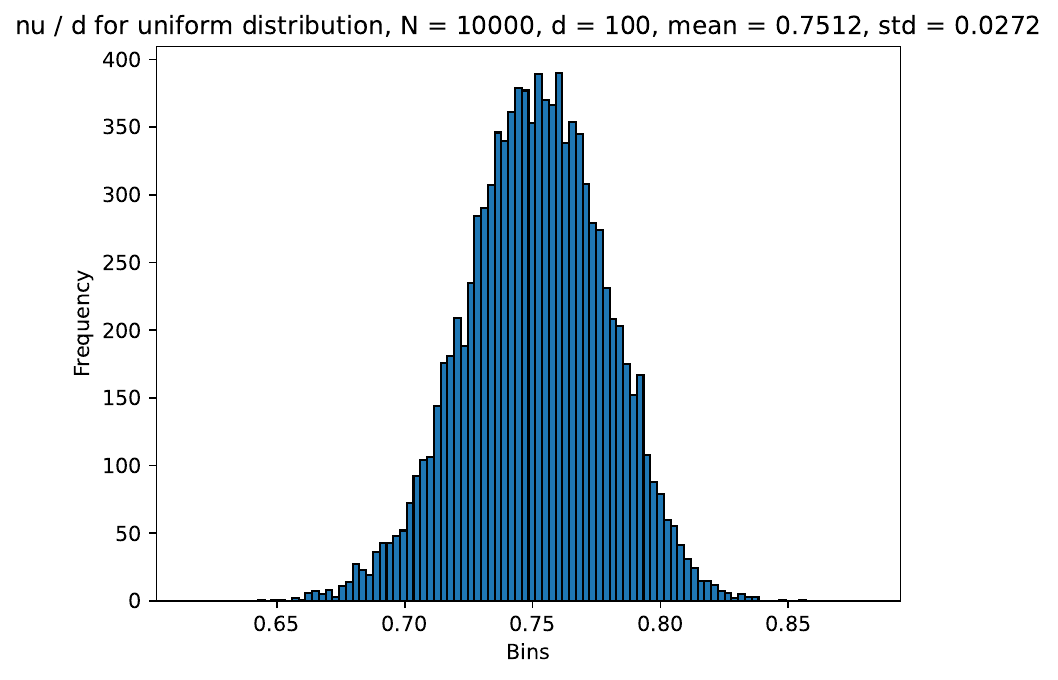}
    \caption{Uniform distribution : numerical tests of (\ref{nu_over_d_uniform}), (a) for $10 \leq d \leq 1000$, (b) for 10,000 samples with $d = 100$} 
    \label{fig:nu_over_d_uniform_10_leq_d_leq_1000.pdf}
	\label{fig:nu_over_d_uniform_N_10000_d_100.pdf}
\end{figure}
 
\subsection{Marchenko-Pastur distribution}

For 
$Y 
\sim 
\MP
(c, \sigma^2)$, 
then $\EE(Y) = \sigma^2$, and $\Var(Y) = \sigma^4 c$, hence $\EE(Y^2) = \sigma^4 (c + 1)$
Hence if we draw our $\lambda_i$ from the distribution of $Y$, then applying (\ref{hY_final})  
we have
\begin{equation}
\label{marchenko_pastur_nu_over_d}
\nu(\blambda) \approx {\frac{d (\sigma^2)^2}{\sigma^4 (c + 1)}} = {\frac{d}{c+1}} 
\quad
\implies \quad \lim_{d \rto \infty} {\frac{1}{d}} \nu(\blambda) = {\frac{1}{c+1}}
\end{equation}

\subsection{Pareto distribution with tail exponent $\alpha$}
\label{subsection_pp_dim_meets_pareto}

Suppose the $\lambda_i$ are drawn from a Pareto distribution with tail exponent $\alpha > 0$ (\ref{pareto_alpha_pdf_X}), i.e. $\PP(\lambda > x) = x^{-\alpha}$, for $x \geq 1$. The corresponding pdf is 
\begin{equation}
\label{pareto_alpha_pdf}
f(t) = {\frac{\alpha}{t^{\alpha+1}}}, \, t \geq 1, \quad f(t) = 0, \, t < 1
\end{equation}
Define 
\begin{equation}
\label{defn_s_C_alpha_nu_alpha_d}
s = {\frac{1}{\alpha}}, 
\quad 
C(\alpha) = {\frac{(1-2s)}{(1-s)^2}} = {\frac{\alpha(\alpha - 2)}{(\alpha - 1)^2}}, \,\, \alpha \neq 1,
\quad
\nu(\alpha,d) = \EE \left(
{
\frac
{
(\sum_{i=1}^d \lambda_i)^2
}
{
\sum_{i=1}^d \lambda_i^2
}
}
\right)
\end{equation}

\begin{theorem} Define 
$\nuinfty (\alpha) = \lim_{d \rto \infty} {\tfrac{1}{d}}\nu(\alpha,d)$. 
Then 
\begin{equation}
\label{defn_nu_infty}
\nuinfty (\alpha) =  \left\{
\begin{aligned}
C(\alpha) \, : \, \alpha \geq 2 \\
0 \, : \, \alpha \leq 2
\end{aligned}
\right.
\end{equation}
This is illustrated in Figure \ref{fig:nu_over_d_as_function_of_alpha.pdf}.
\end{theorem}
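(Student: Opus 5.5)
We study $\tfrac{1}{d}\nu(\alpha,d)=\EE(h(\lambda,d))$, where $h(\lambda,d)=\bigl(\tfrac1d\sum\lambda_i\bigr)^2/\bigl(\tfrac1d\sum\lambda_i^2\bigr)$ as in Theorem \ref{thm_h_Y_d}, and split into the regimes $\alpha>2$, $1<\alpha\le 2$ and $\alpha\le 1$.

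\emph{Case $\alpha>2$.} The moments of the Pareto law are $e_k=\EE(\lambda^k)=\tfrac{\alpha}{\alpha-k}$ for $k<\alpha$. Hence $e_1^2/e_2=\tfrac{\alpha^2}{(\alpha-1)^2}\cdot\tfrac{\alpha-2}{\alpha}=C(\alpha)$. Theorem \ref{thm_h_Y_d} gives $h(\lambda,d)\rto C(\alpha)$ almost surely. Since $0\le h\le 1$ deterministically, by Cauchy--Schwarz, dominated convergence upgrades this to convergence of expectations, so $\nuinfty(\alpha)=C(\alpha)$. No higher moments are needed, because we only use the SLLN part of Theorem \ref{thm_h_Y_d}, not the CLT correction.

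\emph{Case $1<\alpha<2$.} Here $e_1<\infty$ and the numerator satisfies $(\tfrac1d\sum\lambda_i)^2\rto e_1^2$ a.s. The denominator $\tfrac1d\sum\lambda_i^2$ has infinite mean. Since $\lambda_i^2\ge 0$ with $\EE(\lambda^2)=\infty$, the SLLN for nonnegative variables with infinite mean gives $\tfrac1d\sum\lambda_i^2\rto\infty$ a.s. (truncate at level $M$, apply the SLLN, then let $M\rto\infty$). Thus $h\rto 0$ a.s., and bounded convergence gives $\nuinfty=0$.

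\emph{Case $\alpha=2$.} The same argument applies, since $\EE(\lambda^2)=\int_1^\infty 2t^{-1}\,dt=\infty$. Moreover $C(2)=0$, so the two branches agree at $\alpha=2$.

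\emph{Case $\alpha\le 1$.} The mean is now infinite, so the previous argument is unavailable and this is the main obstacle. I would bound $h$ using the largest order statistic $M_d=\max\lambda_i$. Write $S_1=\sum\lambda_i$ and $S_2=\sum\lambda_i^2\ge M_d^2$, so $h=S_1^2/(dS_2)\le S_1^2/(dM_d^2)$. For $\alpha<1$, the generalised CLT (stable limits) gives $S_1\approx d^{1/\alpha}A$ and $M_d\approx d^{1/\alpha}B$ jointly in distribution, with $S_1/M_d$ converging in law to a finite random variable; this is the single big jump principle (\ref{single_big_jump}) in its regularly varying form. Hence $h=O_P(1/d)\rto 0$ in probability. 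For $\alpha=1$, $S_1\sim d\log d$ while $M_d\sim d$, so $h\lesssim (\log d)^2/d\rto 0$. Boundedness of $h$ again passes this to expectations.

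The technical point needing care is establishing tightness of $S_1/M_d$ in the case $\alpha<1$. I would either cite the known result that $S_d/M_d$ converges in distribution for $\alpha<1$ (Darling), or argue directly by truncating at $\epsilon d^{1/\alpha}$: the sum of the truncated terms has expectation of order $\epsilon^{1-\alpha}d^{1/\alpha}$, while $M_d/d^{1/\alpha}$ is asymptotically Fréchet and hence bounded away from zero with high probability.
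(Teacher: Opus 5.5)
Your proposal is correct and takes the same route as the paper: Theorem \ref{thm_h_Y_d} (the SLLN) for $\alpha>2$, the SLLN with $\EE(\lambda^2)=\infty$ for $1<\alpha<2$, and domination by the largest value for $\alpha<1$. It is also more careful than the paper's version in three places: the paper never justifies passing from almost-sure convergence to convergence of the expectation, which your bound $0\le h\le 1$ handles; it leaves $\alpha=2$ and $\alpha=1$ to the later heuristic rate analysis; and it asserts $\nu(\alpha,d)\approx 1$ for $\alpha<1$, whereas $S_1^2/S_2$ in fact has a nondegenerate limit law, so only the tightness of $S_1/M_d$ that you use is needed (in your direct truncation sketch, also bound the $O_P(1)$ many terms exceeding $\epsilon d^{1/\alpha}$ by $M_d$ each).
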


\begin{figure}
    \centering
    \includegraphics[width=0.45\textwidth]{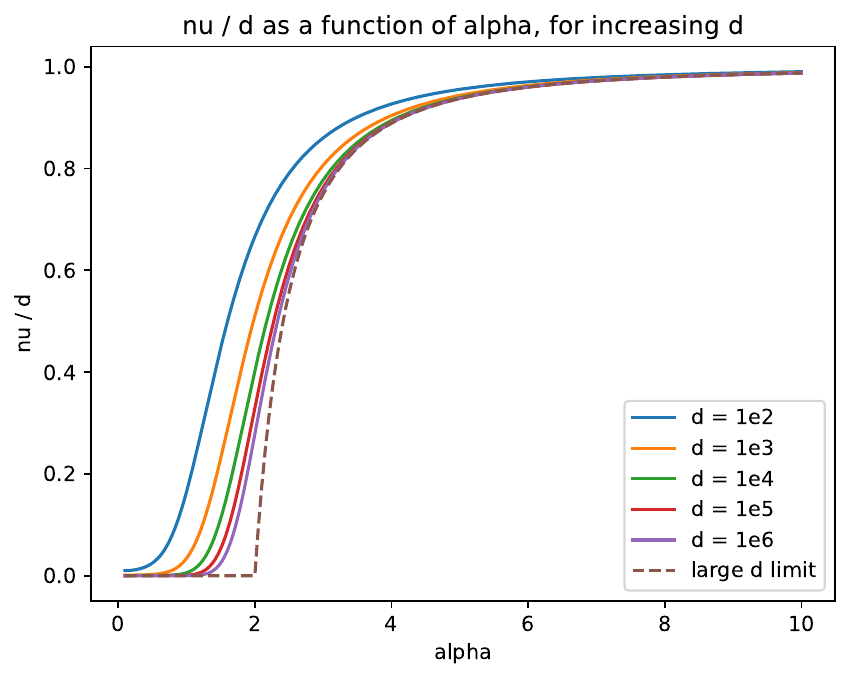}
    \includegraphics[width=0.45\textwidth]{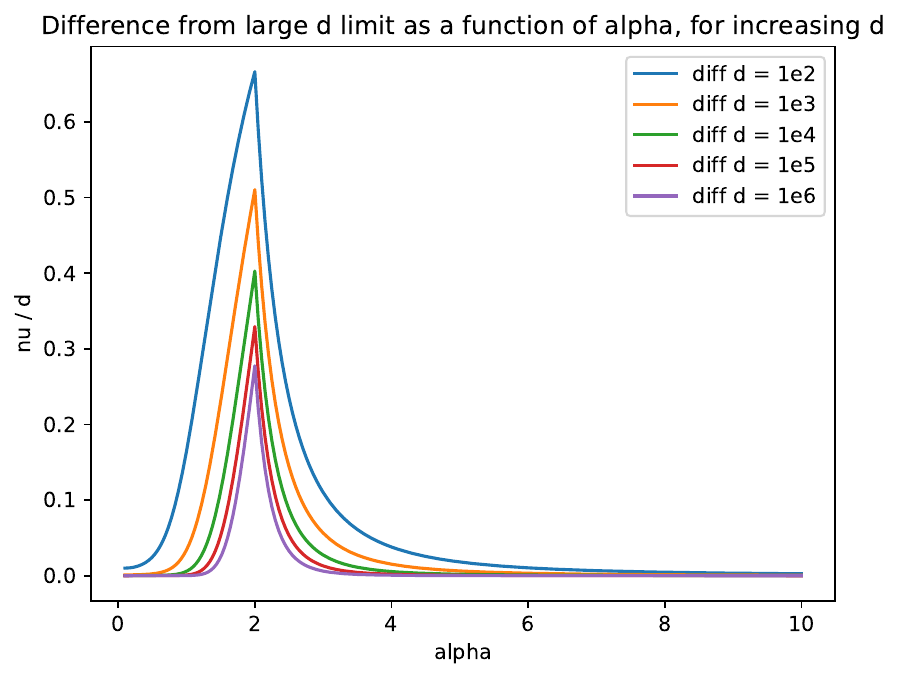} 
    \caption{(a) $\tfrac{\nu(\alpha,d)}{d}$ as $\alpha$ varies, for $d$ increasing from 100 to 1,000,000, together with $\nuinfty(\alpha)$.\\ (b) Difference of $\tfrac{\nu(\alpha,d)}{d}$ from conjectured limit as $\alpha$ varies, for $d$ increasing from 100 to 1,000,000.} 
	\label{fig:nu_over_d_as_function_of_alpha.pdf}
	\label{fig:nu_over_d_difference_from_large_d_limit.pdf}
\end{figure}

{\bf Proof:} For $\alpha > 2$, by Theorem \ref{thm_h_Y_d}, $\nu_\infty (\alpha)$ = $\EE(\lambda)^2 / \EE(\lambda^2)$, with
$$
\EE(\lambda) = {\frac{\alpha}{\alpha - 1}}, 
\quad 
\EE(\lambda^2) = {\frac{\alpha}{\alpha - 2}}
$$
and the result follows.\\

For $2 > \alpha > 1$, then $\EE(Y)$ is finite, but $\EE(Y^2)$ is infinite. By Kolmogorov's SLLN \cite{modelling_extremal}, 
$$
{\tfrac{1}{d}} \sum_{i=1}^d \lambda_i \quad \rto^{a.s} \quad \EE(\lambda) < \infty, 
\quad
{\tfrac{1}{d}} \sum_{i=1}^d \lambda_i^2 \quad \rto^{a.s} \quad \infty
$$
Hence ${\tfrac{1}{d}} \nu(\alpha,d) \rto 0$ as $d \rto \infty$.\\

For $1 > \alpha$, then both 
${\tfrac{1}{d}} \sum_{i=1}^d \lambda_i$ 
and 
${\tfrac{1}{d}} \sum_{i=1}^d \lambda_i^2$ 
diverge, however in this extremely heavy tailed situation, the largest value 
$\lambda_{\max}$ 
dominates both terms (\ref{single_big_jump}), with
$\nu(\alpha,d) 
\approx 
{\frac{({\lambda_{\max}})^2}{\lambda_{\max}^2}} 
= 1$ 
for large $d$.
Hence 
${\tfrac{1}{d}} \nu(\alpha,d) \rto 0$.
$\myqed$\\

We now address the rate of convergence of ${\tfrac{1}{d}} \nu(\alpha,d)$ to $\nu_\infty (\alpha)$, for $\alpha \geq 1$.
The CDF corresponding to (\ref{pareto_alpha_pdf}) is:
\begin{equation}
\label{pareto_alpha_CDF}
F(x) = \PP( \lambda \leq x) = \int_1^x  {\frac{\alpha}{t^{\alpha+1}}} \, dt  = 1 - x^{-\alpha}
\end{equation}

Hence $F^{-1} (y) = (1 - y)^{-1/\alpha}$, for $0 \leq y \leq 1$, and
$$\lambda_k = F^{-1} \left(  {\frac{k + 1}{d + 2}}   \right) = {\left( {\frac{d + 1 - k}{d + 2}} \right)}^{-1/\alpha}$$
Using (\ref{defn_s_C_alpha_nu_alpha_d}),

\begin{equation}
\label{sum_lambda_k_squared_integral}
\sum_{k=1}^d \lambda_k 
\approx d^s \int_1^d {\frac{1}{x^s}} \, dx, \quad \quad
\sum_{k=1}^d \lambda_k^2 
\approx d^{2s} \int_1^d {\frac{1}{x^{2s}}} \, dx
\end{equation}
So for $\alpha \neq 1, 2$, then 
$$
\sum_{k=1}^d \lambda_k \approx {\frac{d^s}{1-s}} \left[ d^{1-s} - 1 \right], 
\quad 
\sum_{k=1}^d \lambda_k^2 \approx {\frac{d^{2s}}{1-2s}} \left[ d^{1-2s} - 1 \right]
$$
Hence
\begin{equation}
\label{nu_alpha_d_one}
\nu(\alpha, d) \approx {\frac{(1-2s)}{(1-s)^2}} \cdot {\frac{[ d^{1-s} - 1]^2}{[ d^{1-2s} - 1]}} 
= C( \alpha) {\frac{[ d^{1-s} - 1]^2}{[ d^{1-2s} - 1]}}
\end{equation}
We treat the four cases $\alpha > 2$, $\alpha = 2$, $2 > \alpha > 1$ and $\alpha = 1$ separately.

\begin{itemize}
\item{
$\alpha > 2$: In this case, $0 < s < \half$, so $1 - s > \half$ and $1 - 2s > 0$. So
$$
\nu(\alpha, d) 
\approx C(\alpha) {\frac{d^{2-2s} [1 - d^{s-1}]^2}{d^{1-2s} [1 - d^{2s -1}]}}
= C(\alpha) d [1 - d^{s-1}]^2 [1 - d^{2s -1}]^{-1}
= C(\alpha) d [1 + d^{2s-1} - 2 d^{s-1} + \ldots]
$$
Therefore
\begin{equation}
\label{nu_over_d_approx1}
{\tfrac{1}{d}} \nu(\alpha,d) \approx C(\alpha) [1 + d^{2s-1} + O(d^{s-1})] 
= C(\alpha) [1 + d^{ -\left( {\tfrac{\alpha - 2}{\alpha}} \right)} + O( d^{ - \left( {\tfrac{\alpha - 1}{\alpha}} \right)} )]
\end{equation}
So, for $\alpha >2$, 
$\lim_{d \rto \infty} {\tfrac{1}{d}} \nu(\alpha,d) = C(\alpha)$,
and, further, for large $d$ then
\begin{equation}
\label{ln_nu_alpha_over_d_minus_C_alpha_vs_ln_d}
\ln( {\tfrac{1}{d}} \nu(\alpha,d) - C(\alpha)) = \ln ( C(\alpha)) - \left( {\tfrac{\alpha - 2}{\alpha}} \right) \ln(d)
\end{equation}
For numerical confirmation of this, see Figure \ref{fig:ln_nu_over_d_alpha_eq_2_5.pdf}.

\begin{figure}
    \centering
    \includegraphics[width=0.45\textwidth]{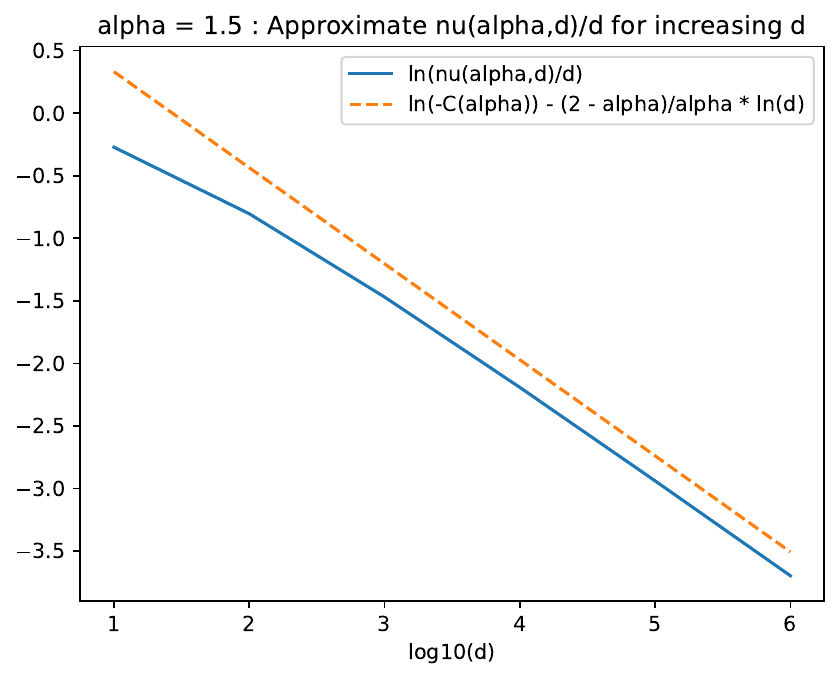}
    \includegraphics[width=0.45\textwidth]{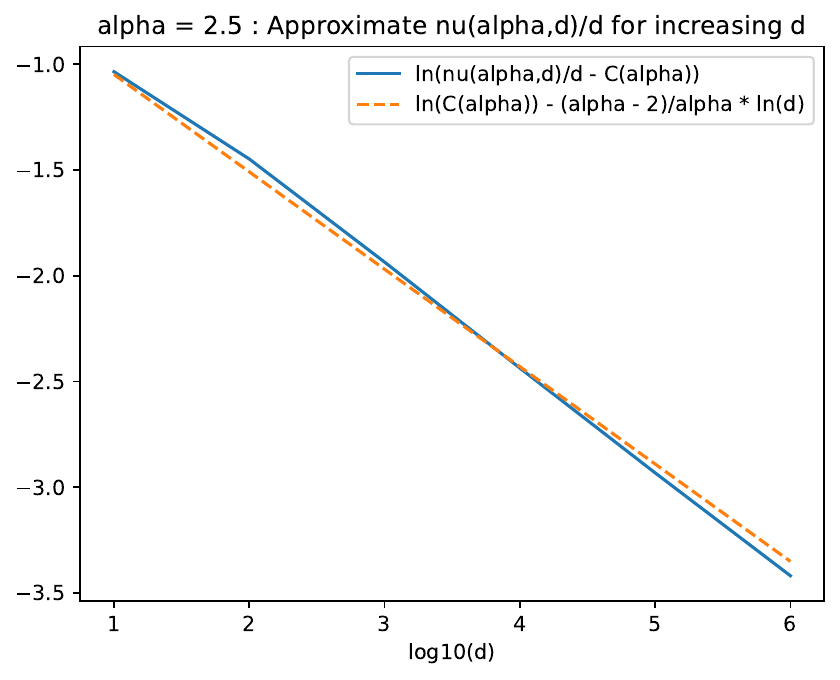} 
    \caption{Numerical tests of (\ref{ln_nu_alpha_over_d_minus_C_alpha_vs_ln_d},
	\ref{ln_nu_alpha_d_over_d_vs_ln_d_3_geq_alpha_geq_2}),
	for $\alpha = 1.5$ and $\alpha = 2.5$ and increasing $d$.} 
	\label{fig:ln_nu_over_d_alpha_eq_1_5.pdf}
    \label{fig:ln_nu_over_d_alpha_eq_2_5.pdf}
\end{figure}
}
\item{
$\alpha = 2$: 
Then $s = \half$, and (\ref{sum_lambda_k_squared_integral}) becomes
$$
\sum_{k=1}^d \lambda_k  \approx 2d \left[ 1 - {\tfrac{1}{\sqrt{d}}} \right], \quad
\sum_{k=1}^d \lambda_k^2 \approx d \ln(d)
$$
Hence
$$\nu(\alpha,d) \approx {\frac{4 d^2 {\left[ 1 - {\tfrac{1}{\sqrt{d}}} \right]}^2}{d \ln(d)}} = {\frac{4d}{\ln(d)}} \left[ 1 - {\tfrac{2}{\sqrt{d}}} + {\tfrac{1}{d}} \right]
$$
So for large $d$,
$$
{\tfrac{1}{d}} \nu(2,d) \approx {\frac{4}{\ln(d)}} + O\left( {\frac{1}{\sqrt{d} \ln(d)}} \right)
$$
thus $\lim_{d \rto \infty} {\tfrac{1}{d}} \nu(2,d) = 0$, 
and, for large $d$,
\begin{equation}
\label{ln_nu_alpha_d_over_d_vs_ln_ln_d_alpha_eq_2}
\ln({\tfrac{1}{d}} \nu(2,d)) \approx \ln(4) - \ln( \ln(d))
\end{equation}
Figure \ref{fig:ln_nu_over_d_alpha_eq_1_2.pdf} illustrates (\ref{ln_nu_alpha_d_over_d_vs_ln_ln_d_alpha_eq_2}).
}
\item{$2 > \alpha > 1$:
So $\half < s < 1$, and $0 < 1 - s < \half$, $-1 < 1 - 2s < 0$. Note that $C(\alpha) < 0$ for this range of $\alpha$. 
Hence (\ref{nu_alpha_d_one}) becomes:
$$
\nu(\alpha, d) 
\approx C( \alpha) {\frac{[ d^{1-s} - 1]^2}{[ d^{1-2s} - 1]}}
= C(\alpha) {\frac{d^{2-2s} [ 1 - d^{s-1}]^2}{[ d^{1-2s} - 1]}}
$$
Hence 
$$
{\tfrac{1}{d}} \nu(\alpha,d) 
\approx C(\alpha) \left( {\frac{d^{1-2s}}{d^{1-2s} - 1}} \right) [ 1 - d^{s-1}]^2
= C(\alpha) (- d^{1-2s}) [ 1 - d^{s-1}]^2 
= (-C(\alpha)) [ d^{1-2s} - 2 d^{-s} + \ldots] 
$$
\begin{equation}
\implies {\tfrac{1}{d}} \nu(\alpha,d) \approx (-C(\alpha)) \left[ d^{- \left( {\tfrac{2 - \alpha}{\alpha}} \right)} - 2 d^{- \left( {\tfrac{1}{\alpha}} \right)} + \ldots \right] 
= (-C(\alpha)) d^{- \left( {\tfrac{2 - \alpha}{\alpha}} \right)} + O( d^{- \left( {\tfrac{1}{\alpha}} \right)} )
\end{equation}
It follows that ${\tfrac{1}{d}} \nu(\alpha,d) \rto 0^{+}$ as $\alpha \rto \infty$.
Furthermore, for large $d$,
\begin{equation}
\label{ln_nu_alpha_d_over_d_vs_ln_d_3_geq_alpha_geq_2}
\ln({\tfrac{1}{d}} \nu(\alpha,d) ) \approx \ln( -C(\alpha)) - \left( {\tfrac{2 - \alpha}{\alpha}} \right) \ln(d)
\end{equation}
See Figure \ref{fig:ln_nu_over_d_alpha_eq_2_5.pdf} for numerical confirmation of this. 
 }

\item{
$\alpha = 1$: 
In this case $s = 1$, and (\ref{sum_lambda_k_squared_integral}) becomes
$$
\sum_{k=1}^d \lambda_k  \approx d \ln(d), \quad
\sum_{k=1}^d \lambda_k^2 \approx d^2 \left[ 1 - {\frac{1}{d}} \right]
$$
Hence
$$\nu(1,d) \approx {\frac{(d \ln(d))^2}{d^2 [ 1 - {\tfrac{1}{d}}]}} = ( \ln(d))^2 [ 1 - {\tfrac{1}{d}}]^{-1}$$
It follows that
${\tfrac{1}{d}} \nu(1,d)) \approx {\tfrac{1}{d}} (\ln(d))^2 \rto 0^{+}$ as $d \rto \infty$, and further, for large $d$, then
\begin{equation}
\label{ln_nu_alpha_d_over_d_vs_ln_ln_d_ln_d_alpha_eq_1}
\ln({\tfrac{1}{d}} \nu(1,d)) \approx 2 \ln(\ln(d)) - \ln(d)
\end{equation}
See Figure \ref{fig:ln_nu_over_d_alpha_eq_1_2.pdf} for confirmation of (\ref{ln_nu_alpha_d_over_d_vs_ln_ln_d_ln_d_alpha_eq_1}).
}
\end{itemize}

\begin{figure}
    \centering
    \includegraphics[width=0.45\textwidth]{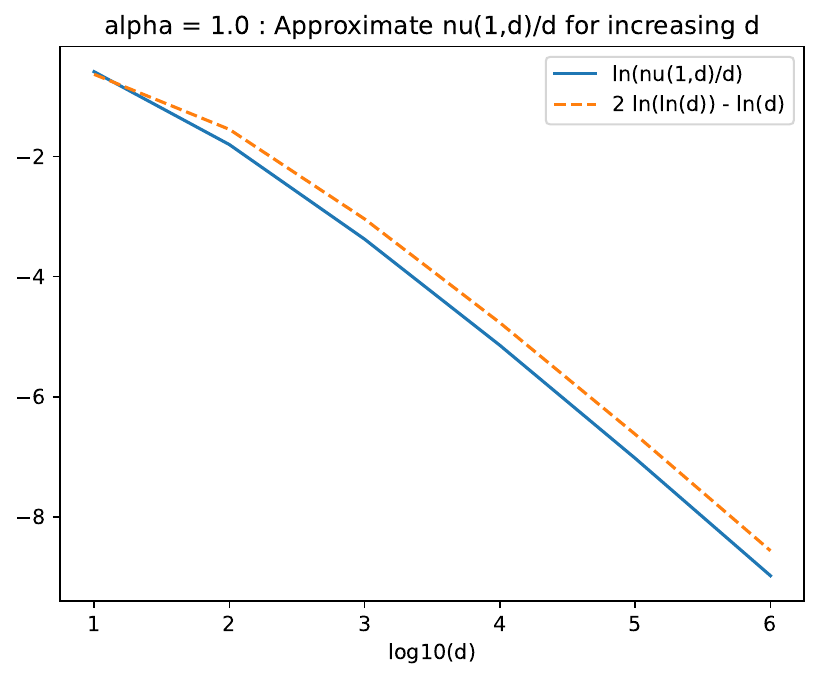}
    \includegraphics[width=0.45\textwidth]{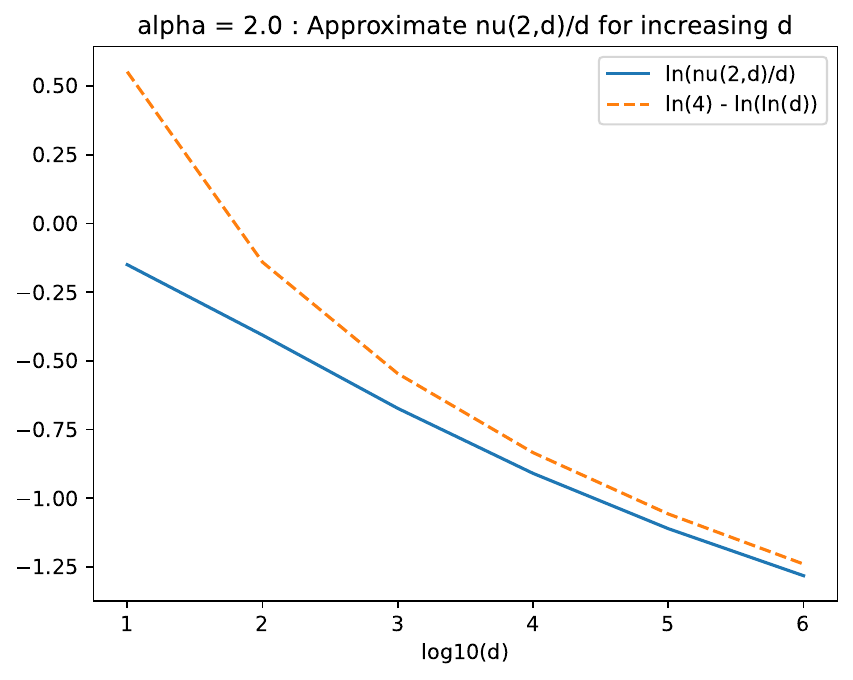} 
    \caption{Numerical tests of (\ref{ln_nu_alpha_d_over_d_vs_ln_ln_d_alpha_eq_2}) and (\ref{ln_nu_alpha_d_over_d_vs_ln_ln_d_ln_d_alpha_eq_1}), for $\alpha = 1.0$ and $\alpha = 2.0$, for increasing $d$} 
	\label{fig:ln_nu_over_d_alpha_eq_1_2.pdf}
\end{figure}

We summarise all these results in Table \ref{table:nu_alpha_d_summary}.\\

\begin{table}[h!]
\centering
\begin{tabular}{|c|c|c|}
\hline
Tail exponent $\alpha$ & $\nu(\alpha,d)$ for large $d$ & ${\tfrac{1}{d}} \nu(\alpha,d)$ for large $d$ \\ \hline
$\alpha > 2$ & $C(\alpha) d$ & $C(\alpha)$ \\ \hline
$\alpha = 2$ & ${\tfrac{4d}{\ln(d)}}$ & ${\tfrac{4}{\ln(d)}}$ \\ \hline
$2 > \alpha > 1$ & $-C(\alpha) d^{2 \left( {\tfrac{\alpha - 1}{\alpha}} \right)}$ & $-C(\alpha) d^{- \left( {\tfrac{2 - \alpha}{\alpha}} \right)}$ \\ \hline
$\alpha = 1$ & $(\ln(d))^2$ & ${\tfrac{1}{d}}(\ln(d))^2$ \\ \hline
\end{tabular}
\caption{Summary of the behaviour of $\nu(\alpha,d)$ for large $d$.}
\label{table:nu_alpha_d_summary}
\end{table}

Finally, we note that in practice for a weight matrix $W$ in the HTSR ``Heavy (Fat) Tailed" phase 
the ESD comprises a mixture of bulk (described by Marchenko-Pastur) and tail (described by Pareto).
Quantifying how the Patnaik-Pearson dimension is influenced by the boundary between the two would give a more refined understanding.

\section{Application to Neural Networks}

We now study the behaviour of the Patnaik-Pearson dimension of a data manifold under the transformations commonly seen in neural networks. 
We start by establishing the correspondence between the behaviour of the Patnaik-Pearson dimension for Pareto distributions described in Section \ref{subsection_pp_dim_meets_pareto}, and the HTSR and SETOL theory outlined in Section \ref{subsection_htsr_setol}.
In particular, once we take account of the different notational convention (\ref{alpha_HTSR_alpha_Pareto}), the critical values for the two theories correspond exactly with one another. 

We then investigate the behaviour of the Patnaik-Pearson dimension under various transformations of the data manifold $X$ - matrix multiplication by a weight matrix $W$; application of softmax; application of activation functions such as ReLU; addition (as seen in skip or residual connections) interpolation and concatenation of two data manifolds; normalisation; and application of the attention kernel.

A natural further development of this work would be; rather than studying individual layer operations separately, 
derive a differential equation governing the evolution of $\PatnaikPearson(X)$ as $X$ passes through many layers, 
in the spirit of the mean-field interacting particle system approach of Rigollet et al. \cite{math_pers_trans, mean_field_dyn}.

\subsection{The correspondence between HTSR and SETOL and the Patnaik-Pearson dimension}
\label{correspondence_htsr_setol_pp_dim}

Suppose we have an $d \times d$ weight matrix $W$ with SVD $W = U S V^T$, using (\ref{svd}), 
where $U$ and $V$ are $d \times d$ orthogonal matrices, and $S$ is a diagonal matrix with diagonal entries $0 \leq \lambda_1 \leq \lambda_2 \leq \ldots \leq \lambda_d$. 
Further suppose the $\lambda_i$ have been drawn from a Pareto distribution with tail exponent $\alpha(W)$, i.e. 
\begin{equation}
\label{pareto_alphaW}
\PP( \lambda \geq t ) = t^{-\alpha(W)}
\end{equation}
Suppose that ${\overline{\bf{w}}} := {\tfrac{1}{d}} \sum_{i=1}^d {\bf w}_i = {\bf 0}$, 
where the ${\bf w}_i$ are the rows of $W$. 
In this case $\PatnaikPearson(W)$ (considered as a data manifold of $d$ points in $\RR^d$) is given by $\nu(\blambda)$.
Then
$$Y = W^T W = (U S V^T )^T (U S V^T) = V S^T S V^T = V \Lambda V^T$$
where $\Lambda$ is a diagonal matrix with entries $\lambda_i^2$.
Now, using (\ref{pareto_alphaW}), 
$$\PP( \lambda^2 \geq t ) 
= \PP( \lambda \geq \sqrt{t} ) 
= t^{-\alpha(W)/2}$$
In the HTSR notation, using (\ref{htsr_alpha}), $\PP( \lambda^2 \geq t ) \sim t^{1 - \alpha_{\HTSR}(W)}$.
So
$\alpha_{\HTSR} (W)$, the tail exponent of the ESD of $W^T W$, satisfies
\begin{equation}
\label{alpha_htsr_pareto}
1 -  \alpha_{\HTSR} (W) = - \alpha(W)/2 \quad \implies \quad \alpha(W) = 2(\alpha_{\HTSR} (W) - 1)
\end{equation}
In particular,
$$2 \leq \alpha_{\HTSR} (W) \leq 6 \quad \implies \quad 2 \leq \alpha (W) \leq 10$$
and the critical value $\alpha_{\HTSR} (W) = 2$ 
discovered in the HTSR work corresponds to $\alpha (W) = 2$, which is the critical value for (\ref{defn_nu_infty}).
By consulting Figure \ref{fig:nu_over_d_as_function_of_alpha.pdf}, we see that for $d \approx 1000$, $\alpha = 2$ corresponds to ${\tfrac{1}{d}} \PatnaikPearson(X) \approx 0.6$, which is useful to keep in mind for the numerical results in the sequel.

\subsection{Transformer Architecture}

A generic transformer can be visualised as a stack of encoder and/or decoder layers. Each layer can be broken down into simpler components which we analyse individually. 
Following \cite{complete_mot}, a single encoder layer with normalisation can be realised as:
\begin{equation}
\label{encoder_layer} 
Z(1) = X + \MultiHead(\LayerNorm(X)), \quad
Z(2) = Z(1) + \FFN(\LayerNorm(Z(1)))
\end{equation}
Here $X$, $Z(1)$ and $Z(2)$ are all $N \times d$.
A decoder layer can be realised similarly. For the purposes of this analysis, we ignore positional encodings.
We define the operations appearing in (\ref{encoder_layer}) in more detail as follows.\\

{\bf Multi-head attention:} For $h$ attention heads, define
\begin{equation}
\label{multihead_attn_defn}
\MultiHead(X) = \Concat({\head}_1, \ldots , {\head}_h) W^O
\end{equation}
with each individual attention head defined by
\begin{equation}
\label{attention_head}
{\head}_i = \Attention(Q_i , K_i , V_i) = 
\softmax
\left(
{\frac{X W^Q_i {(X W^K_i)}^T}{\sqrt{d}}}
\right)
X W^V_i
\end{equation}
for weight matrices $W^Q_i$, $W^K_i$, $W^V_i$, $W^O$.\\

{\bf Feedforward network:}
The feedforward network is of the form
\begin{equation}
\label{ffn_defn}
\FFN(X) =(\sigma(X{W_1} + {\bb}_1)) W_2  + {\bb}_2
\end{equation}
for $X$ of shape $N \times d$, $W_i$ of shape $d \times d$, and $\bb_i \in \RR^d$. 
Here $\sigma$ is an activation function such as ReLU.\\

{\bf Layer Normalisation:} Demeans and normalises each row. For a row vector $\bfxi \in \RR^d$, with mean $\mu$ and standard deviation $\sigma$, then layer normalisation transforms $\bfxi \mapsto (\bfxi - \mu)/\sigma$.\\

Therefore we need to analyse the effect on the Patnaik-Pearson dimension of a data manifold $X$ of each of the following operations:
\begin{itemize}
\item{Matrix products, $X \mapsto XW$, for weight matrices $W$.}
\item{Addition of matrices, $X_1 + X_2$.}
\item{Concatenation of matrices, $X_1 \oplus X_2$.}
\item{Softmax, 
$X \mapsto 
\softmax(
{\tfrac{1}{\sqrt{d}}} X
)$.
}
\item{Scaled dot-product attention, $\Attention(Q,K,V)$.}
\item{Activation functions.}
\item{Layer normalisation.}
\end{itemize}
We will see that some of these operations generally reduce Patnaik-Pearson dimension, whilst others increase it.

\subsection{Patnaik-Pearson for Products}

We investigate the relation between $\PatnaikPearson(X)$, $\PatnaikPearson(W)$ and $\PatnaikPearson(XW)$, where $X$ is an $N \times d$ data manifold and $W$ a $d \times d$ weight matrix. 
We need the notion of two matrices being ``in general position". 

\begin{definition} Linear subspaces $V_1$ and $V_2$ of $\RR^d$, of dimensions $n_1$, $n_2$ respectively, are in general position if their intersection has the smallest possible dimension,
$$\dim( V_1 \cap V_2) = \max(0, n_1 + n_2 - d)$$
If $n_1 + n_2 \leq d$, then $V_1$ and $V_2$ do not intersect, except at the origin.
\end{definition}

\begin{definition} Matrices $A$ and $B$ are in general position if their eigenspaces are in general position - no eigenspace of $A$ overlaps with an eigenspace of $B$ more than dimension-counting requires. 
\end{definition}

Unless stated otherwise, we assume that $X$ and $W$ are in general position.
Numerical experiments (e.g. Figure \ref{fig:product_hypothesis_new}) suggest the following, for $X$ and $W$ generated using various heavy and light-tailed distributions (uniform, Cauchy, Pareto).

\begin{conjecture}
\label{pp_dim_lower_upper_bound_conjecture}
Suppose that $X$ is $N \times d$, and $W$ is $d \times m$, with $X$ and $W$ in general position.
For sufficiently large $N$, $d$, $m$, then 
\begin{equation}
\label{pp_dim_lower_upper_bound_main_theorem}
{\tfrac{1}{d}} \PatnaikPearson(X) * {\tfrac{1}{m}} \PatnaikPearson(W) \leq
{\tfrac{1}{m}} \PatnaikPearson(XW)
\leq \min \{ {\tfrac{1}{d}} \PatnaikPearson(X) , {\tfrac{1}{m}} \PatnaikPearson(W) \}
\end{equation}
almost surely.
\end{conjecture}

What remains unclear in the statement of this conjecture is, for ``randomly chosen" $X$ and $W$, what constraints do we impose on their distribution? 
We will prove (\ref{pp_dim_lower_upper_bound_main_theorem}) in expectation for the case of $X$, $W$ being elements of a so-called ``Pareto ensemble" (Theorem \ref{theorem_pp_product_pareto_ensemble}). 
We need some preliminary results. First of all, consider the SVDs
$$X = U_X A V_X^t, \quad W = U_W B V_W^t$$
where $A$, $B$ are diagonal, of sizes $N \times d$ and $d \times m$ respectively.
Then
$$
XW = U_X A (V_X^t U_W) B V_W^t = U_X A Q B Q^t (V_X^t U_W V_W^t)
$$
where $Q = V_X^t U_W$ is an orthogonal $d \times d$ matrix. 
For $B$ and $Q^t$ to be dimensionally compatible, we require $m = d$.
Using (\ref{pp_invariant_under_orthogonals}), it follows that 
$$\PatnaikPearson(XW) = \PatnaikPearson(A Q B Q^t)$$
Simplifying further, since there are only $d$ non-zero entries of $A$, we will assume that $A$ is $d \times d$.

\begin{theorem}
\label{product_conjecture_nu_theorem}
Suppose that $\lambda$ and $\phi$ are regularly-varying random variables, with tail exponents $\alpha$ and $\beta$ respectively, and let $d$ be a positive integer.  
Let $\lambda_1, \ldots , \lambda_d$ be iid, with distribution $\lambda$, 
and similarly $\phi_1, \ldots \phi_d$ be iid, with distribution $\phi$. 
Denote $\blambda = (\lambda_1, \ldots , \lambda_d)$, $\bphi = (\phi_1, \ldots \phi_d)$.
Let $Q$ be a randomly chosen $d \times d$ orthogonal matrix.  
Define 
$Y = 
\diag(\blambda) 
Q 
\diag(\bphi)
Q^t$, 
and consider the SVD 
$Y = U 
\diag(\bpsi) 
V^t$, where 
$\bpsi = (\psi_1, \ldots , \psi_d)$. Then, for $d$ sufficiently large,
\begin{equation}
\label{product_conjecture_nu}
{\tfrac{1}{d}} \EE(\nu(\blambda)) * {\tfrac{1}{d}} \EE(\nu(\bphi)) 
\leq 
{\tfrac{1}{d}} \EE(\nu(\bpsi)) 
\leq 
{\tfrac{1}{d}} \EE( \min \{  \nu(\blambda), \nu(\bphi) \})
\end{equation}
\end{theorem}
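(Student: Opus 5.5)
The plan is to pass to the large-$d$ limit with Voiculescu's free probability and to treat $\nu/d$ as a ratio of moments, as in Theorem \ref{thm_h_Y_d}. Write $a = \diag(\blambda)$ and $b = Q \diag(\bphi) Q^t$. Since $Q$ is Haar distributed, $a$ and $b$ are asymptotically free with respect to $\tau = \tfrac{1}{d}\Tr$. Their spectral laws are those of $\lambda$ and $\phi$, possibly truncated at the level of the largest of the $d$ samples.

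The $\psi_i^2$ are the eigenvalues of $Y^tY$, equivalently of $a b^2 a$. By (\ref{pp_int_dim}),
$$
{\tfrac{1}{d}}\nu(\bpsi) = \frac{\tau(|ab|)^2}{\tau(a b^2 a)} .
$$
Freeness gives the denominator exactly: $\tau(ab^2a) = \tau(a^2)\tau(b^2)$. In the light-tailed regime $\alpha,\beta>2$ this equals $\EE(\lambda^2)\EE(\phi^2)$. So the whole problem reduces to two-sided bounds on the first absolute moment $\tau(|ab|)$.

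First I would treat the regime $\alpha,\beta>2$. There Theorem \ref{thm_h_Y_d} gives ${\tfrac{1}{d}}\EE\nu(\blambda)\to \EE(\lambda)^2/\EE(\lambda^2)$, and similarly for $\bphi$. The lower bound in (\ref{product_conjecture_nu}) then reduces to
$$
\tau(|ab|) \geq \tau(a)\tau(b),
$$
and the upper bound reduces to
$$
\tau(|ab|)^2 \leq \min\{\tau(a)^2\tau(b^2),\ \tau(a^2)\tau(b)^2\}.
$$
\begin{itemize}
\item For the upper bound I would write $|ab| = u^*ab$ via the polar decomposition. Then I would apply Cauchy--Schwarz with respect to $\tau$, splitting off $a^{1/2}$ or $b^{1/2}$. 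Finally I would use freeness to factor mixed moments such as $\tau(a b^2)=\tau(a)\tau(b^2)$.
\item For the lower bound I would use $\tau(|ab|)=\tau((a b^2 a)^{1/2})$ and the operator inequality $|ab| \geq$ the ``symmetrised'' positive part. Concretely, I would compare with $\tau(a^{1/2} b a^{1/2}) = \tau(a)\tau(b)$ through the trace inequality $\tau(|ab|)\geq |\tau(ab)|$, which holds since $|\tau(x)|\leq\tau(|x|)$.
\end{itemize}

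Second, I would treat the heavy-tailed regimes. Suppose $\min\{\alpha,\beta\}\leq 2$. By the Pareto analysis in Section \ref{subsection_pp_dim_meets_pareto} (Table \ref{table:nu_alpha_d_summary}), at least one of ${\tfrac{1}{d}}\EE\nu(\blambda)$ and ${\tfrac{1}{d}}\EE\nu(\bphi)$ tends to $0$, so the left side of (\ref{product_conjecture_nu}) is asymptotically $0$. The lower bound is then trivial because $\nu\geq 1$.

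For the upper bound I would show that the $\psi_i$ are regularly varying with exponent $\min\{\alpha,\beta\}$. The heuristic is that the largest singular values of $ab$ are dominated by the largest entry of whichever factor is heavier, which is the single big jump (\ref{single_big_jump}) combined with Theorem \ref{product_rv} and Breiman's lemma. Then ${\tfrac{1}{d}}\nu(\bpsi)$ decays at least as fast as the slower-decaying of the two factors. To do this I would control $\sum_i\psi_i \leq \|a\|_{op}\sum_i \phi_i$, and symmetrically with $a$ and $b$ exchanged. For the denominator I would use the bound $\sum_i\psi_i^2 = \Tr(a^2 b^2) \geq$ (largest $\lambda^2$) $\times$ a typical diagonal entry of $b^2$, with the typical entry estimated through Haar concentration. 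Together these give the comparison with $\min\{\nu(\blambda),\nu(\bphi)\}$ inside the expectation.

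I expect the main obstacle to be the lower bound on the first absolute moment. The inequality $\tau(|ab|)\geq\tau(a)\tau(b)$ via $|\tau(ab)|\leq\tau(|ab|)$ is clean. However, passing from it to expectations of ratios requires exchanging $\EE$ with a ratio, and justifying freeness when the laws have unbounded support and $d$-dependent truncation. I would first handle $\EE$ by concentration: Theorem \ref{thm_h_Y_d} gives fluctuations of order $d^{-1/2}$ when $\alpha,\beta>4$. For $2<\alpha\leq4$ I would use a truncation argument at level $d^{1/\alpha}$, and control the error with the rate $d^{-(\alpha-2)/\alpha}$ from (\ref{nu_over_d_approx1}).
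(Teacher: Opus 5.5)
Your free-probability argument for the regime $\alpha,\beta>2$ is correct, and it takes a different and cleaner route than the paper. The paper computes $\tau(Y)^2/\tau(Y^2)$. That requires the alternating moment $\tau(abab)$ from Lemma \ref{dimitri_lemma} and gives $st/(s+t-st)$. The paper then needs Lemma \ref{tr_Y_all_squared_over_tr_Ysquared} to identify this eigenvalue ratio with $\tfrac1d\nu(\bpsi)$. But $Y=ab$ is not normal, so eigenvalue and singular-value quantities differ; in the free limit, for instance, $\tau(Y^tY)-\tau(Y^2)=(\tau(a^2)-\tau(a)^2)(\tau(b^2)-\tau(b)^2)$. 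Your identity $\tfrac1d\nu(\bpsi)=\tau(|ab|)^2/\tau(a^2b^2)$ is the singular-value quantity itself. Your two estimates, $|\tau(ab)|\le\tau(|ab|)$ and Cauchy--Schwarz with $|ab|=u^*ab$ and $u$ unitary, hold deterministically for every $d$ and every $Q$:
\[
\frac{\tau(ab)^2}{\tau(a^2b^2)}\le\tfrac1d\nu(\bpsi)\le\min\left\{\frac{\tau(a)\,\tau(ab^2)}{\tau(a^2b^2)},\ \frac{\tau(b)\,\tau(a^2b)}{\tau(a^2b^2)}\right\}.
\]
They only use factorisations $\tau(xy)=\tau(x)\tau(y)$, and these hold exactly after averaging over Haar $Q$. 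So in this regime you need concentration in $Q$, not asymptotic freeness.

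The genuine gap is the regime $\min\{\alpha,\beta\}\le2$. There all three quantities in (\ref{product_conjecture_nu}) tend to $0$, so the theorem is a comparison of decay rates, and ``tends to zero'' reasoning cannot settle it.

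\begin{itemize}
\item \emph{Lower bound.} It is not trivial. $\nu(\bpsi)\ge1$ only gives $\tfrac1d\EE\nu(\bpsi)\ge\tfrac1d$. By Table \ref{table:nu_alpha_d_summary}, the left-hand side is of order $d^{-(2-\alpha)/\alpha}\gg d^{-1}$ when $1<\alpha<2<\beta$, and of order $1/\ln d$ when $\alpha=2<\beta$.
\item \emph{Upper bound.} Your sketch loses a factor of $d$. Combining $\sum_i\psi_i\le\lambda_{\max}\sum_i\phi_i$ with $\sum_i\psi_i^2\gtrsim\lambda_{\max}^2\cdot\tfrac1d\sum_k\phi_k^2$ gives only $\nu(\bpsi)\lesssim d\,\nu(\bphi)$, i.e.\ $\tfrac1d\nu(\bpsi)\lesssim\nu(\bphi)$, which is vacuous. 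Regular variation of the $\psi_i$ with exponent $\min\{\alpha,\beta\}$ gives at best the limit $\nuinfty(\min\{\alpha,\beta\})=0$, which has no content here.
\end{itemize}

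What is needed is to keep the deterministic sandwich above for all tail exponents and control the mixed moments $\tau(ab)$, $\tau(ab^2)$, $\tau(a^2b)$, $\tau(a^2b^2)$ when second moments are infinite. The paper does not split into regimes. It truncates $\lambda,\phi$ at a level $K$ and applies the bounded-variable estimate on the event that all $\lambda_i,\phi_i\le K$. It bounds the complementary event, of probability $1-\pdkab$, using $1\le\nu\le d$, and lets $K\to\infty$ via Lemma \ref{limit_nu_lambdaK_eq_nu_lambda}. If you adopt this device, your concentration-in-$Q$ errors depend on $K$. You must therefore make them uniform in $K$, or use a $d$-dependent truncation level as you propose for $2<\alpha\le4$. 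The reason is that the single-big-jump term $Q_{jk}^2\phi_{\max}^2$ in $(b^2)_{jj}$ fluctuates at relative order one and does not concentrate.
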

{\bf Proof:}  
A heuristic argument for the upper bound is as follows. We have
$$Y_{ij} = \lambda_i (\sum_k \phi_k Q_{ik} Q_{jk})$$
From Theorem \ref{fellers_convolution_lemma}, the weighted sum  $\sum_k \phi_k Q_{ik} Q_{jk}$ will also be regularly-varying with tail exponent $\beta$, 
and from Theorem \ref{product_rv}, since $\lambda$, $\phi$ are both regularly-varying, 
their product $\lambda \phi$ is also regularly-varying, with tail exponent $\gamma = \min \{ \alpha, \beta \}$. 
Hence the $Y_{ij}$ are regularly-varying, with tail exponent $\gamma$. 
As argued by Cizeau and Bouchaud \cite{levy_matrices} 
(see also Burda et al. \cite{free_random_levy_wigner_levy}, 
Ben Arous and Guionnet \cite{spectrum_heavy_tailed}, 
and Aggarwal et al. \cite{goe_statistics_levy_matrices}), 
if the entries of a matrix follow a power law distribution
then the eigenvalue distribution inherits the power-law tail of the entries, with the same exponent.
Hence the $\psi_i$ are drawn from a regularly-varying distribution with tail exponent $\gamma$. 
Since 
$\lim_{d \rto \infty} {\tfrac{1}{d}} \EE (\nu(\blambda)) = \nuinfty(\alpha)$, as defined in (\ref{defn_nu_infty})
and similarly for $\bphi$, $\bpsi$, and since $\nuinfty(\alpha)$ is monotonic non-decreasing in $\alpha$, it follows that
$$
\lim_{d \rto \infty} {\tfrac{1}{d}} \EE (\nu(\bpsi)) 
= \nuinfty(\gamma) 
= \nuinfty( \min \{ \alpha, \beta \} ) 
= \min \{ \nuinfty(\alpha), \nuinfty(\beta) \} 
= \lim_{d \rto \infty} \min \{ {\tfrac{1}{d}}\EE (\nu(\alpha)), {\tfrac{1}{d}} \EE (\nu(\bphi)) \} 
$$
We note that this is a slightly weaker upper bound than (\ref{product_conjecture_nu}).\\

For the full proof, we use Voiculescu's free probability \cite{intro_random_matrices, stable_laws_bercovici, voiculescu_limit_laws, free_random_variables}, specifically the notion of two elements of a non-commutative probability space being freely independent.

\begin{definition}
A non-commutative probability space $(\bbA, \tau)$ is a unital algebra $\bbA$ over $\CC$, (possibly non-commutative), with $\tau : \bbA \rightarrow \CC$ a linear functional satisfying $\tau(1) = 1$. If $\tau(xy) = \tau(yx)$ for all $x, y \in \bbA$, then $\tau$ is called a trace. 
If $\bbA$ is a Banach algebra, then $\tau$ is required to be continuous, and if $\bbA$ is also a C*-algebra, then $\tau$ is positive, in the sense that $\tau({x^{*}}x) \geq 0$ for all $x \in \bbA$. 
\end{definition}

\begin{definition} Let $\{ \bbA_j \}_{j \in I}$ be a family of subalgebras of $\bbA$, each containing the unit of $\bbA$. 
The family $\{ \bbA_j \}_{j \in I}$ is said to be {\it freely independent} if, for any positive integer $n$, and indices $k_1 \ne k_2$, $k_2 \ne k_3$, .., $k_{n-1} \ne k_n$ in $I$, and any $a_j \in \bbA_{k_j}$ ($j = 1, \ldots n$), with $\tau(a_j) = 0$, then
$\tau( a_1 a_2 \ldots a_n ) = 0$.
Sets of elements of $\bbA$ are said to be {\it freely independent} if the algebras that they generate are freely independent.
\end{definition}

Voiculescu proved the following:

\begin{theorem}
\label{voiculescus_theorem}
 Voiculescu \cite{voiculescu_limit_laws}.
If $A_n$, $B_n$ are sequences of (deterministic or random) matrices with limiting spectral distributions, and $U_n$ is a sequence of Haar-distributed unitaries, then $A_n$ and $U_n B_n U_n^{*}$ become asymptotically free as $n \rightarrow \infty$.
\end{theorem}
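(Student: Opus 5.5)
We would prove the theorem by the method of moments, in the form in which it is used later: for Hermitian $A_n$, $B_n$ (here $n \times n$ versions of $\diag(\blambda)$ and $\diag(\bphi)$), with $\tau_n = \tfrac{1}{n}\Tr$ and, for random matrices, expectation taken as well. We assume that $\sup_n \|A_n\|_{op}$ and $\sup_n \|B_n\|_{op}$ are bounded, or at least that all mixed normalised trace moments stay bounded, so that limiting spectral distributions exist with all moments finite. The heavy-tailed case is then recovered by truncation, approximating $\lambda$, $\phi$ by bounded variables and letting the truncation level tend to infinity.

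It suffices to check the defining property of free independence on polynomials. Take polynomials $p_1, \ldots, p_k$ and $q_1, \ldots, q_k$ such that $a_j = p_j(A_n) - \tau_n(p_j(A_n))$ and $b_j = q_j(B_n) - \tau_n(q_j(B_n))$ have zero trace. Set $\tilde b_j = U_n b_j U_n^{*}$. We must show
$$\EE\, \tau_n\big(a_1 \tilde b_1 a_2 \tilde b_2 \cdots a_k \tilde b_k\big) \rto 0 ,$$
together with the analogous words beginning or ending with a $\tilde b$. Since the $a_j$, $b_j$ are again polynomials in $A_n$, $B_n$ with uniformly bounded norms, this reduces everything to a single statement about alternating words in two families of bounded matrices conjugated by a Haar unitary.

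The core computation uses the Weingarten formula. Expanding the trace in matrix entries gives
$$\EE\big[U_{i_1 j_1}\cdots U_{i_k j_k}\,\overline{U_{i'_1 j'_1}}\cdots \overline{U_{i'_k j'_k}}\big] = \sum_{\sigma,\pi \in S_k} \delta_{i,i'\circ\sigma}\,\delta_{j,j'\circ\pi}\,\mathrm{Wg}(\sigma\pi^{-1}, n),$$
with the Weingarten asymptotics $\mathrm{Wg}(\sigma,n) = n^{-k-|\sigma|}\big(\mathrm{Moeb}(\sigma) + O(n^{-2})\big)$, where $|\sigma|$ is the minimal number of transpositions needed to write $\sigma$. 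Each pair $(\sigma,\pi)$ produces a product of normalised traces of products of the $a$'s, indexed by the cycles of one permutation, times products of traces of the $b$'s, indexed by the cycles of another. Counting powers of $n$ and using the triangle inequality $|\sigma| + |\sigma^{-1}\gamma| \geq |\gamma|$ for the long cycle $\gamma$ shows that only geodesic (non-crossing) pairs contribute at order $n^0$; everything else is $O(n^{-2})$. In every leading term at least one cycle is a singleton, and a singleton contributes a factor $\tau_n(a_j) = 0$ or $\tau_n(b_j) = 0$. This is the standard non-crossing argument, and it gives $\EE\,\tau_n(\cdots) = O(n^{-2})$. As an alternative we could write $U_n$ as the unitary part of a Ginibre matrix and use a Wick/genus expansion, but the Weingarten route is cleaner.

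The main obstacle is this combinatorial step: getting the bookkeeping of index contractions into cycle structures right, and proving that every geodesic pair forces a singleton. We would handle it through the Kreweras complement and the lattice of non-crossing partitions. Finally, to upgrade convergence in expectation to almost sure asymptotic freeness, we would bound the variance of $\tau_n(\cdots)$ by $O(n^{-2})$, either through the same Weingarten expansion or through Gromov--Milman concentration on $U(n)$, which is valid for Lipschitz functions of $U_n$ when the norms are bounded. Borel--Cantelli then gives almost sure convergence. The truncation argument mentioned at the start is needed for unbounded limits such as the Pareto ensembles in the preceding results.
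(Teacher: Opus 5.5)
The paper gives no proof of this statement. It is quoted from Voiculescu and used as a black box in Theorem \ref{dimitri_theorem}. Your argument is a correct, self-contained proof of the standard version of the result. That version needs convergent (hence bounded) normalised trace moments, bounded norms for the almost sure upgrade, and $U_n$ independent of $A_n,B_n$; these are hypotheses the paper's informal statement leaves implicit.

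Your route is the Weingarten calculus of Collins and of Collins and \'Sniady. Voiculescu's original argument instead goes through Gaussian matrices: Haar unitaries arise as polar parts of non-selfadjoint Gaussian matrices, which are handled by Wick/genus combinatorics. That is the alternative you mention in passing.

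The step you flag as the main obstacle is shorter than you expect:
\begin{itemize}
\item The power of $n$ attached to $(\sigma,\pi)$ is $c_a+c_b-k-1-|\sigma\pi^{-1}|$, where $c_a,c_b$ are the cycle counts of the permutations carrying the $a$'s and the $b$'s.
\item The three-term version of your triangle inequality applies to permutations of lengths $k-c_a$, $k-c_b$, $|\sigma\pi^{-1}|$ whose product is $\gamma$. It makes this exponent $\le 0$.
\item Parity of signs then makes every non-leading term $O(n^{-2})$.
\item At order $n^0$ the exponent vanishes, so $c_a+c_b\ge k+1$. The two permutations act on only $2k$ points in total, so one of them has a fixed point. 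That gives a factor $\tau_n(a_j)=0$ or $\tau_n(b_j)=0$.
\end{itemize}
So the Kreweras complement is only needed to identify limits, not for this vanishing.

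Compared with the citation, your route gives an explicit rate. More usefully here, it ports directly to the orthogonal case. Replace the unitary Weingarten function on $S_k$ by the orthogonal one on pair partitions of $\{1,\ldots,2k\}$; you get the same conclusion for Haar orthogonal matrices, with $O(n^{-1})$ errors. That is what the paper actually needs, since its $Q$ is orthogonal, and a Haar orthogonal matrix is not Haar distributed on $U(d)$.

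Finally, use your closing truncation remark with care. Truncation recovers asymptotic freeness in the distributional sense only if the error is controlled uniformly in $n$, for example via the rank inequality for empirical spectral distributions. It does not carry moment identities, such as the formula for $\tau(A\overline{B}A\overline{B})$, over to limits with infinite second moment.
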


In the current setting, we are restricting ourselves to real-valued matrices (thought of as $\RR$-subalgebras of complex-valued matrices), hence unitary matrices are simply orthogonal matrices.\\

I am indebted to Dimitri Shlyakhtenko for the key ideas of Theorem \ref{dimitri_theorem} and Lemma \ref{dimitri_lemma}.

\begin{theorem}
\label{dimitri_theorem}
Let $(\bbA, \tau)$ be the non-commutative probability space consisting of $\CC$-valued $d \times d$ matrices, with 
$\tau(x) 
= {\tfrac{1}{d}} \Tr (x)$ 
for $x \in \bbA$, where $\Tr$ is the usual matrix trace. 
Let $\blambda, \bphi \in \RR^d$ be non-zero vectors of non-negative reals,
define $A = \diag(\blambda)$, $B = \diag(\bphi)$ and let
$Q$ be a randomly chosen $d \times d$ orthogonal matrix.  
Define 
$Y = A Q B Q^t$
and consider the SVD 
$Y = U 
\diag(\bpsi) 
V^t$, where 
$\bpsi = (\psi_1, \ldots , \psi_d)$.
Then, asymptotically for large $d$,
\begin{equation}
\label{product_conjecture_nu_dimitri}
{\tfrac{1}{d}} \nu(\blambda) * {\tfrac{1}{d}} \nu(\bphi) 
\leq 
{\tfrac{1}{d}} \nu(\bpsi) 
\leq 
\min \{ {\tfrac{1}{d}} \nu(\blambda), {\tfrac{1}{d}} \nu(\bphi) \}
\end{equation}
\end{theorem}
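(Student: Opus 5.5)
The plan is to express every quantity in (\ref{product_conjecture_nu_dimitri}) through traces in $(\bbA,\tau)$. Freeness (Theorem \ref{voiculescus_theorem}) will only be needed for \emph{polynomial} mixed moments. The one non-polynomial quantity, the trace of an absolute value, will be handled by exact finite-$d$ trace inequalities.

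Set $a = A = \diag(\blambda)$ and $b = QBQ^t$. Both are positive semidefinite, $Y = ab$, and $b$ has the same spectrum as $B$. The singular values of $Y$ are the eigenvalues of $|Y| = (Y^tY)^{1/2}$. Therefore
\begin{equation*}
{\tfrac{1}{d}}\nu(\bpsi) = \frac{\tau(|ab|)^2}{\tau(|ab|^2)}, \qquad {\tfrac{1}{d}}\nu(\blambda) = \frac{\tau(a)^2}{\tau(a^2)}, \qquad {\tfrac{1}{d}}\nu(\bphi) = \frac{\tau(b)^2}{\tau(b^2)}.
\end{equation*}
The denominator is polynomial: $\tau(|ab|^2) = \tau(b a^2 b) = \tau(a^2 b^2)$. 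By Theorem \ref{voiculescus_theorem}, $a$ and $b$ are asymptotically free, and freeness of centred elements gives $\tau(a^2b^2) \approx \tau(a^2)\tau(b^2)$. By the same argument $\tau(ab) \approx \tau(a)\tau(b)$ and $\tau(ab^2) \approx \tau(a)\tau(b^2)$. These three factorisations are the only places where asymptotics enter.

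For the lower bound, I would use the general inequality $|\tau(x)| \le \tau(|x|)$. To prove it, take the polar decomposition $x = V|x|$ with $V$ unitary (possible for square matrices) and apply Cauchy--Schwarz for the positive trace:
\begin{equation*}
|\tau(V|x|^{1/2}\,|x|^{1/2})| \le \tau(|x|^{1/2}V^*V|x|^{1/2})^{1/2}\,\tau(|x|)^{1/2} = \tau(|x|).
\end{equation*}
Applying this with $x = ab$ gives $\tau(|ab|) \ge \tau(ab) \approx \tau(a)\tau(b)$. Dividing its square by $\tau(a^2)\tau(b^2)$ yields
\begin{equation*}
{\tfrac{1}{d}}\nu(\bpsi) \gtrsim {\tfrac{1}{d}}\nu(\blambda)\,{\tfrac{1}{d}}\nu(\bphi).
\end{equation*}

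For the upper bound, write $|ab| = V^* ab$ with $V$ unitary and split $a = a^{1/2}a^{1/2}$. Cauchy--Schwarz then gives
\begin{equation*}
\tau(|ab|) = \tau\big((a^{1/2}V)^*(a^{1/2}b)\big) \le \tau(V^*aV)^{1/2}\,\tau(bab)^{1/2}.
\end{equation*}
Since $V$ is unitary, $\tau(V^*aV) = \tau(a)$, and $\tau(bab) = \tau(ab^2) \approx \tau(a)\tau(b^2)$ by freeness. Hence $\tau(|ab|)^2 \lesssim \tau(a)^2\tau(b^2)$. Dividing by $\tau(a^2)\tau(b^2)$ gives ${\tfrac{1}{d}}\nu(\bpsi) \lesssim {\tfrac{1}{d}}\nu(\blambda)$.

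The bound by ${\tfrac{1}{d}}\nu(\bphi)$ follows symmetrically. The matrices $ab$ and $(ab)^* = ba$ have the same singular values, so $\tau(|ab|) = \tau(|ba|)$. Repeating the argument with the roles of $a$ and $b$ exchanged gives $\tau(|ab|)^2 \lesssim \tau(b)^2\tau(a^2)$. Taking the minimum of the two bounds completes (\ref{product_conjecture_nu_dimitri}).

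The main obstacle is rigour in the asymptotic step, namely justifying the replacement of $\tau(ab)$, $\tau(a^2b^2)$ and $\tau(ab^2)$ by their factorised free values up to $o(1)$ relative errors. This requires $\blambda$ and $\bphi$ to have limiting spectral distributions with finite second moments, which is the setting of Theorem \ref{voiculescus_theorem}. In the heavy-tailed regime these normalised moments may diverge. There I would first truncate or rescale, for instance by dividing by $\max_i\lambda_i$ using property 3 of the lemma on $\nu$ (scale invariance). Alternatively, I would state the result as holding whenever the ratios $\tau(a)^2/\tau(a^2)$ and $\tau(b)^2/\tau(b^2)$ have limits. Apart from this step, the argument uses only exact trace inequalities, which hold at every $d$.
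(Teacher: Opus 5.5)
Your argument is correct, and it proves the inequality by a genuinely different, and sounder, route than the paper.

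The paper works with the \emph{eigenvalue} moments $\tau(Y)=\tau(A\overline{B})$ and $\tau(Y^2)=\tau(A\overline{B}A\overline{B})$. It uses Lemma~\ref{dimitri_lemma} to get the exact free value $\tau(Y)^2/\tau(Y^2)=st/(s+t-st)$, with $s=\tfrac{1}{d}\nu(\blambda)$ and $t=\tfrac{1}{d}\nu(\bphi)$. It bounds this function between $st$ and $\min\{s,t\}$ on $[0,1]^2$. It then appeals to Lemma~\ref{tr_Y_all_squared_over_tr_Ysquared}, stated without proof, to identify $\tau(Y)^2/\tau(Y^2)$ with $\tfrac{1}{d}\nu(\bpsi)$.

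You work with the singular values directly, via $\tfrac{1}{d}\nu(\bpsi)=\tau(|ab|)^2/\tau(|ab|^2)$. The denominator $\tau(a^2b^2)$ is polynomial, and you control the non-polynomial numerator by exact Cauchy--Schwarz trace inequalities. Freeness then enters only through factorising $\tau(ab)$, $\tau(ab^2)$, $\tau(a^2b^2)$, and also $\tau(a^2b)=\tau(aba)$, which your symmetric step needs. So there are four factorisations, not three.

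The paper's route gives an exact closed form, but for the wrong quantity. $Y$ is not normal: under freeness $\tau(Y^tY)-\tau(Y^2)=(\tau(a^2)-\tau(a)^2)(\tau(b^2)-\tau(b)^2)$, and $\tau(Y)<\tau(|Y|)$ in general. For example, take $a,b$ free projections of trace $\tfrac{1}{2}$, so $s=t=\tfrac{1}{2}$. The paper's formula gives $\tfrac{1}{3}$. The squared singular values, however, have limiting law $\tfrac{1}{2}\delta_0$ plus half the arcsine law on $[0,1]$, which gives $\tfrac{1}{d}\nu(\bpsi)\to 4/\pi^2\approx 0.405$. So the bridging lemma cannot hold for the arbitrary $\blambda,\bphi$ the theorem allows. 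Your argument is the one that actually yields the stated bounds, consistent with $4/\pi^2\in[\tfrac{1}{4},\tfrac{1}{2}]$.

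Both arguments take asymptotic freeness and convergence of the relevant moments for granted. Your caveat about heavy tails matches the paper, which handles that issue separately by truncation in the proof of Theorem~\ref{product_conjecture_nu_theorem}.
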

{\bf Proof:}
We have $\tau(A) = {\tfrac{1}{d}} \Tr (A) = {\tfrac{1}{d}} (\sum_i \lambda_i )$.
Hence 
$$ {\tfrac{1}{d}} \nu(\blambda) = 
{\frac
{{\tfrac{1}{d^2}} (\sum \lambda_i)^2}
{{\tfrac{1}{d}} (\sum \lambda_i^2)}
}
=
{\frac
{(\tau (A))^2}
{\tau (A^2)}
}
, \quad
{\tfrac{1}{d}} \nu(\bphi) =
{\frac
{(\tau (B))^2}
{\tau (B^2)}
}
$$
Also,
$$
{\frac
{(\tau (Y))^2}
{\tau (Y^2)}
}
=
{\frac
{(\tau (AQB{Q^t}))^2}
{\tau ((AQB{Q^t})^2)}
}
=
{\frac
{(\tau (AQB{Q^t}))^2}
{\tau ((A^{\half} QB Q^t A^{\half} )^2)}
}
$$
Writing 
$\overline{B} = QB 
{Q^t}$, 
this becomes
\begin{equation}
\label{nu_psi_intermediate}
{\frac
{(\tau (Y))^2}
{\tau (Y^2)}
}
=
{\frac
{(\tau(A \overline{B}))^2}
{\tau((A^{\half} \overline{B} A^{\half})^2)}
}
= 
{\frac
{(\tau(A \overline{B}))^2}
{\tau(A \overline{B} A \overline{B})}
}
\end{equation}
We need the following:
\begin{lemma}
\label{dimitri_lemma} Let $(\bbA,\tau)$ be a non-commutative probability space, and suppose that $x, y \in \bbA$ are freely independent. 
Then 
$\tau(xy) = \tau(x) \tau(y)$, and 
\begin{equation}
\label{tau_xyxy}
\tau(xyxy) = \tau(x^2) \tau(y)^2 + \tau(x)^2 \tau(y^2) - \tau(x)^2 \tau(y)^2
\end{equation}
\end{lemma}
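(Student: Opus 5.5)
We prove both identities of Lemma \ref{dimitri_lemma} directly from the definition of free independence, by the standard centering trick. Write $x^\circ = x - \tau(x)1$ and $y^\circ = y - \tau(y)1$. These lie in the unital algebras generated by $x$ and $y$ respectively, and they satisfy $\tau(x^\circ) = \tau(y^\circ) = 0$.

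\emph{First identity.} The definition of freeness with $n = 2$ gives $\tau(x^\circ y^\circ) = 0$. Expand
\[
x^\circ y^\circ = xy - \tau(y)x - \tau(x)y + \tau(x)\tau(y)1 .
\]
Applying linearity of $\tau$ and $\tau(1) = 1$ then gives $\tau(xy) = \tau(x)\tau(y)$.

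\emph{Second identity.} Substitute $x = x^\circ + a$ and $y = y^\circ + b$ into $\tau(xyxy)$, where $a = \tau(x)$ and $b = \tau(y)$. Expanding multilinearly gives $16$ words in $x^\circ, y^\circ$ with scalar coefficients. We sort these by the number of centered letters:
\begin{itemize}
\item The word with no centered letters contributes $a^2 b^2$.
\item Words with exactly one centered letter vanish, since $\tau(x^\circ) = \tau(y^\circ) = 0$.
\item For words with two centered letters, the adjacent or cyclically separated pairs $x^\circ y^\circ$ and $y^\circ x^\circ$ give $\tau = 0$ by the first part, applied to the centered elements.
\item The pair $x^\circ \cdot x^\circ$ (positions $1,3$) gives $b^2\tau((x^\circ)^2)$, and the pair $y^\circ \cdot y^\circ$ (positions $2,4$) gives $a^2\tau((y^\circ)^2)$.
\end{itemize}

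Words with three centered letters reduce, after multiplying adjacent like letters, to forms such as $x^\circ y^\circ x^\circ$ (coefficient $b$). Here we must be careful. A product like $(x^\circ)^2$ need not be centered, so we recenter it: write $(x^\circ)^2 = ((x^\circ)^2)^\circ + \tau((x^\circ)^2)1$. Each resulting term is then either an alternating product of centered elements, which has $\tau = 0$ by freeness, or a shorter word already handled. In fact, in $xyxy$ the letters always alternate, so every three-letter word is of the form $x^\circ y^\circ x^\circ$ or $y^\circ x^\circ y^\circ$. These are directly alternating centered products, and their $\tau$ vanishes. The single four-letter word $x^\circ y^\circ x^\circ y^\circ$ is also alternating and centered, so $\tau = 0$.

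Summing the surviving terms,
\[
\tau(xyxy) = a^2 b^2 + b^2\tau((x^\circ)^2) + a^2\tau((y^\circ)^2).
\]
Now substitute $\tau((x^\circ)^2) = \tau(x^2) - a^2$ and $\tau((y^\circ)^2) = \tau(y^2) - b^2$. The $a^2b^2$ terms combine to give
\[
\tau(x^2)\tau(y)^2 + \tau(x)^2\tau(y^2) - \tau(x)^2\tau(y)^2,
\]
which is (\ref{tau_xyxy}).

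The main obstacle is bookkeeping: we must check that in the expansion every word whose $\tau$ does not factor trivially is an alternating product of centered elements. This holds because positions $1,3$ carry $x$ and positions $2,4$ carry $y$. When scalar letters are deleted from a word, the remaining centered letters can become adjacent and equal, for example $x^\circ$ at positions $1$ and $3$ with the $y$ at position $2$ replaced by $b$. We handle exactly these cases by recentering squares, as above, and we enumerate the two-letter cases explicitly rather than appealing to the general moment–cumulant formula.
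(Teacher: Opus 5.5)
Your proposal is correct and follows the paper's argument: center $x$ and $y$, expand $\tau(xyxy)$ into sixteen words, and use freeness to kill every mixed word, leaving $a^2b^2 + b^2\tau((x^\circ)^2) + a^2\tau((y^\circ)^2)$. One slip: your claim that every three-letter word alternates is false, since replacing the letter in position $2$ (resp.\ $3$) by its mean gives $b\,(x^\circ)^2y^\circ$ (resp.\ $a\,x^\circ(y^\circ)^2$); your own recentering step still disposes of these, e.g.\ $\tau((x^\circ)^2y^\circ)=\tau(((x^\circ)^2)^\circ y^\circ)+\tau((x^\circ)^2)\tau(y^\circ)=0$, and this care is more rigorous than the paper, whose blanket claim that every mixed word in $x',y'$ has zero trace fails in general (e.g.\ for $(x')^2(y')^2$).
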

{\bf Proof:} Write $x' = x - \tau(x)$ and $y' = y - \tau(y)$ (hence $\tau(x') = 0 = \tau(y')$). 
Then $x', y'$ are freely independent, so for any word
$w = {x'}^{i_1} {y'}^{j_1} \ldots {x'}^{i_k} {y'}^{j_k}$, with $\sum_{r=1}^k i_r > 0$, and $\sum_{r=1}^k j_r > 0$, 
then $\tau(w) = 0$. 
Using this, first of all
$$0 = \tau(x' y' ) = \tau((x - \tau(x))(y - \tau(y))) = \tau(xy) - \tau(x) \tau(y)$$
Hence $\tau(xy) = \tau(x)\tau(y)$. 
For (\ref{tau_xyxy})  
first of all note that 
$\tau((x')^2) = \tau(x^2) - \tau(x)^2$. 
Then write 
$\tau(xyxy) = \tau( (x' + \tau(x)) (y' + \tau(y))(x' + \tau(x)) (y' + \tau(y)))$, expand and simplify.
$\myqed$\\

By Theorem \ref{voiculescus_theorem},
$A$ and $\overline{B}$ are asymptotically free for large $d$.
Assuming freeness, then 
$\tau(A \overline{B}) = \tau(A) \tau(\overline{B}) = \tau(A) \tau(B)$, 
since 
$\Tr({\overline{B}}^p) = \Tr(B^p)$ for any positive integer $p$.
Using (\ref{tau_xyxy}),
$$\tau(A \overline{B} A \overline{B}) 
= \tau(A^2) \tau(\overline{B})^2 + \tau(A)^2 \tau({\overline{B}}^2) - \tau(A)^2 \tau(\overline{B})^2
= \tau(A^2) \tau(B)^2 + \tau(A)^2 \tau(B^2) - \tau(A)^2 \tau(B)^2
$$
Hence (\ref{nu_psi_intermediate}) becomes
\begin{equation}
\label{nu_psi_nu_lambda_nu_phi}
{\frac
{(\tau (Y))^2}
{\tau (Y^2)}
}
=
{
\frac
{\tau(A)^2 \tau(B)^2}
{\tau(A^2) \tau(B)^2 + \tau(A)^2 \tau(B^2) - \tau(A)^2 \tau(B)^2}
}
=
{\frac
{
{\tfrac{1}{d}} \nu( \blambda) * {\tfrac{1}{d}} \nu( \bphi)
}
{
{\tfrac{1}{d}} \nu( \blambda) + {\tfrac{1}{d}} \nu( \bphi) - ({\tfrac{1}{d}} \nu( \blambda) * {\tfrac{1}{d}} \nu( \bphi))
}
}
\end{equation}
Since $0 \le {\tfrac{1}{d}} \nu( \blambda),  {\tfrac{1}{d}} \nu( \bphi) \le 1$, then finding upper and lower bounds for 
${\tfrac{1}{d}} \nu(\bpsi)$ reduces to finding upper and lower bounds for the function
$f(s,t) := {\frac{st}{s + t - st}}$ on the unit square $[0,1]^2$.
It is immediate that
$0 \le f(s,t) \le 1$, with the lower and upper bounds being attained only at $(0,0)$ and $(1,1)$ respectively.
Writing 
$s + t - st = (1-t)s + t.1 = (1-s)t + s.1$
it follows that
$$\max\{s,t\} \le s + t - st \le 1 \quad \implies \quad
st \le f(s,t) \le {\frac{st}{\max \{s,t\}}} = \min\{s,t\}$$
As a result, 
\begin{equation}
\label{nearly_done}
{\tfrac{1}{d}} \nu( \blambda) * {\tfrac{1}{d}} \nu( \bphi) 
\le 
{\frac
{(\tau (Y))^2}
{\tau (Y^2)}
}
\le
\min \{	 {\tfrac{1}{d}} \nu( \blambda) , {\tfrac{1}{d}} \nu( \bphi) \}
\end{equation}
We need the following:

\begin{lemma} 
\label{tr_Y_all_squared_over_tr_Ysquared}
For $Y = U \diag (\bpsi) V^t$, with $\psi$ Pareto, then for large $d$
${\frac
{(\tau (Y))^2}
{\tau (Y^2)}
} 
\rto 
{\tfrac{1}{d}}
\nu(\bpsi)$ 
almost surely.
\end{lemma}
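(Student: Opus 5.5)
The plan is to pass through the eigenvalues of $Y$ rather than its singular values, and then use the Pareto hypothesis to show that the two spectra give the same normalised Patnaik-Pearson dimension in the limit.

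\textbf{Step 1: an exact identity for the eigenvalues.} In the setting where Lemma \ref{tr_Y_all_squared_over_tr_Ysquared} is used, $Y = AQBQ^t$ with $A, B$ non-negative diagonal. Then $Y$ is similar to the positive semidefinite matrix $A^{\half} \overline{B} A^{\half}$, as already used in (\ref{nu_psi_intermediate}). Hence $Y$ has real non-negative eigenvalues $\bm{\mu} = (\mu_1, \ldots, \mu_d)$. Since $\tau(Y) = \tfrac{1}{d}\sum_i \mu_i$ and $\tau(Y^2) = \tfrac{1}{d}\sum_i \mu_i^2$, we get exactly
$$\frac{(\tau(Y))^2}{\tau(Y^2)} = \tfrac{1}{d}\nu(\bm{\mu}).$$
So the lemma reduces to showing $\tfrac{1}{d}\nu(\bm{\mu}) - \tfrac{1}{d}\nu(\bpsi) \rto 0$ almost surely.

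\textbf{Step 2: one-sided comparison.} Weyl's majorization inequalities give $\sum_i \mu_i \le \sum_i \psi_i$ and $\sum_i \mu_i^2 \le \sum_i \psi_i^2 = \Tr(Y^t Y)$. These relations do not by themselves order the two ratios. Using freeness (Theorem \ref{voiculescus_theorem} and Lemma \ref{dimitri_lemma}), I would compute $\tau(Y^t Y) = \tau(A^2 \overline{B}^2) = \tau(A^2)\tau(B^2)$ and compare it with $\tau(Y^2)$ from (\ref{tau_xyxy}).

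\textbf{Step 3: use the Pareto hypothesis.} Both sequences $\bm{\mu}$ and $\bpsi$ are regularly varying with the same tail exponent $\gamma$. For the eigenvalues this follows from the heuristic in the proof of Theorem \ref{product_conjecture_nu_theorem} (Cizeau--Bouchaud inheritance of tails). For heavy tails the largest singular values and the largest eigenvalues coincide to leading order, because the top eigenvectors localise (Cizeau--Bouchaud, Ben Arous--Guionnet). I would then split on $\gamma$.
\begin{itemize}
\item If $\gamma \le 2$: both second-moment sums are dominated by the single-big-jump terms (\ref{single_big_jump}), while the first-moment sums are $O(d)$ or dominated likewise. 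Both normalised ratios therefore tend to $0$ almost surely, by the theorem establishing (\ref{defn_nu_infty}).
\item If $\gamma > 2$: by the strong law of large numbers as in Theorem \ref{thm_h_Y_d}, $\tfrac{1}{d}\nu(\bpsi) \rto \EE(\psi)^2/\EE(\psi^2)$ almost surely, and similarly for $\bm{\mu}$. One then needs the first two moments of the limiting eigenvalue law and of the limiting singular-value law to give the same ratio.
\end{itemize}

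\textbf{Main obstacle.} The hard step is the $\gamma > 2$ case of Step 3. There the bulk matters, and eigenvalues and singular values of the non-normal matrix $Y$ genuinely differ, since $Y^tY \ne Y^2$ spectrally. The identity can only hold approximately, through the two moment ratios agreeing asymptotically. My first attempt would be to compute both limiting laws via free multiplicative convolution: the law of $\bm{\mu}$ is $\mu_A \boxtimes \mu_B$, and the law of $\bpsi^2$ is $\mu_{A^2} \boxtimes \mu_{B^2}$. I would then check whether the ratios coincide to the order needed for the $\tfrac{1}{d}$-normalised dimension. If they do not, the lemma should be weakened to the inequality coming from Step 2 in the regime $\gamma > 2$, while the almost-sure equality survives in the regime $\gamma \le 2$.
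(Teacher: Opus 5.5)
The paper states this lemma without proof, so the only question is whether your plan closes it. It does not. The step you flag as the main obstacle cannot be carried out, because the claimed identity is false in exactly the regime where the paper uses it.

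Your Step 1 is correct and is the crux. It gives $(\tau(Y))^2/\tau(Y^2)=\tfrac1d\nu(\bm{\mu})$, with $\bm{\mu}$ the eigenvalues of $Y=A\overline{B}$. So the lemma asserts that the eigenvalues and the singular values of this non-normal matrix have the same normalised participation ratio. Under freeness the eigenvalue side equals $st/(s+t-st)$, with $s=\tfrac1d\nu(\blambda)$ and $t=\tfrac1d\nu(\bphi)$. That depends only on the first two moments of $A$ and $B$. By contrast, $\tfrac1d\nu(\bpsi)=\tau(|Y|)^2/\tau(Y^tY)$ involves $\tau(|Y|)=\int\sqrt{x}\,d(\mu_{A^2}\boxtimes\mu_{B^2})(x)$, which depends on the whole laws.

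Here is a concrete counterexample. Let $A$ and $B$ be projections of normalised trace $\tfrac12$, so $s=t=\tfrac12$. The nonzero eigenvalues of $Y$ are the $\cos^2\theta_i$, where the $\theta_i$ are the principal angles between the range of $A$ and a Haar-random half-dimensional subspace; these follow the arcsine law on $[0,1]$. The nonzero singular values are the $\cos\theta_i$. This gives $(\tau(Y))^2/\tau(Y^2)\to\tfrac13$, in agreement with (\ref{nu_psi_nu_lambda_nu_phi}), but $\tfrac1d\nu(\bpsi)\to 4/\pi^2\approx 0.405$. The latter still lies in $[st,\min(s,t)]=[\tfrac14,\tfrac12]$.

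This is the relevant case. The paper applies the lemma inside Theorem \ref{dimitri_theorem} for arbitrary non-negative $\blambda,\bphi$, and then to truncated, hence bounded, Pareto variables, where the hypothesis ``$\psi$ Pareto'' does not even hold. Read literally, with $U,V$ unrelated to each other, the statement fails for trivial reasons: $\tau(U\diag(\bpsi)V^t)=\tfrac1d\sum_i\psi_i(V^tU)_{ii}$ can vanish.

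Your $\gamma\le2$ branch is plausible in its conclusion, but it rests on the Cizeau--Bouchaud tail-inheritance heuristic. It also applies (\ref{defn_nu_infty}), a statement about iid samples, to spectra that are not iid.

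Your fallback does not exist either. As you note yourself, Step 2 orders the numerators and denominators separately but does not order the ratios. What does work is to drop the lemma and prove the bounds of (\ref{product_conjecture_nu_dimitri}) directly for the singular values, using only $\tau(xy)=\tau(x)\tau(y)$ for free $x,y$.

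\emph{Lower bound.} We have $\tau(|Y|)\ge\tau(Y)=\tau(A)\tau(B)$; this is your Weyl step $\sum_i\mu_i\le\sum_i\psi_i$. Your own identity gives $\tau(Y^tY)=\tau(A^2\overline{B}^2)=\tau(A^2)\tau(B^2)$. Together these give $\tfrac1d\nu(\bpsi)\ge st$.

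\emph{Upper bound.} Apply Hölder's inequality to the factorisation $Y=A^{1/2}\cdot(A^{1/2}\overline{B})$. This gives $\tau(|Y|)\le\tau(A)^{1/2}\,\tau(\overline{B}A\overline{B})^{1/2}=\tau(A)\,\tau(B^2)^{1/2}$, hence $\tfrac1d\nu(\bpsi)\le s$. The same argument for $Y^t=\overline{B}^{1/2}\cdot(\overline{B}^{1/2}A)$, which has the same singular values, gives $\tfrac1d\nu(\bpsi)\le t$.

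This yields (\ref{product_conjecture_nu_dimitri}) for $\bpsi$ with no comparison between eigenvalues and singular values at all, which is all the paper needed the lemma for.
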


Applying this to (\ref{nearly_done}) completes the proof of Theorem \ref{dimitri_theorem}. 
$\myqed$\\

To apply the freeness argument above to matrices, we need finite moments of all orders, which we do not have for our heavy-tailed random variables. 
To bridge this gap we work with truncated Pareto distributions, defined as follows. 
For each $K > 1$, define $X_K$ to be the random variable with pdf
$$f_K(t) = {\frac{f(t)}{F(K)}}, \, 1 \le t \le K, \quad f_K (t) = 0 \, \otherwise$$
where $f(t)$ was defined in (\ref{pareto_alpha_pdf_X}) by $f(t) = {\frac{\alpha}{t^{\alpha+1}}}$, 
and 
$F(x) = \int_1^x f(t) dt$.
It follows that
$$\EE (X_K) = 
K 
\left(  
{\frac{K^{\alpha - 1} - 1}{K^\alpha - 1}}
\right)
\left(  
{\frac{\alpha}{\alpha - 1}}
\right),
\quad
\EE (X_K^2) = 
K^2 
\left(  
{\frac{K^{\alpha - 2} - 1}{K^\alpha - 1}}
\right)
\left(  
{\frac{\alpha}{\alpha - 2}}
\right),
\quad \alpha \ne 1, 2
$$
which are both finite for all $\alpha$
and $K > 1$. 
Define
\begin{equation}
\label{sigma_alpha_K}
\sigma(\alpha,K) := 
{\frac{\EE (X_K)^2}{\EE (X_K^2)}}
=
{\frac{\alpha (\alpha - 2)}{(\alpha - 1)^2}}
{\frac{(K^{\alpha - 1} - 1)^2}{(K^\alpha - 1)(K^{\alpha - 2} -1)}}
= 
C(\alpha) {\frac{(K^{\alpha - 1} - 1)^2}{(K^\alpha - 1)(K^{\alpha - 2} -1)}}
\end{equation}
using the notation of (\ref{defn_s_C_alpha_nu_alpha_d}). 
Further, for $\alpha = 1, 2$, then
\begin{equation}
\label{sigma_alpha_K_alpha_1_2}
\sigma(1,K) 
=
{\frac{2 K ( \ln(K))^2}{(K-1)(K^2 - 1)}}
, \quad
\sigma(2, K) 
= 
{\frac{2(K-1)}{(K+1)\ln(K)}}
\end{equation}

\begin{lemma} For all $\alpha > 0$, 
$\lim_{K \rto \infty} {\frac{\EE (X_K)^2}{\EE (X_K^2)}}
= \nuinfty (\alpha)$, as defined in (\ref{defn_nu_infty}).
\end{lemma}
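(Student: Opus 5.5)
The plan is a direct computation from the closed forms (\ref{sigma_alpha_K}) and (\ref{sigma_alpha_K_alpha_1_2}). I do not need any probabilistic input beyond those formulas. For each fixed $\alpha$ I will let $K \rto \infty$ and compare with (\ref{defn_nu_infty}), splitting into the cases $\alpha > 2$, $\alpha = 2$, $1 < \alpha < 2$, $\alpha = 1$ and $0 < \alpha < 1$. These mirror the four cases used for $\nu(\alpha,d)$, with the extra range $\alpha<1$ treated separately. In each case I identify which of $K^{\alpha-1}$, $K^{\alpha}$, $K^{\alpha-2}$ and the constants $1$ dominate in the factor $\frac{(K^{\alpha-1}-1)^2}{(K^\alpha-1)(K^{\alpha-2}-1)}$.

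For $\alpha > 2$ all three exponents are positive. The factor is $\frac{K^{2\alpha-2}(1-K^{1-\alpha})^2}{K^{2\alpha-2}(1-K^{-\alpha})(1-K^{2-\alpha})} \rto 1$, so $\sigma(\alpha,K) \rto C(\alpha) = \nuinfty(\alpha)$. For $\alpha = 2$ I use (\ref{sigma_alpha_K_alpha_1_2}) directly: $\frac{2(K-1)}{(K+1)\ln K} \sim \frac{2}{\ln K} \rto 0$. This agrees both with $\nuinfty(2)=0$ and with $C(2)=0$, so the two branches of (\ref{defn_nu_infty}) match at $\alpha=2$.

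For $1<\alpha<2$ we have $K^{\alpha-2}-1 \rto -1$ and $C(\alpha)<0$. So $\sigma(\alpha,K) \sim C(\alpha)\frac{K^{2\alpha-2}}{-K^{\alpha}} = -C(\alpha)K^{\alpha-2} \rto 0^{+}$. For $\alpha=1$, (\ref{sigma_alpha_K_alpha_1_2}) gives $\sigma(1,K)\sim 2(\ln K)^2/K^2 \rto 0$. For $0<\alpha<1$ we have $(K^{\alpha-1}-1)^2 \rto 1$ and $(K^\alpha-1)(K^{\alpha-2}-1)\sim -K^\alpha$, while $C(\alpha) = \alpha(\alpha-2)/(\alpha-1)^2<0$. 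Hence $\sigma(\alpha,K)\sim -C(\alpha)K^{-\alpha}\rto 0^{+}$. In all cases with $\alpha\le 2$ the limit is $0=\nuinfty(\alpha)$, which completes the identification.

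The only real obstacle is bookkeeping of signs. For $\alpha<2$ both $C(\alpha)$ and one factor of the denominator are negative, and I will check explicitly that the product is positive, as it must be since $\sigma(\alpha,K)$ is a ratio of positive moments. As a sanity check I would also confirm that the special formulas (\ref{sigma_alpha_K_alpha_1_2}) are the $\alpha\rto1,2$ limits of (\ref{sigma_alpha_K}), by L'H\^opital in $\alpha$. This shows the case split introduces no discontinuity. Alternatively, I can simply take those formulas as given, since they come from integrating $t^{-1}$ and $t^{-2}\cdot t^{2}$ directly.
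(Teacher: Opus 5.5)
Your proposal is correct and is essentially the paper's own argument: read off the $K\to\infty$ behaviour of the closed forms (\ref{sigma_alpha_K}) and (\ref{sigma_alpha_K_alpha_1_2}) case by case, with your version making the sign bookkeeping and the range $0<\alpha<1$ explicit. One correction: integrating directly at $\alpha=1$ gives $\EE(X_K)=K\ln K/(K-1)$ and $\EE(X_K^2)=K$, so $\sigma(1,K)=K(\ln K)^2/(K-1)^2\sim(\ln K)^2/K$, not the $2(\ln K)^2/K^2$ you take from the paper's misprinted (\ref{sigma_alpha_K_alpha_1_2}) (your proposed L'H\^opital check as $\alpha\to 1$ would have exposed this), but the limit is still $0$, so your conclusion stands.
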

{\bf Proof:}  For $\alpha > 2$, $\sigma(\alpha, K) 
\approx 
C(\alpha) 
{\frac
{K^{2(\alpha - 1)}}
{K^{2\alpha - 2}}
} 
= C(\alpha)$ 
for large $K$.
For $\alpha \leq 2$, it is immediate from (\ref{sigma_alpha_K}, \ref{sigma_alpha_K_alpha_1_2}) that $\sigma(\alpha, K) \rto 0$ as $K \rto \infty$. 
The result follows. $\myqed$\\

We can now complete the proof of Theorem \ref{product_conjecture_nu_theorem}.
Suppose that $\lambda_K$ and $\phi_K$ are the truncations at $K$ of regularly-varying random variables $\lambda$, $\phi$, with tail exponents $\alpha$ and $\beta$ respectively.

\begin{lemma}
\label{limit_nu_lambdaK_eq_nu_lambda} 
For fixed $d$,  
$\lim_{K \rto \infty} \EE (\nu (\blambda_K ) ) = \EE (\nu (\blambda)) = \nu(\alpha,d)$
\end{lemma}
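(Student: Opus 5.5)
We prove Lemma \ref{limit_nu_lambdaK_eq_nu_lambda} by dominated convergence for fixed $d$, so the heavy tail of $\lambda$ never enters. First we fix the coupling between $\blambda_K$ and $\blambda$. Let $U_1, \ldots, U_d$ be iid $U[0,1]$. Set $\lambda_i = F^{-1}(U_i)$. Set $\lambda_{K,i} = F_K^{-1}(U_i)$, where $F_K(x) = F(x)/F(K)$ on $[1,K]$ is the CDF of the truncated variable. Then $F_K^{-1}(u) = F^{-1}(u F(K))$. Since $F(K) \rto 1$ as $K \rto \infty$ and $F^{-1}$ is continuous on $[0,1)$, we get $\lambda_{K,i} \rto \lambda_i$ for every $i$, for every sample point with all $U_i < 1$, hence almost surely. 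The lemma concerns only expectations, so we may compute $\EE(\nu(\blambda_K))$ under this coupling without loss.

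Second, we use continuity of $\nu$. As a function on $\RR_{\geq 0}^d \backslash \{ {\bf 0} \}$, $\nu$ is a ratio of polynomials whose denominator $\sum_i \lambda_i^2$ is nonzero. All components are $\geq 1$ almost surely, so $\nu$ is continuous at $\blambda$, and $\nu(\blambda_K) \rto \nu(\blambda)$ almost surely.

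Third, we supply the dominating function. By property 1 of the lemma on $\nu$, $1 \leq \nu(\blambda_K) \leq d$ for every $K$. The constant $d$ is integrable, so dominated convergence gives $\EE(\nu(\blambda_K)) \rto \EE(\nu(\blambda))$. The bound $\nu \le d$ is the crucial point: $\nu$ is scale-invariant and bounded, so we need no moment of $\lambda$. This matters because for $\alpha \le 2$ the moments $\EE(\lambda^2)$, and possibly $\EE(\lambda)$, are infinite, and a moment-based argument such as the one behind Theorem \ref{thm_h_Y_d} would fail.

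Finally, $\EE(\nu(\blambda)) = \nu(\alpha,d)$ holds by the definition in (\ref{defn_s_C_alpha_nu_alpha_d}), since the $\lambda_i$ are iid Pareto with exponent $\alpha$. If the intended statement is for a general regularly-varying $\lambda$ and its truncation, the same argument applies verbatim: the quantile coupling needs only a generalised inverse $F^{-1}$, which is continuous at Lebesgue-almost every $u$, and boundedness of $\nu$ does the rest. The main obstacle is this almost-sure convergence of the truncations, including possible atoms and a possible value $0$. If some $\lambda_i$ could be $0$, then $\blambda = {\bf 0}$ has positive probability only when $\PP(\lambda=0) > 0$, so we would exclude that case or set $\nu({\bf 0})$ to any value in $[1,d]$; a null set does not affect the limit.
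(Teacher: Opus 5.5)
Your proof is correct, but it takes a different route from the paper. The paper uses no coupling and no convergence theorem. It splits $\EE(\nu(\blambda))$ over the event $\{\lambda_i \le K \ \forall i\}$, of probability $p = p(d,K,\alpha) = (1-K^{-\alpha})^d$. It notes that, conditionally on this event, $\blambda$ has exactly the law of $\blambdaK$, because the truncated density $f/F(K)$ is the conditional density. It then controls the complementary event using only $1 \le \nu \le d$. This gives the sandwich $p\,\EE(\nu(\blambdaK)) + (1-p) \le \EE(\nu(\blambda)) \le p\,\EE(\nu(\blambdaK)) + (1-p)d$, with $p \rto 1$.

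You instead build the quantile coupling $\lambda_{K,i} = F^{-1}(U_i F(K))$. You obtain $\nu(\blambdaK) \rto \nu(\blambda)$ almost surely from continuity of $F^{-1}$ on $[0,1)$ and of $\nu$ away from ${\bf 0}$, and conclude by dominated convergence with the constant $d$. Both arguments hinge on the same fact: $\nu$ is bounded in $[1,d]$, so no moments of $\lambda$ are needed.

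The paper's version has two advantages. First, it is elementary and quantitative: rearranging the sandwich gives $|\EE(\nu(\blambdaK)) - \nu(\alpha,d)| \le (d-1)(1-p) \le d(d-1)K^{-\alpha}$. This explicit rate says how large $K$ must be relative to $d$, which matters because the surrounding argument mixes a large-$d$ regime with $K \rto \infty$. Second, the same conditioning transfers verbatim to $\bpsi$ in Theorem \ref{product_conjecture_nu_theorem}. Your coupling would additionally need continuity of the singular values of $\diag(\blambda) Q \diag(\bphi) Q^t$ in $(\blambda,\bphi)$, which is true but an extra step.

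In exchange, your version gives a pathwise almost-sure statement. It extends to any approximating family that converges almost surely, without requiring the approximant to coincide with $\blambda$ on a high-probability event. As written, however, it yields no rate.
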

{\bf Proof:}
Suppose we have independent draws $\lambda_1, \ldots \lambda_d$ from $\lambda$. 
Since $\PP(\lambda \le K) = 1 - {\tfrac{1}{K^\alpha}}$, then
$$\PP(\lambda_i \le K \, \forall \, 1 \le i \le d) = (1 - {\tfrac{1}{K^\alpha}})^d =: \pdka$$
We can write
$$
\EE(\nu(\blambda)) 
= 
\pdka \EE ( \nu(\blambda) \, | \, \lambda_i \le K \, \forall \, i) 
+
(1 - \pdka) \EE ( \nu(\blambda) \, | \, \exists \, i \, s.t. \, \lambda_i > K)
$$ 
Now, 
$
\EE ( \nu(\blambda) \, | \, \lambda_i \le K \, \forall \, i) 
= \EE( \nu(\blambda_K))
$, 
whereas for the second term, if there exists $i$ such that $\lambda_i > K$, 
all we can say about $\nu(\blambda)$ is that 
$1 \le \nu(\blambda) \le d$. Hence
$$
\pdka \EE( \nu(\blambdaK)) + (1 - \pdka) 
\le 
\EE(\nu(\blambda)) 
\le 
\pdka \EE( \nu(\blambdaK)) + (1 - \pdka) d
$$
As $K \rto \infty$, then $\pdka \rto 1$, and $(1 - \pdka)$ and $(1 - \pdka) d$ vanish. 
The result follows. 
$\myqed$\\

Continuing this argument, given independent draws 
$\lambda_1, \ldots ,\lambda_d$ from $\lambda$, 
and $\phi_1, \ldots , \phi_d$ from $\phi$,
then
$$
\PP(\lambda_i, \phi_i \le K \, \forall i) 
= (1 - {\tfrac{1}{K^\alpha}})^d (1 - {\tfrac{1}{K^\beta}})^d 
=: \pdkab
$$
Hence
$$\EE(\nu (\bpsi )) 
= \pdkab \EE( \nu (\bpsi ) \, | \, \lambda_i, \phi_i \le K \, \forall \, i )
+ (1 - \pdkab) 
\EE( \nu(\bpsi) 
\, | \, 
\exists 
\, i \, 
s.t. 
\, 
\lambda_i > K 
\, or \, 
\phi_i > K )
$$
The first term is determined by (\ref{product_conjecture_nu_dimitri}), namely:
$$
\EE 
\left( 
{\tfrac{1}{d}} 
\nu(\blambdaK) 
* 
{\tfrac{1}{d}} 
\nu(\bphiK) 
\right)
\leq 
{\tfrac{1}{d}} \EE( \nu (\bpsi ) \, | \, \lambda_i, \phi_i \le K \, \forall \, i )
\leq 
\EE \left( \min \{ {\tfrac{1}{d}} \nu(\blambdaK), {\tfrac{1}{d}} \nu(\bphiK) \} \right)
$$
By independence of $\lambda$ and $\phi$ this becomes:
\begin{equation}
\label{expectation_nu_psi_K}
{\tfrac{1}{d}} \EE (\nu(\blambdaK)) * {\tfrac{1}{d}} \EE(\nu(\bphiK))
\leq 
{\tfrac{1}{d}} \EE(\nu(\bpsi ) \, | \, \lambda_i, \phi_i \le K \, \forall \, i )
\leq 
{\tfrac{1}{d}} \EE( \min \{ \nu(\blambdaK), \nu(\bphiK) \})
\end{equation}
The second term is controlled by the boundedness of $\nu(\bpsi)$:
$$
1 \le 
\EE( \nu(\bpsi) 
\, | \, 
\exists 
\, i \, 
s.t. 
\, 
\lambda_i > K 
\, or \,
\phi_i > K )
\le d
$$
Combining these two, it follows that
\begin{equation}
\label{product_conjecture_nu_almost_there}
\begin{split}
& \pdkab * {\tfrac{1}{d}} \EE (\nu(\blambdaK)) * {\tfrac{1}{d}} \EE(\nu(\bphiK)) 
+
(1 - \pdkab)\\
& \le \EE( \nu(\bpsi) )\\
& \le 
\pdkab * {\tfrac{1}{d}} \EE( \min \{ \nu(\blambdaK)), \nu(\bphiK) \}) 
+
(1 - \pdkab) d
\end{split}
\end{equation}
As $K \rto \infty$, then $\pdkab \rto 1$, both $(1 - \pdkab)$ and $(1 - \pdkab) d$ vanish, and 
by Lemma \ref{limit_nu_lambdaK_eq_nu_lambda}, 
$\EE (\nu(\blambdaK)) 
\rto 
\EE 
(
\nu(\blambda)
)$, 
and 
$\EE(\nu(\bphiK)) \rto \EE(\nu(\bphi))$.
Thus (\ref{product_conjecture_nu_almost_there}) reduces to (\ref{product_conjecture_nu}).
This completes the proof of Theorem \ref{product_conjecture_nu_theorem}.
$\myqed$\\

As a corollary of this, we have proved Conjecture \ref{pp_dim_lower_upper_bound_conjecture}
in a specific case.

\begin{definition}
For positive integers $N$, $d$ with $N \ge d$, and real $\alpha > 0$, we define the Pareto ensemble $P(\alpha, N,d)$ to be the collection of 
real-valued random matrices $X$ of the form $X = U S V^t$, where $U$ is $N \times N$, $S$ is $N \times d$ and $V$ is $d \times d$; $U$ and $V$ are orthogonal, and $S$ is diagonal with $S_{ii} = \lambda_i$, $1 \le i \le \min(N,d)$, and $\lambda_i$ are independent draws from $\lambda$, which is Pareto with tail exponent $\alpha$. 
This implicitly includes the light-tailed case where the distribution of $\lambda$ is bounded ($\alpha = \infty$). 
\end{definition}
Note that this resembles the Wigner-Levy ensemble defined by Cizeau and Bouchaud in \cite{levy_matrices}.

\begin{theorem}
\label{theorem_pp_product_pareto_ensemble}
For $X \in P(\alpha, N,d)$, $W \in P(\beta, d,d)$, then 
for sufficiently large $N$, $d$, 
\begin{equation}
{\tfrac{1}{d}} \EE(\PatnaikPearson(X)) * {\tfrac{1}{d}} \EE(\PatnaikPearson(W)) \leq
{\tfrac{1}{d}} \EE(\PatnaikPearson(XW))
\leq {\tfrac{1}{d}} \EE(\min \{ \PatnaikPearson(X), \PatnaikPearson(W) \})
\end{equation}
almost surely.
\end{theorem}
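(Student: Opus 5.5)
The plan is to reduce Theorem \ref{theorem_pp_product_pareto_ensemble} to Theorem \ref{product_conjecture_nu_theorem}, using the orthogonal invariance (\ref{pp_invariant_under_orthogonals}). Write $X = U_X S_X V_X^t$ and $W = U_W S_W V_W^t$ as in the definition of the Pareto ensemble. The diagonal entries of $S_X$ are iid draws $\blambda$ from a Pareto law with exponent $\alpha$, and those of $S_W$ are iid draws $\bphi$ from a Pareto law with exponent $\beta$. Since $S_X$ has only $d$ nonzero diagonal entries, $\PatnaikPearson(X) = \nu(\blambda)$ and $\PatnaikPearson(W) = \nu(\bphi)$. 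Here I take the Pareto ensemble convention that the PP dimension is computed directly from the singular values. The one point to check is that centring ($\Xresid$ versus $X$) does not change the spectrum asymptotically. I would either assume the ensemble is centred, as was done for $W$ in Section \ref{correspondence_htsr_setol_pp_dim}, or note that subtracting ${\bf 1}_N \xoverline^T$ is a rank-one perturbation, which changes $\nu/d$ by $O(1/d)$ after interlacing.

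Next I would carry out the computation preceding Theorem \ref{product_conjecture_nu_theorem}. We have $XW = U_X S_X Q S_W V_W^t$ with $Q = V_X^t U_W$. Trimming $S_X$ to its $d\times d$ block, left multiplication by $U_X$ and right multiplication by $V_W^t$ preserve $\nu$ of the singular values. Hence $\PatnaikPearson(XW) = \nu(\bpsi)$, where $\bpsi$ are the singular values of $\diag(\blambda) Q \diag(\bphi)$. To match the form $AQBQ^t$ used in Theorem \ref{product_conjecture_nu_theorem}, I would right multiply by the orthogonal matrix $Q^t$, which leaves the singular values unchanged. For $Q$ to be Haar distributed, as Theorem \ref{voiculescus_theorem} requires, I need $V_X$ and $U_W$ to be independent and Haar. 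I would take this as part of the "general position/randomly chosen" hypothesis on the ensemble, and justify it by noting that if $U_W$ is Haar and independent of $V_X$, then $Q$ is Haar. Independence of $\blambda$ and $\bphi$ from $Q$ likewise comes from the ensemble construction.

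With these identifications, Theorem \ref{product_conjecture_nu_theorem} applies verbatim with $\lambda$ and $\phi$ Pareto. Pareto laws are regularly varying, and the bounded case $\alpha = \infty$ is handled directly by Theorem \ref{dimitri_theorem}, since all moments are finite. Dividing by $d$ gives the two stated inequalities in expectation. The "almost surely" qualifier should be read as the inequalities holding for all sufficiently large $d$ (and $N$), since both sides are deterministic expectations. If a pathwise statement is wanted, I would instead apply Theorem \ref{dimitri_theorem} realisation by realisation, together with Lemma \ref{tr_Y_all_squared_over_tr_Ysquared}, on the event that all $\lambda_i, \phi_i \le K$.

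The main obstacle is the Haar and independence requirement on $Q$, together with controlling the error terms of asymptotic freeness uniformly enough to take expectations. Theorem \ref{voiculescus_theorem} only yields the moment identities of Lemma \ref{dimitri_lemma} in the limit. I would handle this by working with the $K$-truncated variables, where all moments are bounded by powers of $K$, so that the convergence of $\tau(A\overline{B})$ and $\tau(A\overline{B}A\overline{B})$ holds in expectation at fixed $K$. I would then let $K \rto \infty$ via Lemma \ref{limit_nu_lambdaK_eq_nu_lambda} and the sandwich (\ref{product_conjecture_nu_almost_there}), taking the limits in the order $d \rto \infty$ first and then $K \rto \infty$.
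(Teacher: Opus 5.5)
Your reduction is the paper's own proof: the paper derives this theorem in one line from Theorem \ref{product_conjecture_nu_theorem}, after using (\ref{pp_invariant_under_orthogonals}) to write $\PatnaikPearson(XW)=\nu(\bpsi)$, where $\bpsi$ are the singular values of $\diag(\blambda)\,Q\,\diag(\bphi)\,Q^t$ and $Q=V_X^tU_W$. Your remarks that $Q$ must be Haar and independent of $\blambda,\bphi$, and that ``almost surely'' is vacuous for deterministic expectations, are correct and more careful than the paper. On centring, simply assume the ensemble is centred. Interlacing only bounds the change in $\sum_i\lambda_i$ and $\sum_i\lambda_i^2$ by $\lambda_{\max}$ and $\lambda_{\max}^2$, and this is not $O(1/d)$ for Pareto spectra: for $\alpha<2$, $\lambda_{\max}^2$ is of the same order as $\sum_i\lambda_i^2$.

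The real problem is that both steps you add to make the argument rigorous fail. First, the order of limits. At fixed $K$, $p(d,K,\alpha,\beta)=(1-K^{-\alpha})^d(1-K^{-\beta})^d\to 0$ as $d\to\infty$. So if you send $d\to\infty$ first, the sandwich (\ref{product_conjecture_nu_almost_there}) becomes vacuous, and Lemma \ref{limit_nu_lambdaK_eq_nu_lambda} is in any case a fixed-$d$ statement. Sending $K\to\infty$ first, as the paper implicitly does, instead needs (\ref{product_conjecture_nu_dimitri}) as an exact inequality at fixed $d$, which asymptotic freeness does not give. What you actually need is a level $K(d)$ with $d\,K(d)^{-\min(\alpha,\beta)}\to 0$, together with non-asymptotic freeness estimates for spectra of that size. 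Moreover, when $\min(\alpha,\beta)\le 2$ both outer bounds tend to $0$ by (\ref{defn_nu_infty}), so an exact inequality for all large $d$ requires comparing rates of decay, not limits. Passing to expectations is not the issue, since $\nu/d\in(0,1]$.

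Second, your pathwise route uses Lemma \ref{tr_Y_all_squared_over_tr_Ysquared}, and Theorem \ref{dimitri_theorem} (hence your ``applies verbatim'') rests on the same identity $(\tau(Y))^2/\tau(Y^2)=\tfrac1d\nu(\bpsi)$. That identity is false. The matrix $Y=A\overline{B}$, with $\overline{B}=QBQ^t$, is not normal. So $(\tau(Y))^2/\tau(Y^2)$ is the participation ratio of its eigenvalues (those of $A^{1/2}\overline{B}A^{1/2}$), whereas $\tfrac1d\nu(\bpsi)=(\tau|Y|)^2/\tau(Y^*Y)$. A concrete counterexample: take $A,B$ diagonal $0/1$ projections of trace $\tfrac12$. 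Then (\ref{nu_psi_nu_lambda_nu_phi}) gives $\tfrac13$. But $Y^*Y$ has limiting law $\tfrac12\delta_0$ plus one half of the arcsine law on $[0,1]$, so $\tau|Y|\to 1/\pi$, $\tau(Y^*Y)\to\tfrac14$, and $\tfrac1d\nu(\bpsi)\to 4/\pi^2\approx 0.405$.

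The bounds (\ref{product_conjecture_nu_dimitri}) nevertheless survive, by a singular-value argument. Put $s=\tfrac1d\nu(\blambda)$ and $t=\tfrac1d\nu(\bphi)$.
\begin{itemize}
\item The denominator satisfies $\tau(Y^*Y)=\tau(A^2\overline{B}^2)\to\tau(A^2)\tau(B^2)$.
\item For the lower bound, $\tau|Y|\ge\tau(Y)\to\tau(A)\tau(B)$, which gives $\tfrac1d\nu(\bpsi)\ge st$ in the limit.
\item For the upper bound, use $\tau|Y|=\max_U|\tau(UY)|$ with $\tau(UY)=\tau(A^{1/2}\overline{B}\cdot UA^{1/2})$, and apply Cauchy--Schwarz. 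This gives $(\tau|Y|)^2\le\tau(A)\tau(A\overline{B}^2)\to\tau(A)^2\tau(B^2)$.
\item The same estimate applied to $Y^*=\overline{B}A$ gives $(\tau|Y|)^2\le\tau(\overline{B})\tau(A^2\overline{B})\to\tau(A^2)\tau(B)^2$.
\end{itemize}
Together the last two give $\tfrac1d\nu(\bpsi)\le\min\{s,t\}$; the value $4/\pi^2$ above indeed lies in $[\tfrac14,\tfrac12]$. A proof of the theorem needs this argument in place of (\ref{nu_psi_nu_lambda_nu_phi}), together with a correct treatment of the truncation.
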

{\bf Proof:} This is immediate from Theorem \ref{product_conjecture_nu_theorem} $\myqed$.\\

Theorem \ref{theorem_pp_product_pareto_ensemble} relies on $X$ and $W$ being in general position relative to one another, so in particular it needs to be modified for the product $X^T X$, as shown in Figure \ref{fig:nu_over_d_X_nu_over_d_XTX_nu_over_d_XT_times_nu_over_d_X_pareto.pdf}. In this case we have:

\begin{lemma} Suppose that $X$ is $N \times d$, with $\PatnaikPearson(X) = \nu(\blambda)$, for $\blambda \in \RR^d$ arising from the SVD decomposition of $X$. 
Then 
$\PatnaikPearson(X^T X)
= \nu (\blambda^2)$.
\end{lemma}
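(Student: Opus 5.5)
The plan is to compute the singular values of $X^T X$ directly from the SVD of $X$, and then apply the definition of $\PatnaikPearson$ together with the orthogonal invariance (\ref{pp_invariant_under_orthogonals}).

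First I will write $X = U S V^T$ with $U$ orthogonal $N \times N$, $V$ orthogonal $d \times d$, and $S$ diagonal $N\times d$ with diagonal entries $\lambda_1,\ldots,\lambda_d \ge 0$. I will take $X = \Xresid$ here, as in the hypotheses of (\ref{pp_invariant_under_orthogonals}), so that $\PatnaikPearson(X)=\nu(\blambda)$ as in the statement. Then
$$X^T X = V S^T U^T U S V^T = V (S^T S) V^T,$$
and $S^T S = \diag(\lambda_1^2,\ldots,\lambda_d^2)$ is a $d\times d$ diagonal matrix with non-negative entries.

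This expression is simultaneously an SVD of $X^T X$, with both orthogonal factors equal to $V$. So the singular values of $X^T X$ are exactly the entries of $\blambda^2 = (\lambda_i^2)_{1\le i\le d}$. Equivalently, $\PatnaikPearson(X^T X)=\PatnaikPearson(V\,\diag(\blambda^2)\,V^T)=\PatnaikPearson(\diag(\blambda^2))$ by (\ref{pp_invariant_under_orthogonals}). Applying (\ref{pp_int_dim}) then gives $\nu(\blambda^2) = (\sum_i \lambda_i^2)^2/\sum_i \lambda_i^4$.

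The only genuine subtlety is the centering step in the definition of $\PatnaikPearson$: the definition subtracts the row mean before taking the SVD, and $X^T X$ need not have zero column means. I would handle this exactly as the paper does for (\ref{pp_invariant_under_orthogonals}) and for products such as $\PatnaikPearson(XW)$. That is, I interpret $\PatnaikPearson$ of a matrix product as $\nu$ applied to its singular values, in line with the remark that the definition ``makes sense for any matrix''. Under this convention the lemma is immediate.

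I would add a remark that, in contrast with Theorem \ref{theorem_pp_product_pareto_ensemble}, $X^T$ and $X$ are not in general position. For Pareto $\lambda$ with exponent $\alpha$, the entries $\lambda_i^2$ are Pareto with exponent $\alpha/2$. This explains the behaviour observed in Figure \ref{fig:nu_over_d_X_nu_over_d_XTX_nu_over_d_XT_times_nu_over_d_X_pareto.pdf}.
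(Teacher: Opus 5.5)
Your proof is correct and is essentially the paper's: both take $X = \Xresid$, compute $X^TX = V(S^TS)V^T$ with $S^TS = \diag(\lambda_1^2,\ldots,\lambda_d^2)$, and read off $\nu(\blambda^2)$ from the singular values. Your remark that $X^TX$ need not be column-centred, so that $\PatnaikPearson(X^TX)$ must be read as $\nu$ of its singular values, makes explicit a convention the paper relies on without comment.
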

{\bf Proof:} Following (\ref{pp_int_dim}) and assuming that $X = \Xresid$, we start with $X = US V^T$, where $S$ is $N \times d$, with diagonal elements $\lambda_i$, $1 \leq i \leq d$, and $\blambda = (\lambda_i) \in \RR^d$, then 
$$X^T X = ( U S V^T)^T (U S V^T) = V (S^T S) V^T$$
and $S^T S$ is $d \times d$ with diagonal elements $\lambda_i^2$. The result follows. $\myqed$

\begin{figure}
    \centering
    \includegraphics[width=0.45\textwidth]{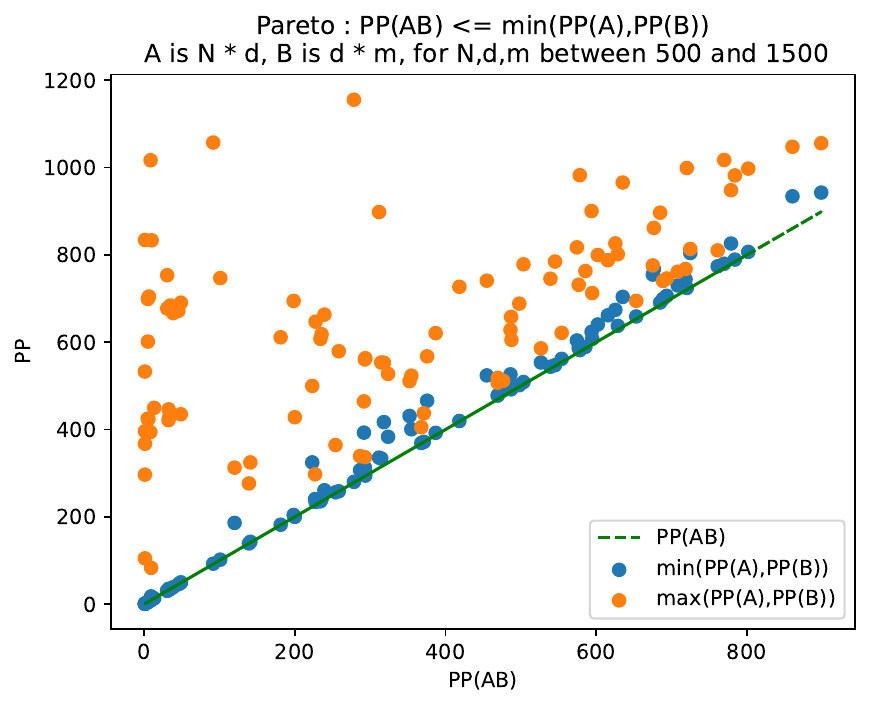} 
    \includegraphics[width=0.45\textwidth]{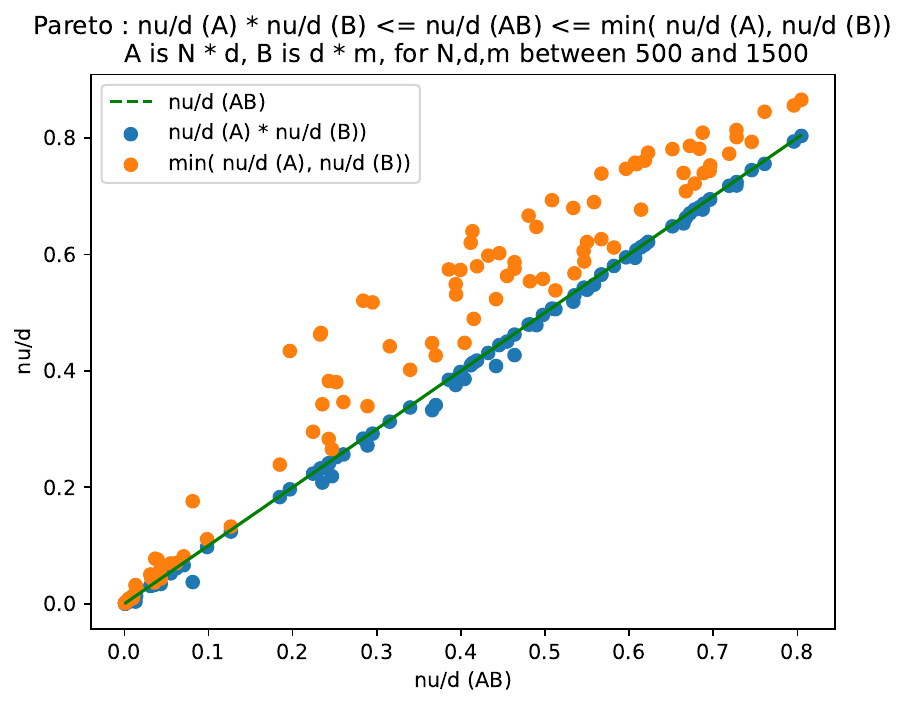}
    \caption{Product hypotheses : Conjecture \ref{pp_dim_lower_upper_bound_conjecture}, Theorem \ref{theorem_pp_product_pareto_ensemble}.} 
    \label{fig:product_hypothesis_new}
\end{figure}

\begin{figure}
    \centering
    \includegraphics[width=0.45\textwidth]{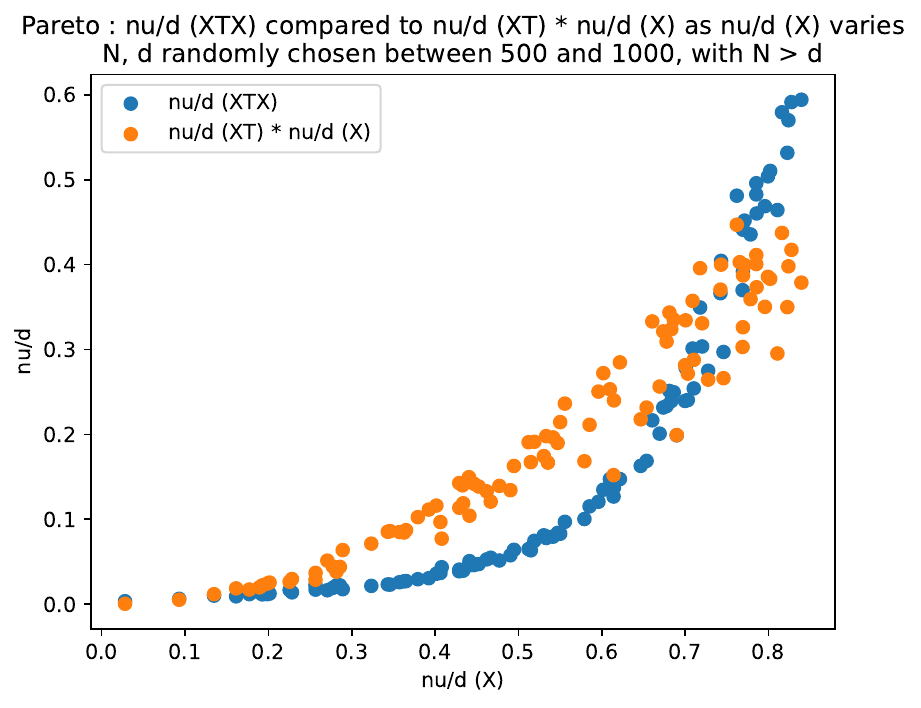} 
    \includegraphics[width=0.45\textwidth]{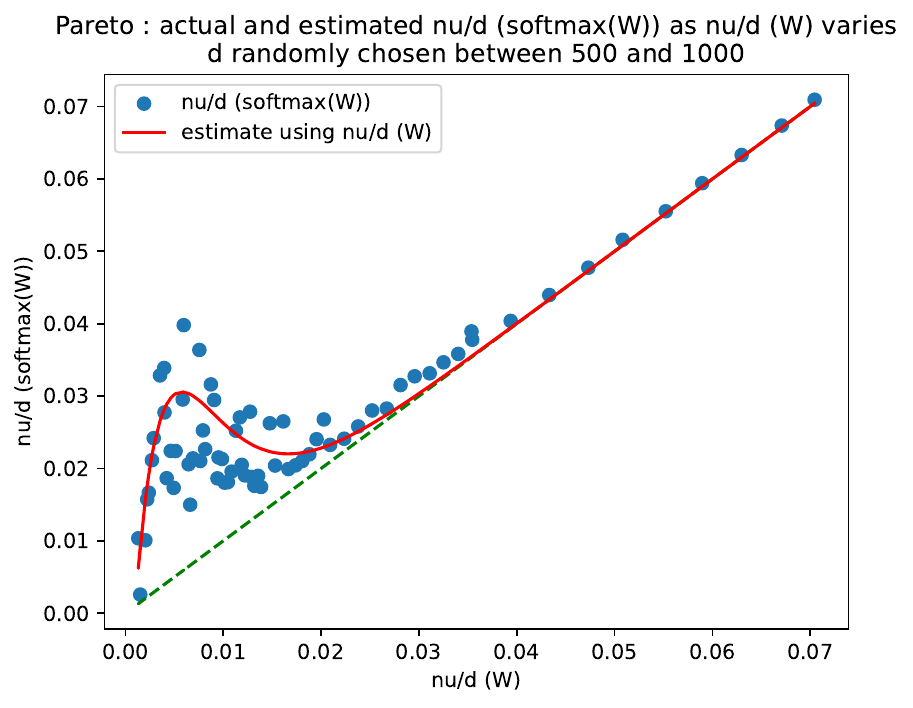} 
    \caption{(a) Product hypothesis (Conjecture \ref{pp_dim_lower_upper_bound_conjecture}) is not satisfied by $X^T X$. (b) $\PatnaikPearson(\softmax(W))$ as a function of $\PatnaikPearson(W)$, using (\ref{pp_dim_softmax_approx}, \ref{defn_sf}).} 
    \label{fig:nu_over_d_X_nu_over_d_XTX_nu_over_d_XT_times_nu_over_d_X_pareto.pdf}
	\label{fig:nu_over_d_softmaxW_actual_vs_estimated_as_nu_over_d_W_varies_pareto.pdf}
\end{figure}

\subsection{Softmax}

In general there is no clear relation between $\PatnaikPearson(\softmax(X))$ and $\PatnaikPearson(X)$, because we have defined $\PatnaikPearson(X)$ to be invariant under rotations, translations and positive scalings, whereas $\softmax(X)$ has no such invariance. However in ``typical" cases we present a close approximation.

\begin{definition} For an $N \times d$ data manifold $X$, define $\softmax(X)$ to be the row-wise softmax with inverse temperature $\beta > 0$, namely
\begin{equation}
\label{def_softmax}
{\softmax(X)}_{ij} = \exp(\beta X_{ij}) / \sum_{k=1}^d \exp(\beta X_{ik})
\end{equation}
\end{definition}

In the limit as $\beta \rto 0^{+}$, then all entries tend to ${\tfrac{1}{d}}$. 
So for small $\beta$,  $\softmax(X) = {\tfrac{1}{d}} {\bf 1}_{N \times d} + \Delta$, where $\Delta$ is ``random noise" of shape $N \times d$ with individual terms tending to zero. However, by the definition of Patnaik-Pearson dimension, the row-wise demeaning procedure removes the ${\bf 1}_{N \times d}$ term and $\PatnaikPearson(\softmax(X)) = \PatnaikPearson(\Delta)$, which can take arbitrary values in the range $[1,d]$. 
 
In the limit as $\beta \rto \infty$, then $\softmax(X)$ converges to a matrix with exactly one non-zero entry (of 1) in each row, in position (for row $i$) given by $\argmax_k \{ X_{ik} \}$.

For the case 
$\beta = 
{\tfrac{1}{\sqrt{d}}}$, 
and  $N, d = O(1000)$, then $\PatnaikPearson(\softmax(X))$ is well-approximated by 
\begin{equation}
\label{pp_dim_softmax_approx}
\PatnaikPearson(\softmax(X)) \approx \PatnaikPearson(X) + \sff ( {\tfrac{1}{d}} \PatnaikPearson(X) )
\end{equation}
where 
\begin{equation}
\label{defn_sf}
\sff (x) = 22.0 * (x - 0.001) * \exp( -250 x)
\end{equation}
See Figure \ref{fig:nu_over_d_softmaxW_actual_vs_estimated_as_nu_over_d_W_varies_pareto.pdf}.
For small values of $\PatnaikPearson(X)$, softmax increases Patnaik-Pearson dimension, 
whereas for larger values of $\PatnaikPearson(X)$, we see that $\PatnaikPearson(\softmax(X)) \approx \PatnaikPearson(X)$.

\subsection{Attention}

Following the notation of \cite{complete_mot}, we define scaled dot product attention as follows. 
For $X$ of shape $N \times d$, and weight matrices  $W^Q$, $W^K$,$W^V$ all of shape $d \times d$, define the query, key and value matrices 
\begin{equation}
\label{qkv_defn}
Q = X W^Q, \quad K = X W^K, \quad V = X W^V
\end{equation}
all of which are $N \times d$. Then
\begin{equation}
\label{attention_defn}
\Attention(Q, K, V) = 
\softmax
\left(
{\frac{Q K^T}{\sqrt{d}}}
\right)
V
\end{equation}
where softmax is applied row-wise, as in (\ref{def_softmax}) (with $\beta = 1$ here). 

For our analysis of the Patnaik-Pearson dimension, we build up to this step by step. 
Given $X$, $W$ of shape $N \times d$ and $d \times d$ respectively, the Patnaik-Pearson dimension for $XW X^T$ is well-approximated using the formulae (\ref{pp_dim_lower_upper_bound_main_theorem}).
$$
{\tfrac{1}{N}} \PatnaikPearson(XW X^T) 
\approx 
{\tfrac{1}{d}} \PatnaikPearson(X) 
* 
{\tfrac{1}{d}} \PatnaikPearson(W) 
* 
{\tfrac{1}{N}} \PatnaikPearson(X^T)
$$
See Figure \ref{fig:nu_over_d_XWXT_actual_vs_estimate_pareto.pdf}.

Numerical experiments choosing $N$ and $d$ randomly between 500 and 1000, with $N > d$, randomly choosing the Patnaik-Pearson dimension (between 1 and $d$) for $X$, $W^Q$, $W^K$ and $W^V$,  and calculating $\Attention(Q, K, V)$ 
show reasonable agreement between 
$\PatnaikPearson(\Attention(Q, K, V))$ and the estimate given by combining (\ref{pp_dim_lower_upper_bound_main_theorem}, \ref{pp_dim_softmax_approx}, \ref{defn_sf}) - see Figure \ref{fig:nu_over_d_XWXT_actual_vs_estimate_pareto.pdf}.

\begin{figure}
    \centering
    \includegraphics[width=0.45\textwidth]{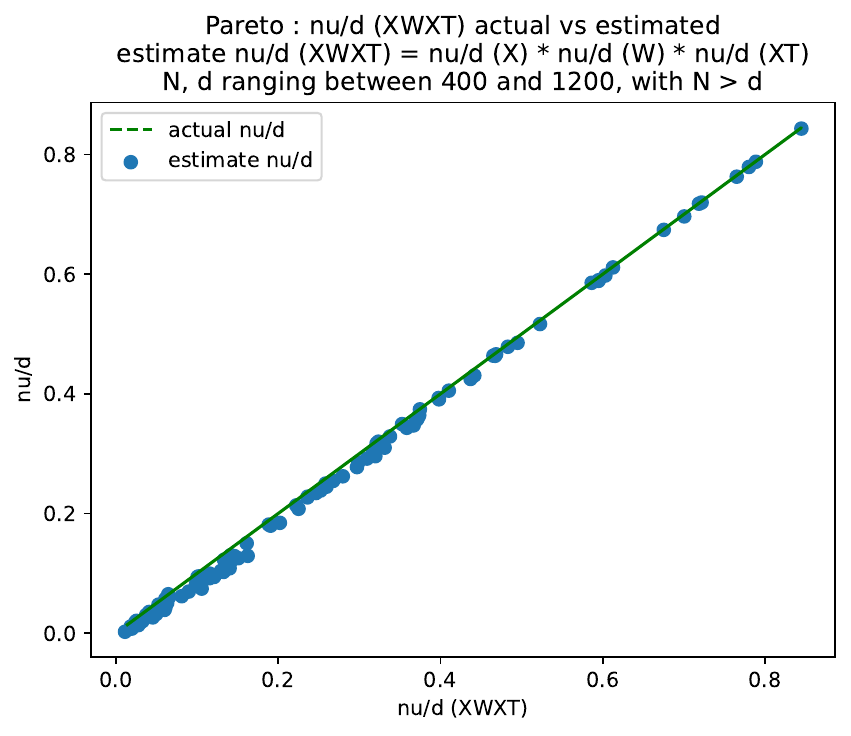} 
    \includegraphics[width=0.45\textwidth]{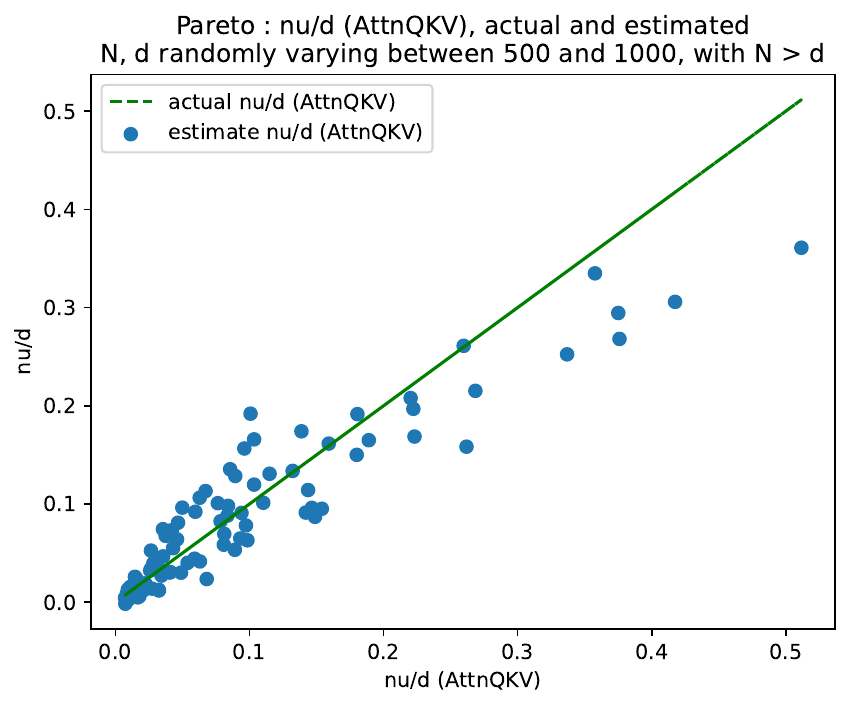} 
    \caption{(a) Patnaik-Pearson dimension for $X W X^T$ using Theorem \ref{theorem_pp_product_pareto_ensemble}. (b) Patnaik-Pearson dimension of $\Attention(Q, K, V)$, using (\ref{pp_dim_softmax_approx}, \ref{defn_sf})} 
    \label{fig:nu_over_d_XWXT_actual_vs_estimate_pareto.pdf}
\end{figure}

\subsection{Activation functions : ReLU}

We investigate the effect of the ReLU 
activation function.
For $X$ an $N \times d$ data manifold, define
\begin{equation}
\label{defn_relu}
(\ReLU(X))_{ij} = 
\left\{
\begin{aligned}
X_{ij} \, : \, X_{ij} \geq 0 \\
0 \, : \, X_{ij} < 0
\end{aligned}
\right.,
\end{equation}
Figure \ref{fig:ReLU} shows the effect of ReLU on Patnaik-Pearson dimension.
A good approximation, at least for $N, d = O(1000)$, is
\begin{equation}
\label{relu_approx}
{\tfrac{1}{d}} \PatnaikPearson(\ReLU(X)) = a * ( {\tfrac{1}{d}} \PatnaikPearson(X))^b
\end{equation}
with $a = \exp(-0.05) = 0.891$, and $b = 0.69$.

\begin{figure}
    \centering
    \includegraphics[width=0.45\textwidth]{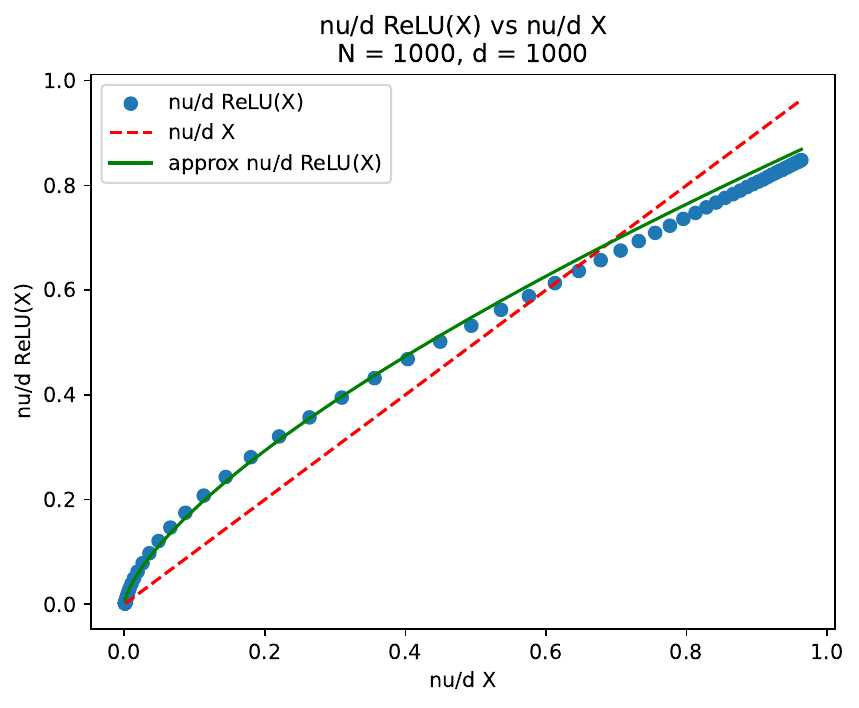} 
    \includegraphics[width=0.45\textwidth]{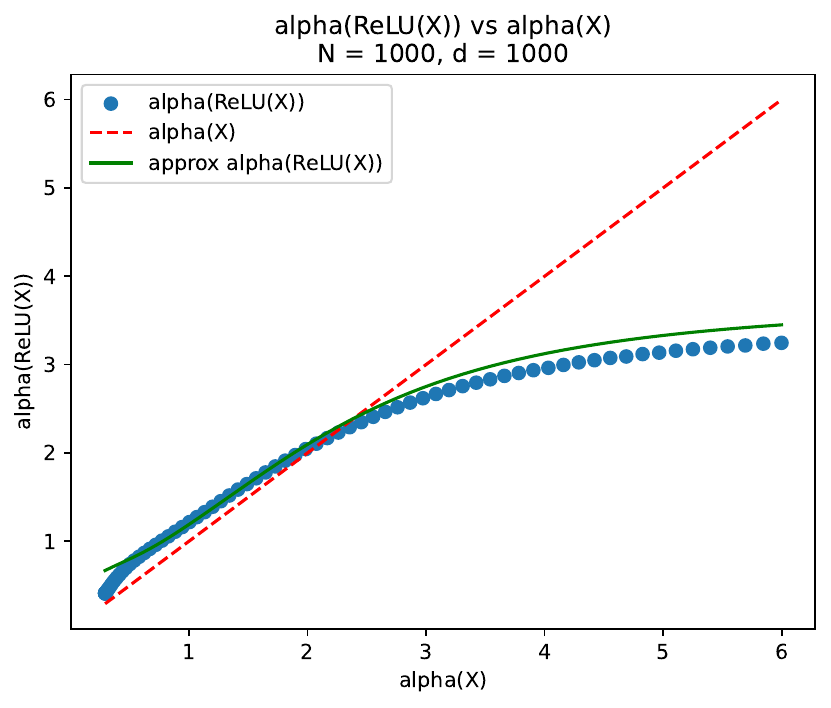} 
    \caption{The effect of ReLU on Patnaik-Pearson dimension. Approximation given by (\ref{relu_approx}).} 
    \label{fig:ReLU}
\end{figure}

\subsection{Addition, Interpolation and Concatenation}

For $X_1$ and $X_2$ both $N \times d$, define $X_1 + X_2$ as the usual matrix addition.
As shown in Figure \ref{fig:pp_dim_addition}, $\PatnaikPearson(X_1 + X_2)$ skews heavily towards 
$\min \{ \PatnaikPearson(X_1),\PatnaikPearson(X_2) \}$. 
As expected by Theorem \ref{fellers_convolution_lemma}, the heavier-tailed (smaller Patnaik-Pearson dimension) distribution dominates the lighter-tailed distribution.

\begin{figure}
    \centering
    \includegraphics[width=0.45\textwidth]{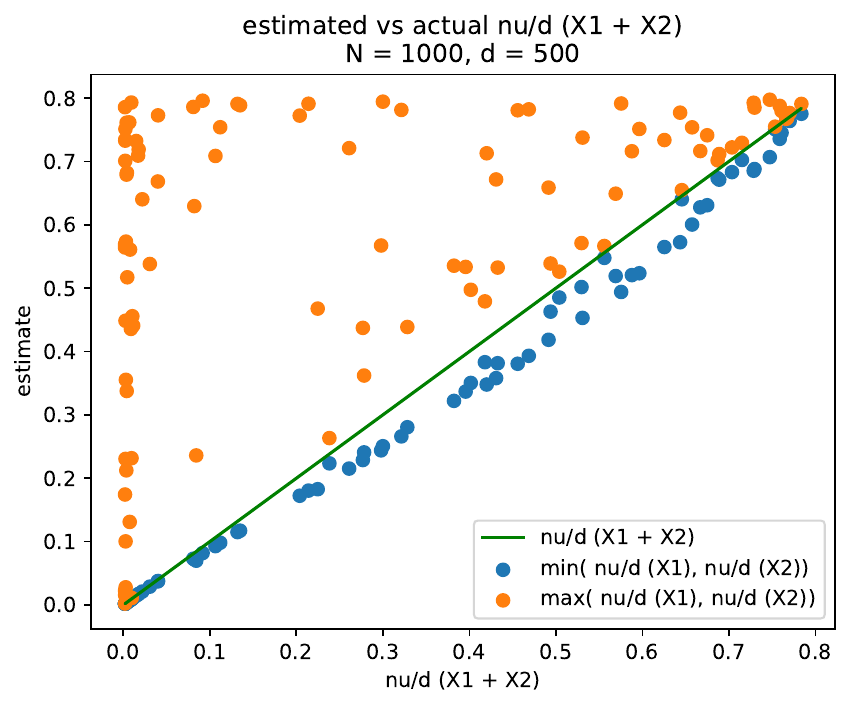} 
    \includegraphics[width=0.45\textwidth]{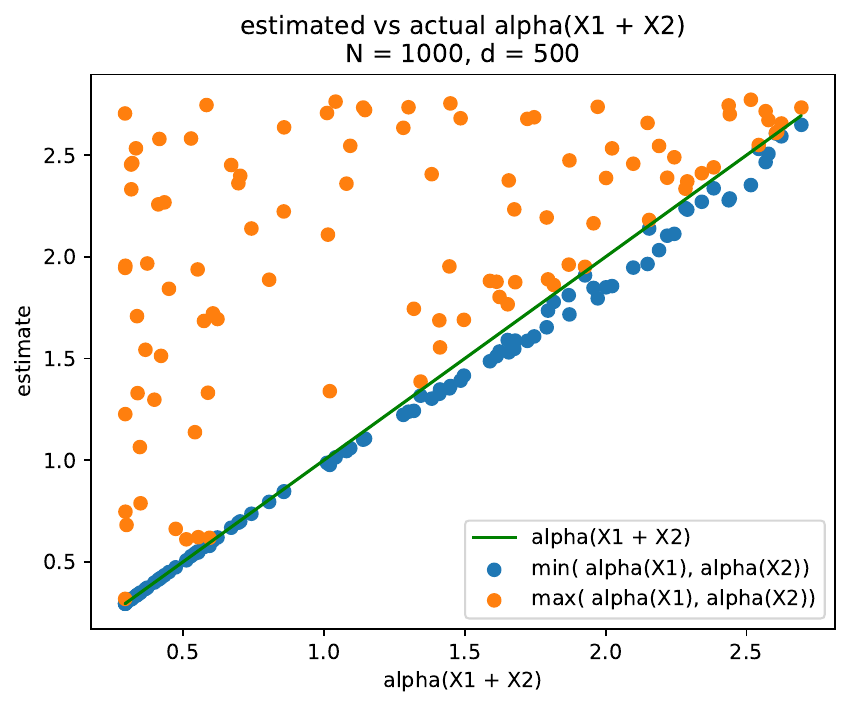} 
    \caption{Sum of two matrices $X_1 + X_2$ : heavy-tails dominate.} 
    \label{fig:pp_dim_addition}
\end{figure}

For $X_0$ and $X_1$ both $N \times d$, and $0 \leq t \leq 1$, define $X_t$ by
\begin{equation}
\label{defn_interpolation}
(X_t)_{ij} = (1 - t) * (X_0)_{ij} + t * (X_1)_{ij}
\end{equation}
As illustrated by the example in Figure \ref{fig:interpolation_alpha_Xt_alpha_X0_0806_alpha_X1_2275_as_t_varies.pdf}, 
if $\PatnaikPearson(X_0) \leq \PatnaikPearson(X_1)$, then $\PatnaikPearson(X_t)$ remains close to $\PatnaikPearson(X_0)$ until $t$ is close to 1, before a final rapid transition towards $\PatnaikPearson(X_1)$.
This is as expected by Theorem \ref{fellers_convolution_lemma}, in that the heavier-tailed distribution dominates the lighter-tailed distribution.

\begin{figure}
    \centering
    \includegraphics[width=0.45\textwidth]{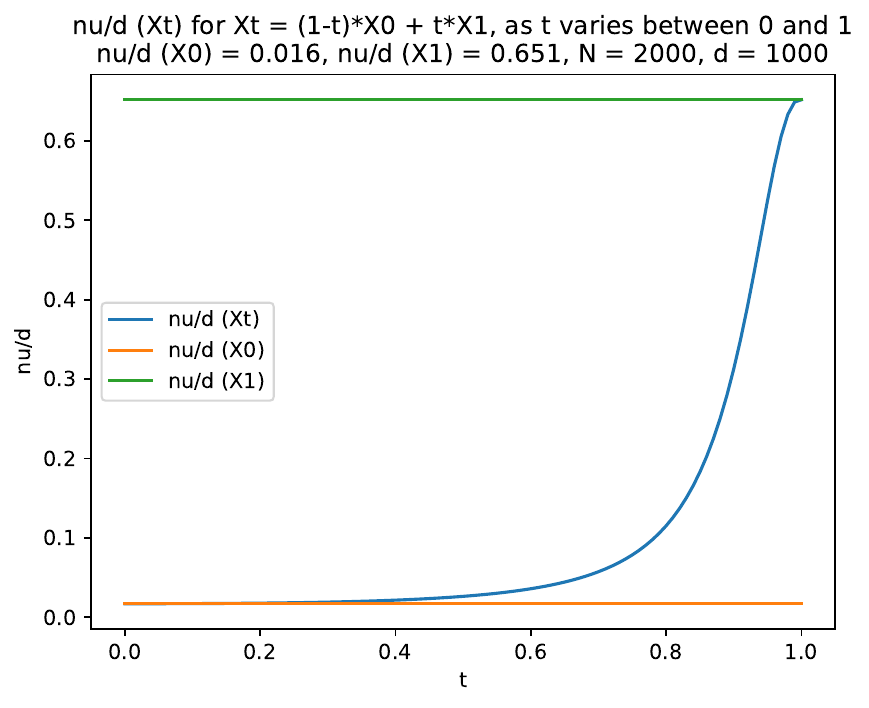} 
    \includegraphics[width=0.45\textwidth]{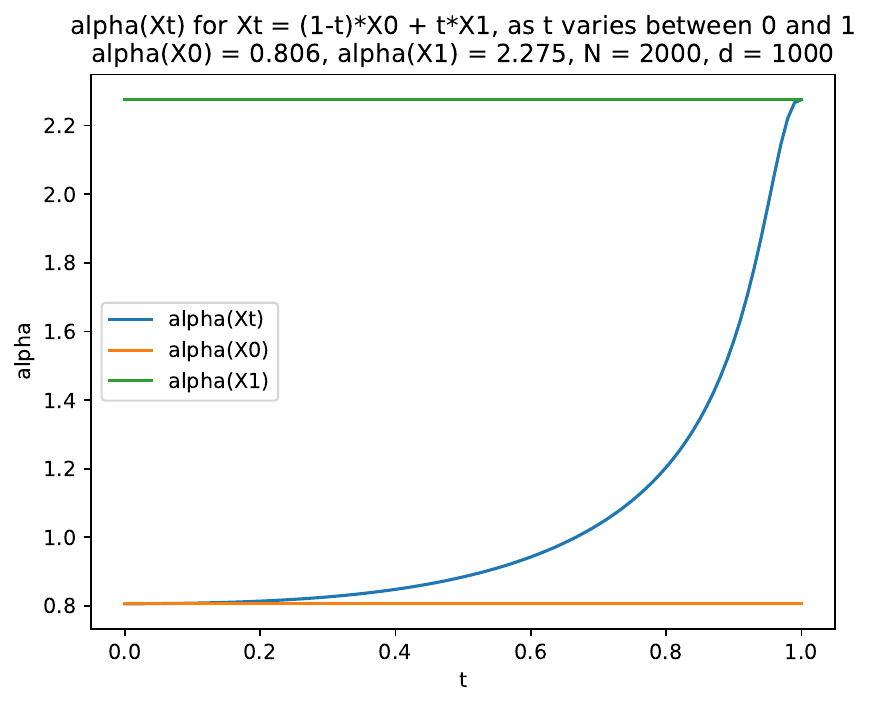} 
    \caption{Interpolation between two data manifolds $X_0$ and $X_1$. The heavier-tailed distribution dominates.} 
    \label{fig:interpolation_alpha_Xt_alpha_X0_0806_alpha_X1_2275_as_t_varies.pdf}
\end{figure}

Given $X_1$ and $X_2$ of dimensions $N_1 \times d$ and $N_2 \times d$ respectively, define the concatenation $X_1 \oplus X_2$ to be the $(N_1 + N_2) \times d$ matrix with entries
\begin{equation}
\label{defn_concat}
(X_1 \oplus X_2)_{ij} =  \left\{
\begin{aligned}
(X_1)_{ij} \, : \, i \leq N_1 \\
(X_2)_{(i - N_1) j} \, : \, N_1 < i \leq N_1 + N_2
\end{aligned}
\right.
\end{equation}
It is apparent from Figure \ref{fig:X1_concat_X2_cauchy.pdf} that 
\begin{equation}
\label{pp_dim_concat}
\PatnaikPearson( X_1 \oplus X_2 ) \approx \min \{ \PatnaikPearson( X_1), \PatnaikPearson( X_2 ) \}
\end{equation}
Again, the heavier-tailed distribution dominates.

\begin{figure}
    \centering
    \includegraphics[width=0.45\textwidth]{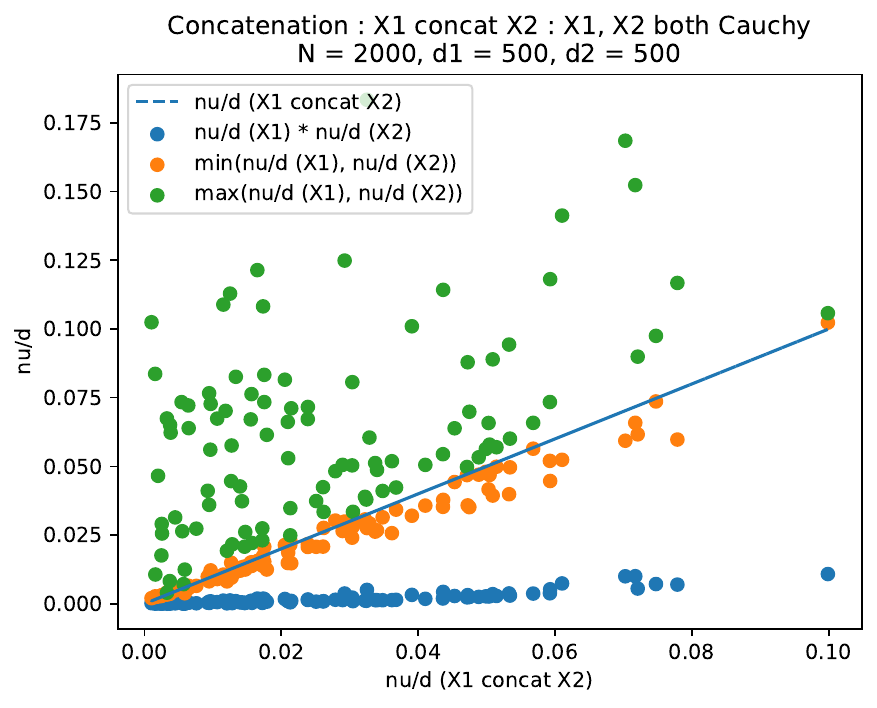}
    \includegraphics[width=0.45\textwidth]{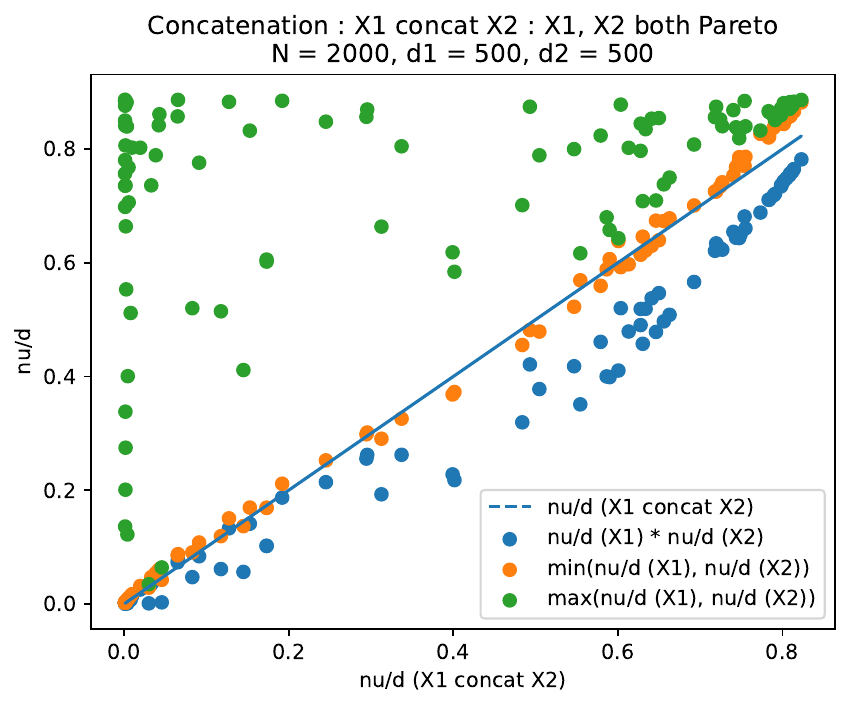}
    \caption{Concatenation : Patnaik-Pearson dimension of $X_1 \oplus X_2$, illustrating (\ref{pp_dim_concat}). Heavier tails dominate.} 
    \label{fig:X1_concat_X2_cauchy.pdf}
	\label{fig:X1_concat_X2_pareto.pdf}
	\label{fig:X1_concat_X2_uniform.pdf}
\end{figure}

\subsection{Normalisation}

For $X$ of shape $N \times d$, we first demean the $N$ row-vectors to all have row-sum 0, then normalise so that they all have norm 1. 
As shown in Figure \ref{fig:nu_over_d_Xnormalised_nu_over_d_X_pareto.pdf}, for $N, d = O(1000)$, normalisation slightly increases Patnaik-Pearson dimension, with the effect being most pronounced for small dimension.

\begin{figure}
    \centering
    \includegraphics[width=0.45\textwidth]{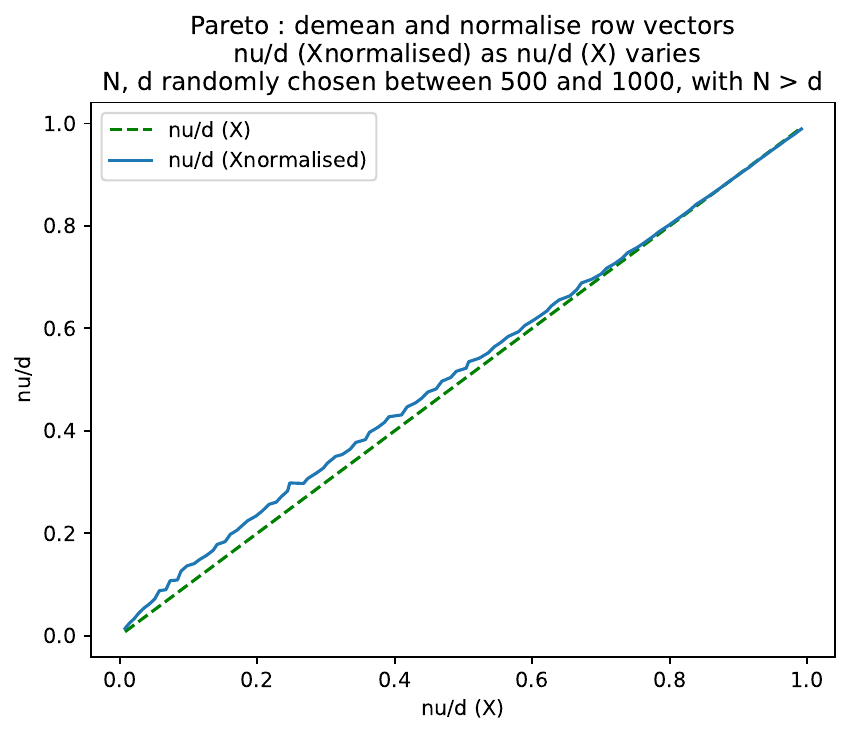}
    \includegraphics[width=0.45\textwidth]{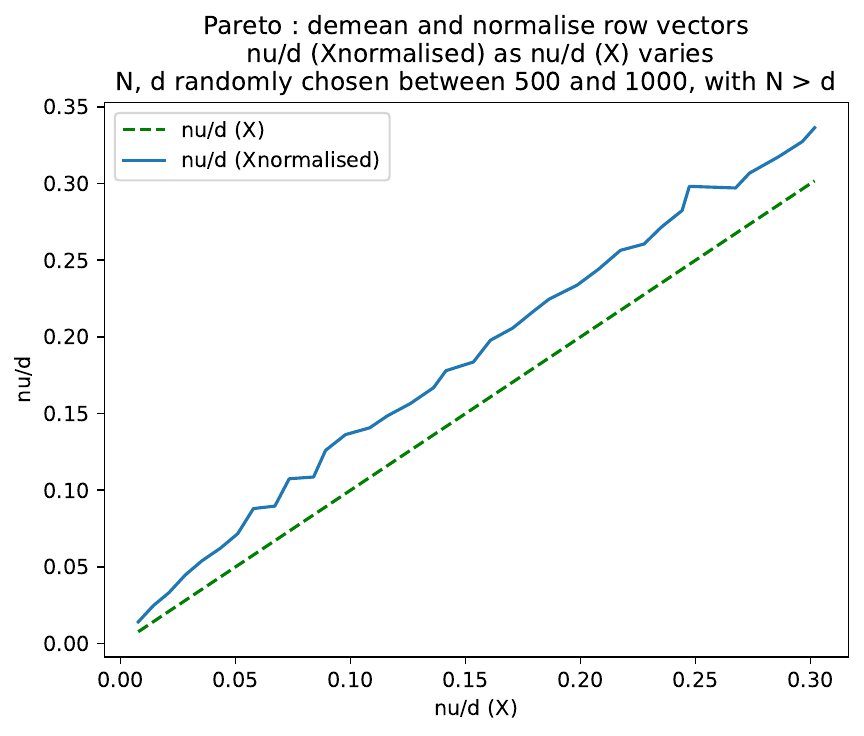}
    \caption{Normalisation : Patnaik-Pearson dimension for 
	$X_{\normalised}$ 
	as $\PatnaikPearson(X)$ varies.} 
    \label{fig:nu_over_d_Xnormalised_nu_over_d_X_pareto.pdf}
\end{figure}

\section{Application: BERT and DeepSeek}

\subsection{BERT embeddings}

The BERT model \cite{bert}, released in 2018, was the first open-weighted state of the art LLM.
We work with the BERT base model, which, at 110 million parameters, can no longer really be considered a ``large" Language Model. 
BERT is an encoder-only model, consisting of 12 layers, with a hidden size of 768 and context length of 512 tokens. 
The token vocabulary is 30,522, of which 29,528 tokens are valid, and 994 invalid.
Specifically, out of the first 999 tokens, five are special tokens - [PAD], [UNK], [CLS], [SEP] and [MASK] - which we keep, and the remaining 994 are [unusedN], which we seperate out as invalid. The remaining 29,523 tokens are considered to be valid.

We present results of three numerical experiments:

\begin{enumerate}
\item{{\bf Common components of token embeddings.}
First, for the 994 invalid embeddings, the norms average 1.161, with standard deviation 0.001. Averaging the embeddings gives a common component of norm 1.154, and residualising this out reduces the average norm of the residualised embeddings to 0.125, with standard deviation 0.003. This residualisation has very little effect on the Patnaik-Pearson dimension, both pre- and post-residualisation this is 561. 

For the 29,528 valid embeddings, the norms average 1.410, with standard deviation 0.191. The average component has norm 0.937, and after residualisation the average norm is 1.176, with standard deviation 0.192. 
}
\item{{\bf Patnaik-Pearson dimension of the collection of valid token embeddings.}
We calculate the Patnaik-Pearson dimension of various collections $X$ of token embeddings, realised as an $N \times d$ matrix. Here $d = 768$, but it is debatable what the most natural value of $N$ should be. 
The full data manifold of all 29,528 valid token embeddings is computationally inaccessible. 
Furthermore, the context length of the BERT-base model is 512, 
which suggests $N = 512$ is a natural value to consider, although this necessarily bounds $\PatnaikPearson(X) \leq 512$ for any such $X$. 
The approach we take here is to sample $N$ tokens without replacement from the full vocabulary of valid tokens, for $N$ taking various values between 100 and 10,000. Over this range, the mean (taken over multiple samples) of ${\tfrac{1}{d}} \PatnaikPearson(X)$ increases from 0.11 to 0.85. See Figure \ref{fig:pp_dim_for_sampled_BERT_embeddings.pdf}.

An alternative approach, which we have not pursued, but might be considered to be semantically more meaningful, would be to select samples of genuine text from different sources, tokenize these, and calculate the Patnaik-Pearson dimension of the resulting data manifolds.  

\begin{figure}
    \centering
    \includegraphics[width=0.45\textwidth]{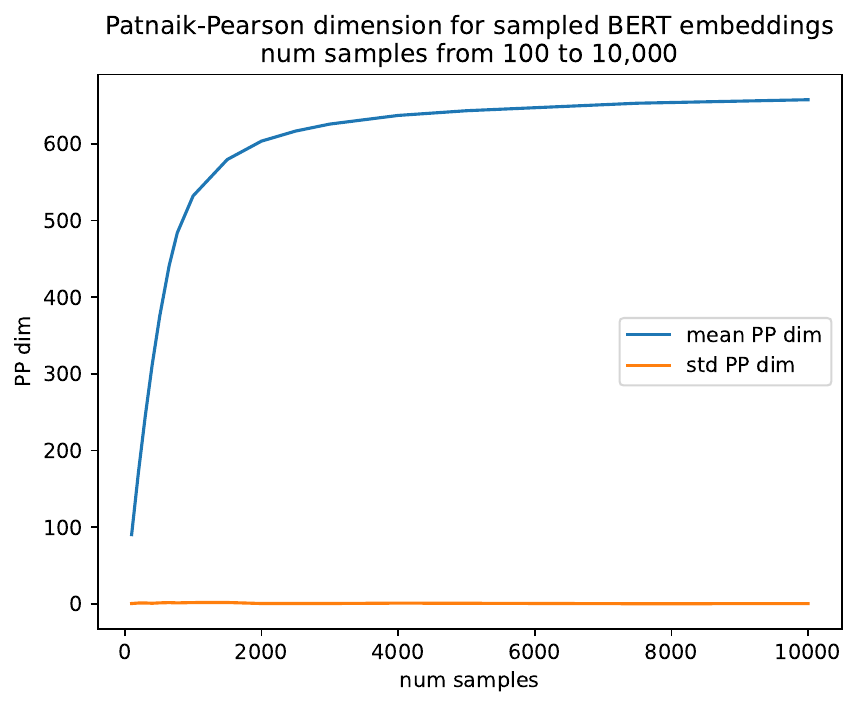}
    \includegraphics[width=0.45\textwidth]{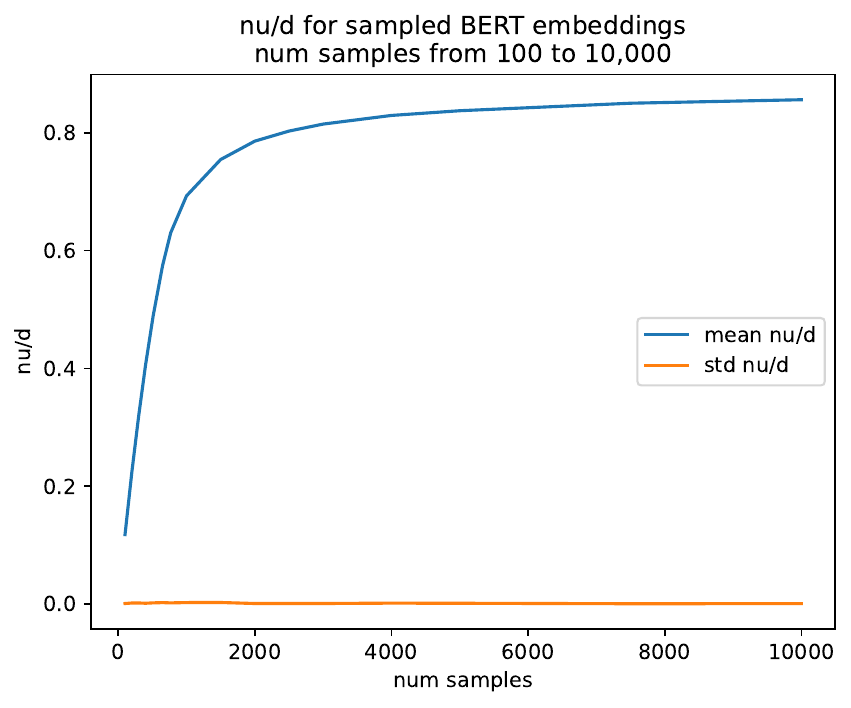}
    \caption{Patnaik-Pearson dimension and nu/d for sampled BERT token embeddings} 
    \label{fig:pp_dim_for_sampled_BERT_embeddings.pdf}
\end{figure}
}
\item{
{\bf Layerwise evolution of the Patnaik-Pearson dimension.}
We repeatedly sample (without replacement) 512 embeddings from the full set of token embeddings. 
We calculate the Patnaik-Pearson dimension of the resulting data manifold as it passes through the layers of the model. 
As shown in Figure \ref{fig:bert_base_layer_pp_dim.pdf}, dimension decreases steadily from an initial value of approximately 380 to approximately 120 at the final layer.  

\begin{figure}
    \centering
    \includegraphics[width=0.45\textwidth]{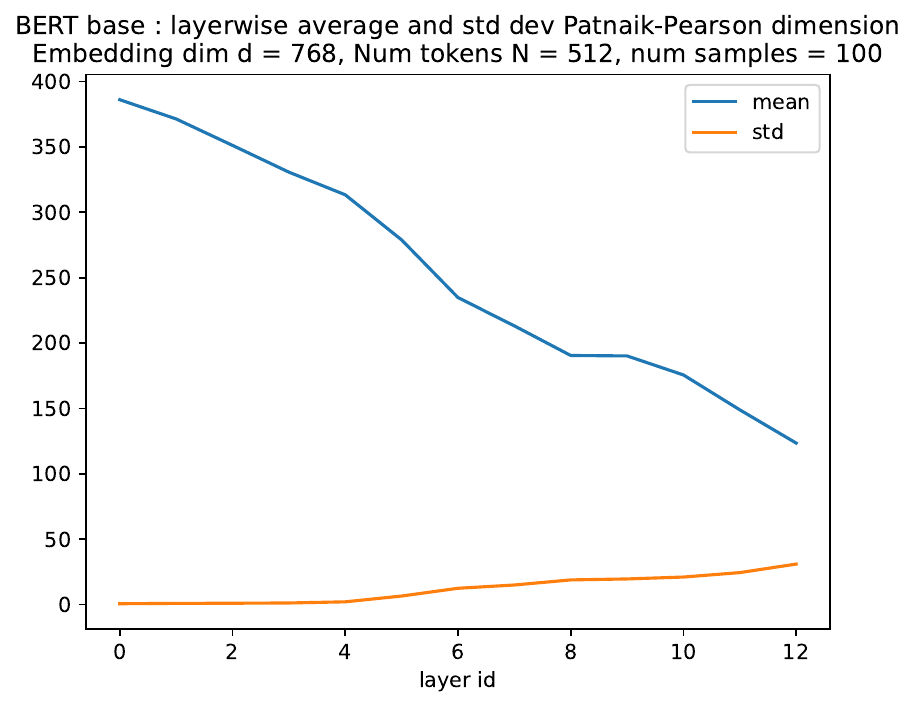}
    \includegraphics[width=0.45\textwidth]{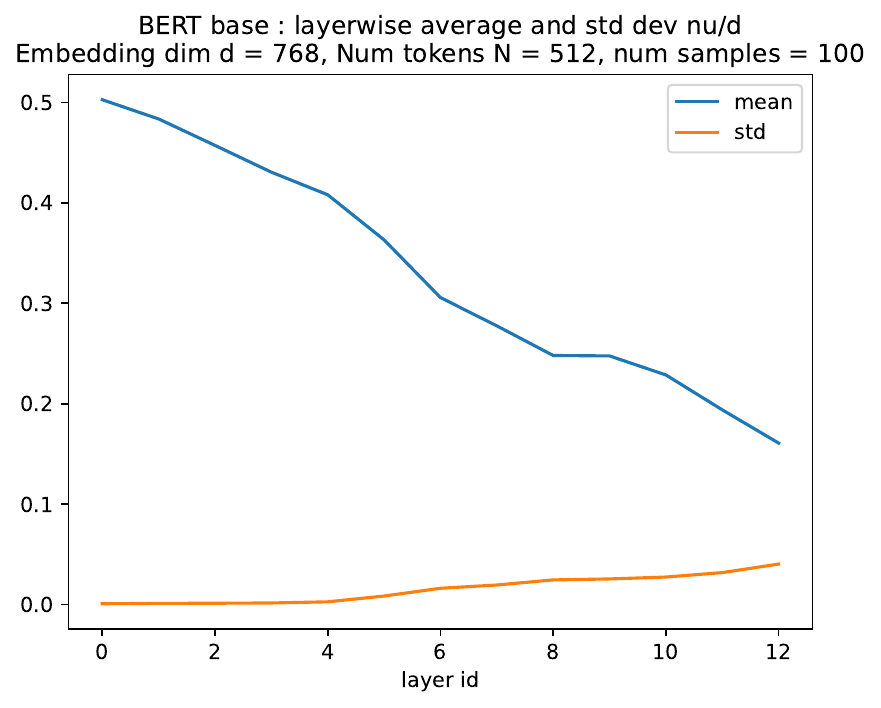}
    \caption{Layerwise evolution of Patnaik-Pearson dimension for BERT.} 
    \label{fig:bert_base_layer_pp_dim.pdf}
	\label{fig:bert_base_layer_nu_over_d.pdf}
\end{figure}

}

\end{enumerate}

\subsection{DeepSeek embeddings}

We perform the same experiments for a small version of the DeepSeek-R1 model \cite{deepseek}, namely DeepSeek-R1-Distill-Qwen-1.5B.
This is a 1.5B parameter generative (decoder-only) model. It has 28 layers, compared to BERT's 12, the hidden size is 1536, not 768, and the context length is roughly 130k, compared to 512 for BERT. The token vocabulary size is 151,643 tokens.
We perform two different analyses on this model.

\begin{enumerate}
\item{ {\bf Patnaik-Pearson dimension of the collection of token embeddings.}
Since the context length is comparable to the size of the vocabulary, and
dealing with a data manifold of dimension 151,643 $\times$ 1536 is computationally inaccessible to us, we repeatedly sample (without replacement) from the full set of embeddings, and calculate the Patnaik-Pearson dimension of this collection.  
Figure \ref{fig:deepseek_token_embedding_nu_over_d.pdf} shows the results of sampling collections of $N$ tokens (for $N$ ranging between 500 and 20,000) and calculating the average Patnaik-Pearson dimension across all sampled collections of size $N$.  We can see that the average value of ${\tfrac{\nu}{d}} = {\tfrac{1}{d}} \PatnaikPearson (X)$ increases from roughly 0.5 for $N = 1536$ to more than 0.9 for $N = 20,000$. We conjecture that as $N$ approaches the total vocabulary size, $\PatnaikPearson(X)$ approaches 1536, i.e. the token embeddings exploit the full dimensionality of the embedding space.

\begin{figure}
    \centering
    \includegraphics[width=0.45\textwidth]{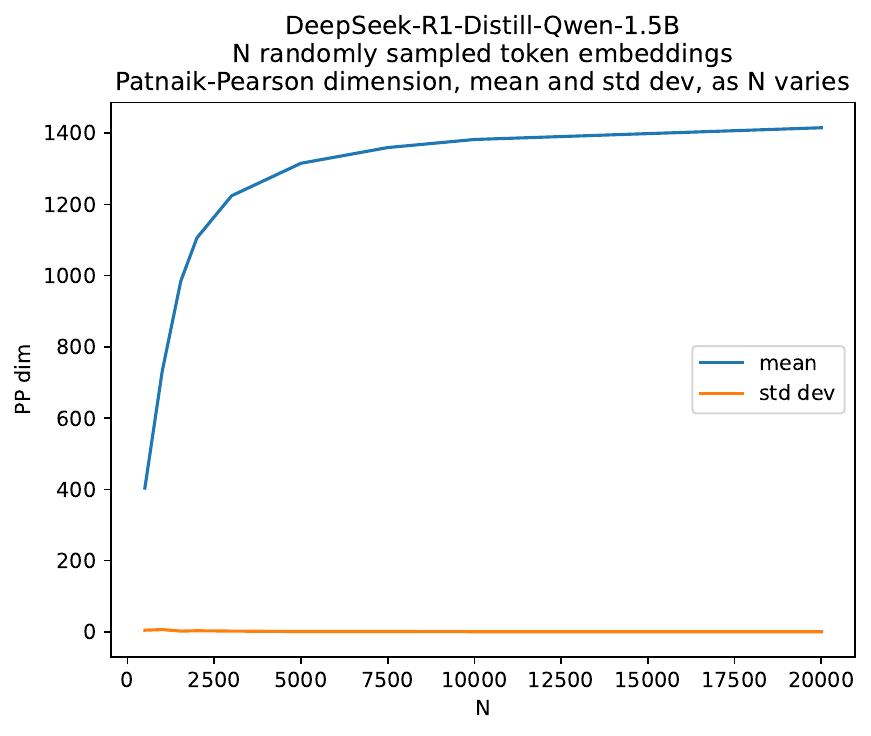}
    \includegraphics[width=0.45\textwidth]{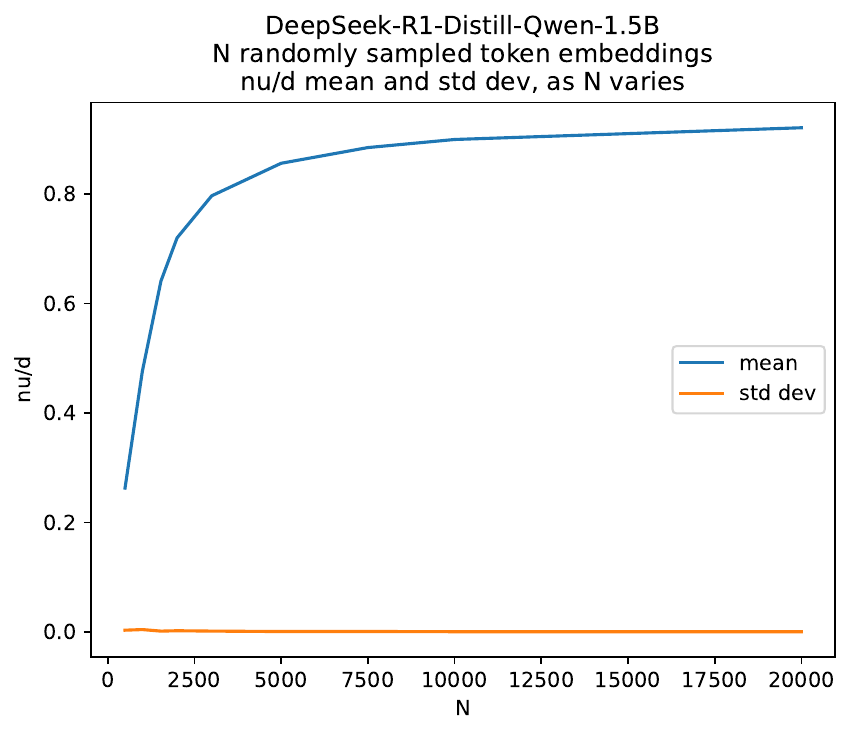}
    \caption{Deepseek token embeddings: Patnaik-Pearson dimension and nu/d for samples of token embeddings of size between 500 and 20,000.} 
    \label{fig:deepseek_token_embedding_nu_over_d.pdf}
\end{figure}
}
\item{{\bf Layerwise evolution of the Patnaik-Pearson dimension.}
We generate random samples of token embeddings (chosen without replacement), and the evolution of their Patnaik-Pearson dimension as they pass through the layers of the model. 
For the results shown in Figure \ref{fig:deepseek_layerwise_pp_dim.pdf} we generated 20 random samples of 2000 token embeddings each (so $N = 2000$, $d = 1536$). 
As shown, the initial Patnaik-Pearson dimension is on average around 1200, but is compressed dramatically to around 200 in the early layers of the model, and then remains consistently at that level until about layer 20, after which point it grows again to around 500. 
}
\end{enumerate}

\begin{figure}
    \centering
    \includegraphics[width=0.45\textwidth]{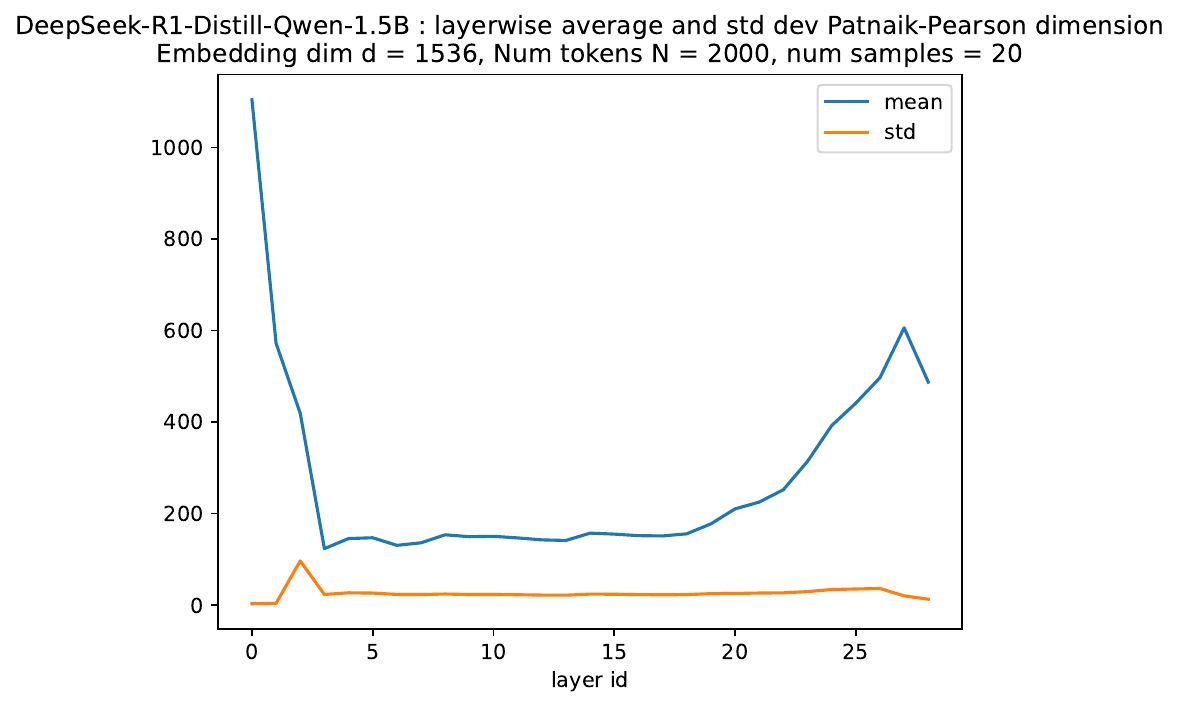}
    \includegraphics[width=0.45\textwidth]{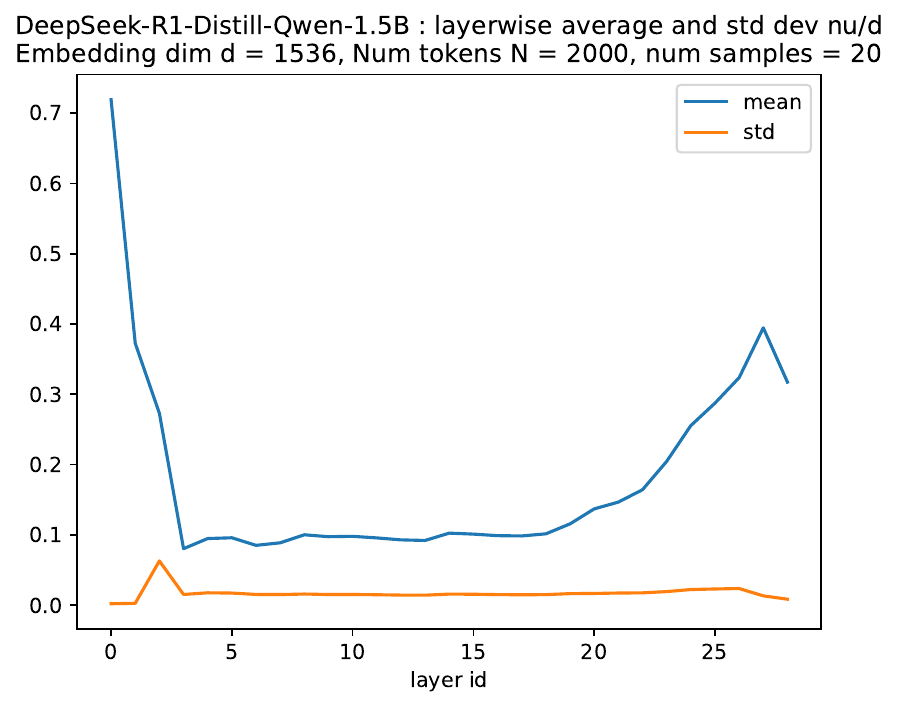}
    \caption{Layerwise evolution of the Patnaik-Pearson dimension of Deepseek embeddings.} 
    \label{fig:deepseek_layerwise_pp_dim.pdf}
	\label{fig:deepseek_layerwise_nu_over_d.pdf}
\end{figure}

\section{Conclusion}

In this work we have defined a novel measure of intrinsic dimension of a data manifold, which we call the Patnaik-Pearson dimension, due to its connection to Patnaik and Pearson's moment-matching formulae.
This was motivated by the application of the TwoNN intrinsic dimension estimator to a simple generative model of a data manifold. 
We investigate the properties of the Patnaik-Pearson dimension, and show that it has a close relation to the phenomena observed by Martin, Mahoney and Hinrichs for heavy-tailed weight matrices in neural networks, which motivated the development of their HTSR and SETOL theories. 
In particular, the critical values for the tail exponent for both the Patnaik-Pearson dimension and HTSR and SETOL coincide.
Using a combination of theoretical and numerical techniques, we study the behaviour of the Patnaik-Pearson dimension of a data manifold under the transformations typical to neural networks - multiplication by weight matrices; application of activation functions and softmax; addition, interpolation and concatenation; layer normalisation; attention. Some of these operations typically decrease Patnaik-Pearson dimension, others tend to increase it. 
It remains an open question as to in what generality these results hold. 
We apply this theory to two examples - the BERT-base and DeepSeek-R1-Distill-Qwen-1 models - and observe a pattern of Patnaik-Pearson dimension decreasing as the data manifold passes through the model layers.   

\section{Acknowledgements}

I am grateful to Anthony Coache, Eyal Neumann, Philipp Jettkant, Yifan Jiang, Alessandra Luati, Charles Martin, Antoine Jacquier, Jingjie Zhang and Dimitri Shlyakhtenko for very useful comments.

\section{Appendix : The TwoNN intrinsic dimension formula}
\label{appendix_twonn}

We derive the formulae given in \cite{twonn} which we use in Section \ref{subsection_twonn}, in particular (\ref{mu_pdf_cdf}).

Suppose we have a region in $\RR^d$, containing random points with (uniform) density $\rho$.
This means that, given an infinitesimal volume $\delta V$, the random variable $X(\delta V)$, defined to be the number of points occurring in the volume $\delta V$, 
satisfies 
$$\PP ( X(\delta V) = 0 ) = 1 - \rho \delta V, \quad \PP ( X(\delta V) = 1 ) = \rho \delta V, \quad \PP ( X(\delta V) = k ) = 0 \quad \forall \,\, k > 1$$
For a larger volume $V$, denoting by $X(V)$, defined to be the number of points occurring in $V$, then:\\

\begin{theorem}
For any $k \geq 0$, $\PP ( X(V) = k ) = {\frac{(\rho V)^k}{k!}}e^{-\rho V}$, and $\EE (X(V)) = \rho V$.
\end{theorem}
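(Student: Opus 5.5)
We derive the Poisson law for $X(V)$ from the infinitesimal description by the standard ``law of small numbers'' discretisation. Partition $V$ into $n$ disjoint cells $\delta V = V/n$. By the homogeneity assumption each cell independently contains one point with probability $p_n = \rho V/n$ and zero points otherwise; the probability of two or more points in a cell is $o(1/n)$. Independence across disjoint cells is implicit in the model of uniformly scattered points, and we state it explicitly as part of the setup. Then $X(V)$ is, up to an event of vanishing probability, a sum of $n$ independent Bernoulli$(p_n)$ variables, so
\[
\PP(X(V) = k) = \lim_{n \to \infty} \binom{n}{k} \left(\frac{\rho V}{n}\right)^k \left(1 - \frac{\rho V}{n}\right)^{n-k}.
\]

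The limit is evaluated term by term. For fixed $k$, $\binom{n}{k} n^{-k} \to 1/k!$. Also $(1 - \rho V/n)^{n} \to e^{-\rho V}$, and $(1-\rho V/n)^{-k} \to 1$. This gives $\frac{(\rho V)^k}{k!} e^{-\rho V}$.

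It remains to control the error from cells containing two or more points. The probability that any of the $n$ cells has at least two points is at most $n \cdot o(1/n) \to 0$, so the binomial approximation converges to the true law of $X(V)$.

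An alternative route, which I would mention as a cross-check, uses the differential equation. Set $P_k(V) = \PP(X(V)=k)$ and add a slab $\delta V$. Independence gives $P_k(V+\delta V) = P_k(V)(1-\rho\delta V) + P_{k-1}(V)\rho\delta V$, hence
\[
P_k' = -\rho P_k + \rho P_{k-1}, \qquad P_0(0) = 1.
\]
Solving inductively yields the same formula.

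For the expectation, sum the series directly:
\[
\EE(X(V)) = \sum_{k \ge 1} k \frac{(\rho V)^k}{k!} e^{-\rho V} = \rho V e^{-\rho V}\sum_{j\ge0}\frac{(\rho V)^j}{j!} = \rho V.
\]
Alternatively, linearity over the cells gives $n p_n = \rho V$ immediately.

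The only genuinely delicate step is justifying independence across disjoint cells and the $o(\delta V)$ bound on multiple occupancy, since the paper's description states these only informally. I would take both as the defining axioms of a homogeneous Poisson point process rather than attempt to derive them. Once they are assumed, the rest is routine.
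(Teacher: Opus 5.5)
Your proposal is correct and follows essentially the same route as the paper: split $V$ into $n$ equal cells, take the binomial-to-Poisson limit as $n \to \infty$, and sum the series to get $\EE(X(V)) = \rho V$. You go further than the paper by stating two assumptions explicitly — independence across disjoint cells and negligible multiple occupancy — which the paper uses only implicitly. Your ODE cross-check is not needed.
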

{\bf{Proof:}} Divide $V$ into $n$ equal volumes, for some large $n$. Then 
$$\PP(X({\tfrac{V}{n}}) = 0) = 1 - {\tfrac{\rho V}{n}}, \quad \PP(X({\tfrac{V}{n}}) = 1) = {\tfrac{\rho V}{n}}$$
Hence 
for any $k \geq 0$,
$$\PP(X(V) = k) = {\tfrac{n!}{k! \, (n-k)!}} ({\tfrac{\rho V}{n}})^k (1 - {\tfrac{\rho V}{n}})^{n-k} \quad \rto \quad {\tfrac{(\rho V)^k}{k!}}e^{-\rho V} \quad as \quad n \rto \infty$$
Therefore
$$\EE(X(V)) = \sum_{k=0}^\infty k \PP(X(V) = k) 
= \sum_{k=0}^\infty k {\tfrac{(\rho V)^k}{k!}}e^{-\rho V} 
= \sum_{k=1}^\infty {\tfrac{(\rho V)^k}{(k-1)!}}e^{-\rho V} 
= \rho V \sum_{k=0}^\infty {\tfrac{(\rho V)^k}{k!}}e^{-\rho V} 
= \rho V$$

Now suppose we have a collection $X$ of $N$ points $\{ \xsubone , \ldots , \xsubN \}$ in $\RR^d$, which we assume to lie on a submanifold of dimension $m$. We want to estimate $m$. 
For a given point $\xsubi$, consider the list of its nearest neighbors. 
Let $r_{i,1} \leq  r_{i,2} \leq \ldots $ be a sorted list of their distances from $\xsubi$.
Thus $r_{i,1}$ is the distance from $\xsubi$ to its nearest neighbour, $r_{i,2}$ is the distance to the second-nearest neighbour, and so on.
By convention we also define $r_0 = 0$.
The volume of the $m$-dimensional hyperspherical shell enclosed between two successive 
neighbors $l-1$ 
and $l$ is given by
${\Delta V}_l = \omega_m ( r_l^m - r_{l-1}^m)$
where $\omega_m$ is the volume of the $m$-sphere of radius 1. 
Explicitly, 
$\omega_m = \frac{\pi^{m/2}}{\Gamma({\frac{m}{2}}+1)}$.
By the theorem above, 
$$\PP({\Delta V}_l \in [v, v + \delta v]) = \PP( {{\Delta V}_l} \geq v 
\, \cap \,
{{\Delta V}_l} \leq v + dv )
= \PP( X( B_{v + \delta v}) = 1 | X( B_v) = 0) \PP(X( B_v) = 0)$$
$$= \PP( X( B_{\delta v}) = 1) \PP(X( B_v) = 0) 
= (\rho \delta v)  (e^{-\rho v})  = \rho  e^{-\rho v} \delta v$$
where $B_v$ is a ball of volume $v$.

Now consider the two shells ${\Delta V}_1$ and ${\Delta V}_2$. 
Define $R = \frac{{\Delta V}_2}{{\Delta V}_1}$
Then 
$$
\PP (R \in [ \Rbar, \Rbar + \delta \Rbar])
= {\int_0^\infty} d v_1 {\int_0^\infty} d v_2 \rho^2 e^{- \rho(v_1 + v_2)} {\bf{1}}_{\{ {\frac{v_2}{v_1}} \in [ \Rbar, \Rbar + \delta \Rbar] \}}
= {\int_0^\infty} d v_1 {\int_{\Rbar v_1}^{\Rbar v_1 + \delta \Rbar v_1 }} d v_2 \rho^2 e^{- \rho(v_1 + v_2)}
$$
$$ 
= {\int_0^\infty} d v_1 
\rho^2 e^{- \rho v_1} 
{\int_{\Rbar v_1}^{\Rbar v_1 + \delta \Rbar v_1 }} 
d v_2  e^{- \rho v_2}
= {\int_0^\infty} d v_1 \rho^2 e^{- \rho v_1} {\tfrac{1}{\rho}} e^{\rho \Rbar v_1} [ e^{- \rho \delta \Rbar  v_1} - 1]
$$
$$
= {\int_0^\infty} d v_1 \rho e^{- \rho (1 + \Rbar) v_1}  \rho \delta \Rbar v_1
= \rho^2 \delta \Rbar {\int_0^\infty} d v_1 \, v_1 e^{- \rho (1 + \Rbar) v_1} 
$$\\
Now, using the fact that $\int_0^\infty dx \, x e^{\lambda x} = {\frac{1}{\lambda^2}}$, (provided $\lambda < 0$) this becomes
$$
\PP (R \in [ \Rbar, \Rbar + \delta \Rbar]) 
= {\frac{\delta \Rbar}{(1 + \Rbar)^2}}
$$
Hence the pdf for $R$ is given by $f_R(r) = {\frac{1}{(1 + r)^2}}$.
To estimate $m$, define $\mu = {\frac{r_2}{r_1}} \in [1, \infty)$. Then
$$
R = {\frac{\Delta v_2}{\Delta v_1}} 
= {\frac{ \omega_m (r_2^m - r_1^m)}{\omega_m (r_1^m - r_0^m)}} 
= \mu^m - 1
$$
Therefore, the pdf of $\mu$, for $\mu \geq 1$, is given by
$$
f_{\mu}(t) = {\frac{1}{\delta t}} \PP ( \mu \in [t, t + \delta t]) 
= {\frac{1}{\delta t}} \PP ( 1 + R \in [ t^m , (t + \delta t)^m ])
$$
$$
= {\frac{1}{\delta t}} \PP ( 1 + R \in [ t^m , t^m  + m \delta t \, t^{m-1} ])
= {\frac{1}{\delta t}} {\frac{1}{t^{2m}}} m t^{m-1} \delta t
= m t^{-(m+1)}
$$
Hence (see \cite{twonn}, Equation (5)), 
\begin{equation}
\label{pdf_mu}
f_{\mu} (t) = m t^{-(m+1)} {\bf{1}}_{[1,\infty)}
\end{equation} 
and the CDF is given by
\begin{equation}
\label{cdf_mu}
F_{\mu}(x) = (1 - x^{-m}) {\bf{1}}_{[1,\infty)}
\end{equation}
This establishes (\ref{mu_pdf_cdf}). 

\end{document}